\numberwithin{equation}{section}
\def\red{\textcolor{red}}
\theoremstyle{plain}
\newtheorem{Th}{Theorem}[section]
\newtheorem{Lemma}[Th]{Lemma}
\newtheorem{Cor}[Th]{Corollary}
\newtheorem{Pro}[Th]{Proposition}
\theoremstyle{definition}
\newtheorem{Def}[Th]{Definition}
\newtheorem{Exa}[Th]{Example}
\newtheorem{Conj}[Th]{Conjecture}
\newtheorem{Rem}[Th]{Remark}
\newtheorem{?}[Th]{Problem}
\newcommand{\Hom}{{\rm{Hom}}}
\newcommand{\End}{{\rm{End}}}
\newcommand{\diam}{{\rm{diam}}}
\begin{document}

\title{Graph homomorphisms between trees}

\author[P. Csikv\'ari]{P\'{e}ter Csikv\'{a}ri}

\address[P\'{e}ter Csikv\'{a}ri]{E\"{o}tv\"{o}s Lor\'{a}nd University
  \\ Department of Computer  Science \\ H-1117 Budapest
\\ P\'{a}zm\'{a}ny P\'{e}ter s\'{e}t\'{a}ny 1/C \\ Hungary \& Alfr\'ed 
R\'enyi Institute of Mathematics \\ H-1053 Budapest \\ Re\'altanoda
u. 13-15. \\ Hungary} 

\email{csiki@cs.elte.hu}

\author[Z. Lin]{Zhicong Lin}
\address[Zhicong Lin]{
Department of Mathematics and Statistics, Lanzhou University, China \&
Institut Camille Jordan, UMR 5208 du CNRS, Universit\'{e} de Lyon,
Universit\'{e} Lyon 1, France} 
\email{lin@math.univ-lyon1.fr}

\thanks{The first author  is partially supported by the
Hungarian National Foundation for Scientific Research (OTKA), grant
no. K81310 and by grant no.\ CNK 77780 from the National Development
Agency of Hungary, based on a source from the Research and Technology
Innovation Fund. He is also partially supported by MTA R\'enyi
"Lend\"ulet" Groups and Graphs Research Group.}
\thanks{
The second author is supported by the China Scholarship Council (CSC) for
studying abroad, CSC no. 2010618092.}

\begin{abstract}
In this paper we study several problems concerning the number of
homomorphisms of trees. We give an algorithm for the number of homomorphisms
from a tree to any graph by the Transfer-matrix method. By using this
algorithm and some transformations on trees, we study various extremal
problems about the number of homomorphisms of trees.  These applications
include a far reaching generalization of Bollob\'as and Tyomkyn's result
concerning the number of walks in trees. 
 
Some other highlights of the paper are the following. Denote by $\hom(H,G)$
the number of homomorphisms from a graph $H$ to a graph $G$.  

For any tree $T_m$ on $m$ vertices we give a general lower bound for
$\hom(T_m,G)$ by certain entropies of Markov chains defined on the graph
$G$. As a particular case, we show that for any graph $G$, 
$$\exp(H_{\lambda}(G))\lambda^{m-1}\leq\hom(T_m,G),$$ 
where $\lambda$ is the largest eigenvalue of the adjacency matrix of $G$ and
$H_{\lambda}(G)$ is a certain constant depending only on $G$ which we call the
spectral entropy of $G$. In the particular case when $G$ is the path $P_n$ on
$n$ vertices, we prove that 
$$\hom(P_m,P_n)\leq \hom(T_m,P_n)\leq \hom(S_m,P_n),$$
where $T_m$ is any tree on $m$ vertices, and $P_m$ and $S_m$ denote the path
and star on $m$ vertices, respectively.

We also show that if $T_m$ is any fixed tree and
$$\hom(T_m,P_n)>\hom(T_m,T_n),$$
for some tree $T_n$ on $n$ vertices, then $T_n$ must be the tree obtained from a path $P_{n-1}$ by attaching a pendant vertex to the second vertex of
$P_{n-1}$. In fact, we conjecture that if 
$n\geq 5$ and $T_n$ is an arbitrary tree on $n$ vertices, then
$$\hom(T_m,P_n)\leq \hom(T_m,T_n)$$
for any tree $T_m$.

All the results together enable us to show that 
$$
|\End(P_m)|\leq|\End(T_m)|\leq|\End(S_m)|,
$$
where $\End(T_m)$ is the set of all endomorphisms of $T_m$ (homomorphisms from
$T_m$ to itself). 
\end{abstract}
\keywords{trees; walks; graph homomorphisms;  adjacency
  matrix; extremal problems; KC-transformation; Markov chains} 

\maketitle

\tableofcontents

\section{Introduction}

We use standard notations and terminology of graph theory, see for instance
\cite{bb,bm}.  The graphs considered here are finite and undirected 
without multiple edges and loops. Given a graph $G$, we write $V(G)$ for the
vertex set and $E(G)$ for the edge set. A \emph{homomorphism} from a graph $H$
to a graph $G$ is a mapping $f:V(H)\to V(G)$  
such that the images of adjacent vertices are adjacent. Denote by $\Hom(H,G)$
the set of homomorphisms from $H$ to $G$ and by $\hom(H,G)$ the number of
homomorphisms from $H$ to $G$.  Throughout this article, we write $P_{n}$ and
$S_{n}$ for the path and the star on $n$ vertices, respectively. The length of
a path is the number of its edges. The {\em union of  graphs} $G$ and $H$ is
the graph $G\cup H$ with vertex set  $V (G)\cup V (H)$ and edge set $E(G)\cup
E(H)$. A tree $T$ together with a root vertex $v$ will be denoted by $T(v)$.

The problem of computing $\hom(H,G)$ is difficult in general. However, there
has been recent interest in counting homomorphisms between special
graphs. In particular, formulas for computing the number of homomorphisms
between two different paths were given in \cite{aw, lz}. But even for these
special trees, the formulas are bulky and inelegant. In
Section~\ref{treewalk}, by using the Transfer-matrix method, we shall give an
algorithm for computing the number of homomorphisms from trees to any
graph. This algorithm will be called {\em Tree-walk algorithm}.

Recently, the first author proved a conjecture of Nikiforov concerning the
number of closed walks on trees. He proved in \cite{pc} that, for a fixed
integer $m$, the number of closed walks of length $m$ on trees of order
$n$ attains its maximum at the star $S_n$ and its minimum at the path $P_n$. 
In other words,
\begin{equation}\label{cycle}
\hom(C_{m},P_n)\leq \hom(C_{m},T_n)\leq \hom(C_{m},S_n),
\end{equation}
where $T_n$ is a tree on $n$ vertices and $C_{m}$ is the cycle on $m$
vertices.  

Bollob\'as and Tyomkyn \cite{bt} gave a variant of the first author's result
by replacing the number of  closed walks by  the number of all walks, that is 
\begin{equation}\label{path}
\hom(P_{m},P_n)\leq \hom(P_{m},T_n)\leq \hom(P_{m},S_n),
\end{equation}
where $T_n$ is a tree on $n$ vertices.
In both \cite{bt} and \cite{pc}, the authors use a certain transformation of
trees. In \cite{pc}, it is called the \emph{generalized tree shift}, whereas
in \cite{bt}, it is renamed to \emph{KC-transformation}.

\begin{figure}[h!] 
\begin{center}
\scalebox{.65}{\includegraphics{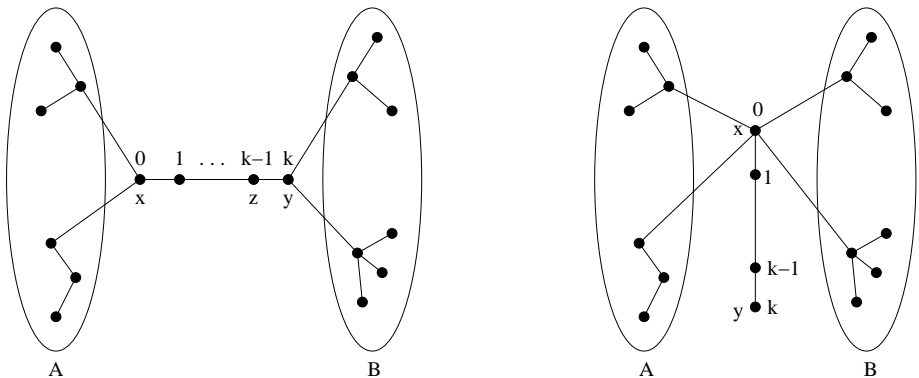}}  
\end{center}
\caption{\label{KC} The KC-transformation.} 
\end{figure}

To define this transformation, let $x$ and $y$ be two vertices of a tree $T$
such that every interior vertex of the unique $x$--$y$ path $P$ in $T$ has
degree two, and write $z$ for the neighbor of $y$ on this path. Denote by
$N(v)$ the set of neighbors of a vertex $v$. The KC-transformation,
$KC(T,x,y)$, of the tree $T$ with respect to the path $P$ is obtained from $T$
by deleting all edges between $y$ and $N(y)\setminus z$ and adding the edges
between $x$ and $N(y)\setminus z$ instead (See Fig.~\ref{KC}). Note that
$KC(T,x,y)$ and $KC(T,y,x)$ are isomorphic.

The following property of
KC-transformation was proved in~\cite{pc}. 

\begin{Pro}\label{pr1}
The KC-transformation gives rise to a graded poset of trees on $n$ vertices
with the star as the largest and the path as the smallest element. See
Figure~\ref{poset}. 
\end{Pro}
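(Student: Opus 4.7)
The plan is to define a partial order $\le$ on isomorphism classes of $n$-vertex trees by declaring $T \le T'$ when $T'$ is obtainable from $T$ by a sequence of KC-transformations, and then to exhibit a strictly monotone rank function that changes by exactly one under every genuine step. The natural candidate is the leaf count $\ell(T)$. In a KC with respect to an $x$-$y$ path, $y$ is left with only its path-neighbor $z$ and so becomes a leaf, every $u \in N(y)\setminus\{z\}$ keeps its degree (only its attachment shifts from $y$ to $x$), and $x$ merely gains neighbors. A short case analysis gives
\[
\ell\bigl(KC(T,x,y)\bigr) - \ell(T) \;=\; \begin{cases} 1, & \text{if } \deg_T(x),\deg_T(y) \ge 2, \\ 0, & \text{otherwise}. \end{cases}
\]
Moreover, if $\deg_T(y)=1$ then no edges move at all, and if $\deg_T(x)=1$ then $x$ and $y$ simply swap roles along the path, so the resulting unlabeled tree is isomorphic to $T$. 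Consequently the KC-moves producing a genuinely new isomorphism class are exactly those with $\deg_T(x),\deg_T(y)\ge 2$, and each such move increases $\ell$ by one. This yields antisymmetry of $\le$ and identifies $r(T):=\ell(T)-2$ as the candidate rank, with $r(P_n)=0$ and $r(S_n)=n-3$.

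To upgrade this to gradedness with $P_n$ and $S_n$ as the unique extrema, two further claims must be checked: (a) every non-star tree admits a nontrivial KC, and (b) every non-path tree is the image of one. For (a), note that $T\ne S_n$ has at least two vertices of degree $\ge 2$ (else the single such vertex would be adjacent to all others); picking such a pair $(x,y)$ minimizing $d_T(x,y)$, by minimality the interior of the $x$-$y$ path has degree $2$, so the KC is valid. For (b), pick any leaf $y$ of $T$ and walk from its neighbor along degree-$2$ vertices until reaching a vertex $x$ of degree $\ge 3$, which must exist because $T$ is not a path; detaching any single branch of $x$ on the side away from $y$ and reattaching it to $y$ produces a tree $T_0$ of strictly smaller rank satisfying $KC(T_0,x,y)=T$.

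Since $P_n$ is the unique tree with exactly $2$ leaves and $S_n$ is the unique tree with $n-1$ leaves, (a), (b), and the rank identity together force every maximal chain from $P_n$ to $S_n$ to have length $n-3$, so the poset is graded with the claimed extrema. The main obstacle is pinning down the leaf-count identity and carrying out the boundary analysis of (a) and (b) cleanly, in particular separating the cases where the chosen $x$ and $y$ are adjacent from those at distance $\ge 2$; once these are in hand, all remaining verifications reduce to routine combinatorial bookkeeping around the single identity $\ell(KC(T,x,y))=\ell(T)+1$.
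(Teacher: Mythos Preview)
Your argument is correct and complete in all essentials. The paper itself does not supply a proof of this proposition; it simply cites the original reference \cite{pc}, where the result first appeared. Your approach---using the leaf count $\ell(T)$ as rank function, verifying that each nontrivial KC-step increases $\ell$ by exactly one, and checking that non-stars admit an upward move while non-paths admit a downward one---is precisely the argument given there.

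A couple of minor comments on the writeup. In part (a), your minimality argument actually forces $x$ and $y$ to be \emph{adjacent}: any interior vertex of the $x$--$y$ path already has degree $\ge 2$ and would form a closer pair with $x$. So the interior is empty and the degree-$2$ condition holds vacuously; your phrasing ``by minimality the interior \ldots\ has degree $2$'' is correct but slightly obscures this. In part (b), it is worth stating explicitly that after detaching one branch, $\deg_{T_0}(x)=\deg_T(x)-1\ge 2$ and $\deg_{T_0}(y)=2$, so the inverse move is indeed a nontrivial KC and the rank drops by one. With those two clarifications the ``boundary analysis'' you flag as the main obstacle is already finished, and no separate treatment of the adjacent versus distance-$\ge 2$ cases is needed.
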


\begin{figure}[h!] 
\begin{center}
\scalebox{.65}{\includegraphics{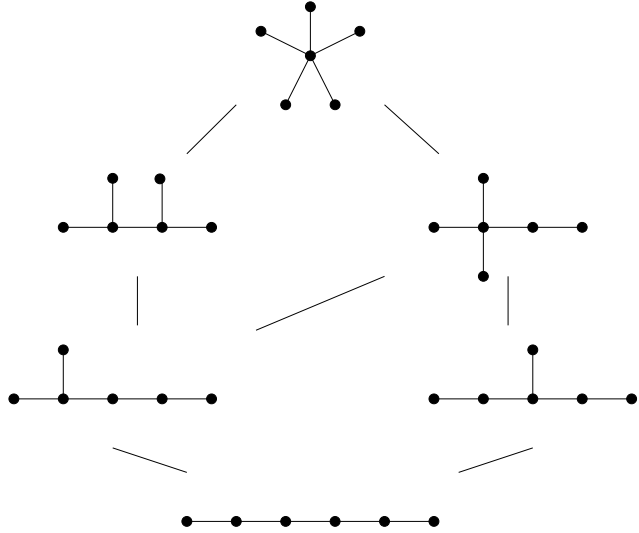}}\caption{The induced poset of
  KC-transformation on trees of $6$ vertices. \label{poset}}     
\end{center}
\end{figure}

In \cite{pc} the first author proved that the KC-transformation increases the
number of closed walks of fixed length in trees. By Proposition~\ref{pr1},
this leads to the proof of inequality ~\eqref{cycle}.

In the very same spirit,  Bollob\'as and Tyomkyn~\cite{bt} showed that the
KC-transformation increases the number of walks of fixed length in trees. In
the language of graph homomorphism, their result can be
restated as follows.  

\begin{Th} [Bollob\'as-Tyomkyn] \label{th:1}
Let $T$ be a tree and let $T'$ be obtained from $T$  by a
KC-transformation. Then  
\begin{equation} \label{eq:1}
\hom(P_m,T')\geq\hom(P_m,T)
\end{equation} 
for any $m\geq 1$.
\end{Th}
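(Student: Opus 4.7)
My plan is to reduce the theorem to a "single-branch move" version, then to expand the walk-generating functions of $T$ and $T'$ via a transfer-matrix decomposition, and finally to isolate two structural inequalities about walks in the "upper'' tree. I would first prove a slightly more general statement: if $T$ has two vertices $x,y$ joined by a path of length $\ell$ with interior vertices of degree $2$, and $T^*$ is obtained from $T$ by replacing a single edge $ys$ (with $s\in N_T(y)\setminus\{z\}$) by $xs$, then $\hom(P_m,T^*)\geq\hom(P_m,T)$ for every $m\geq 1$. Iterating this inequality over the branches $s_1,\dots,s_r\in N_T(y)\setminus\{z\}$ recovers Theorem~\ref{th:1}, because the interior of the $x$--$y$ path remains of degree $2$ after each single-branch step. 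So it suffices to treat the single-branch case.

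In the single-branch setting, let $Q$ be the branch containing $s$ and write $V(T)=V(U)\cup V(Q)$, where $U$ consists of the path $x=p_0,\dots,p_\ell=y$ together with the subtree $A$ hanging off $x$. The only edge joining $U$ and $Q$ is $ys$ in $T$ and $xs$ in $T^*$. Writing $U_{vw}(t)$ and $Q_{vw}(t)$ for the walk generating functions in $U$ and $Q$, and $\Phi_U(v,t)=\sum_{w\in V(U)}U_{vw}(t)$, $\Phi_Q(v,t)=\sum_{w\in V(Q)}Q_{vw}(t)$, I would classify every walk in $T$ by its alternation between $U$- and $Q$-segments and sum over the number of crossings of the cut edge. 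A direct transfer-matrix computation yields
\[
F_T(t):=\sum_{m\geq1}\hom(P_m,T)\,t^{m-1}=C(t)+\frac{N_y(t)}{D_y(t)},
\]
where $C(t)=\sum_{v,w\in V(U)}U_{vw}(t)+\sum_{v,w\in V(Q)}Q_{vw}(t)$ depends only on $U$ and $Q$, and
\[
N_y(t)=t^2Q_{ss}(t)\Phi_U(y,t)^2+2t\Phi_U(y,t)\Phi_Q(s,t)+t^2U_{yy}(t)\Phi_Q(s,t)^2,\quad D_y(t)=1-t^2U_{yy}(t)Q_{ss}(t).
\]
The formula for $F_{T^*}(t)$ is obtained from this one by replacing $y$ with $x$ throughout $N_y,D_y$. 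Subtracting and using the algebraic identity $\tfrac{N_x}{D_x}-\tfrac{N_y}{D_y}=\tfrac{N_x-N_y}{D_x}+\tfrac{N_y(D_y-D_x)}{D_xD_y}$ (both $1/D_x,1/D_y$ have nonnegative coefficients since $1-D_x,1-D_y$ have no constant term), the coefficient-wise inequality $F_{T^*}(t)\geq F_T(t)$ reduces to the two key inequalities
\[
\Phi_U(x,t)\geq\Phi_U(y,t)\quad\text{and}\quad U_{xx}(t)\geq U_{yy}(t),
\]
each coefficient-wise in $t$.

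The main obstacle is proving these two comparisons, which formalise the intuition that the non-leaf vertex $x$ of $U$ (supporting the subtree $A$) admits at least as many walks of each length as the leaf $y$ at the opposite end of the path. Since $y$ is a leaf of $U$ with unique neighbor $z$, they simplify to $\phi^U_k(x)\geq\phi^U_{k-1}(z)$ for $k\geq 1$ and $(A_U^k)_{xx}\geq(A_U^{k-2})_{zz}$ for $k\geq 2$, where $\phi^U_k(v)$ denotes the number of walks of length $k$ in $U$ starting at $v$. The case $\ell=1$, where $z=x$, follows from the elementary monotonicity $\phi^U_k(v)\geq\phi^U_{k-1}(v)$ and $(A_U^k)_{vv}\geq(A_U^{k-2})_{vv}$, both valid in any graph with no isolated vertex (extend a walk by one step, or prepend/append $v\to u\to v$ for any neighbor $u$). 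For general $\ell$ I would construct an explicit injection sending a walk from $z$ (resp.\ a closed walk at $z$) of the shorter length to a walk from (resp.\ closed at) $x$ of the target length, by prepending and/or appending a walk along the path $x,p_1,\dots,p_{\ell-1}=z$ and absorbing the surplus length via excursions into the subtree $A$. Carrying out this injection cleanly, presumably by induction on $\ell$, is the technical heart of the argument.
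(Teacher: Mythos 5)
Your reduction to a ``single-branch move'' is false, and this breaks the argument at its first step. Take $\ell=1$ (so $x$ and $y$ are adjacent and the degree-two hypothesis is vacuous, with $z=x$), let $A$ be a single pendant vertex $a$ at $x$, and let $y$ have four pendant neighbours $b_1,\dots,b_4$. Moving the single edge $yb_1$ to $xb_1$ produces $T^*$ with degree sequence $(3,4,1,1,1,1,1)$ in place of $(2,5,1,1,1,1,1)$, and since $\hom(P_3,G)=\sum_v \deg(v)^2$, we get $\hom(P_3,T^*)=30<34=\hom(P_3,T)$. In general a single-branch move changes $\sum_v\deg(v)^2$ by $2(d_x-d_y+1)$, which is negative whenever $d_y\geq d_x+2$, whereas the full KC-transformation changes it by $2(d_x-1)(d_y-1)\geq 0$. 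So the monotonicity of $\hom(P_m,\cdot)$ under KC-transformation is genuinely a property of moving \emph{all} branches of $y$ simultaneously; it cannot be established branch by branch, and the ``slightly more general statement'' you propose to prove is simply not true.

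There is a related internal inconsistency: in your single-branch setting you declare $V(T)=V(U)\cup V(Q)$ with $U$ equal to the $x$--$y$ path plus $A$, but if $y$ retains other branches (which it must, except at the last iteration) those branches belong to $U$ as well, and then your two ``key inequalities'' $\Phi_U(x,t)\geq\Phi_U(y,t)$ and $U_{xx}(t)\geq U_{yy}(t)$ fail: in the example above $\deg_U(y)=4>2=\deg_U(x)$, so already the length-one and length-two coefficients go the wrong way. The inequalities you want are only plausible when $y$ is a leaf of $U$, i.e.\ when the whole of $B$ is the single branch $Q$ — but in that case the single-branch move is the entire KC-transformation and no iteration is available. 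By contrast, the paper's proof (the $k$ odd case of Theorem 1.3, Lemma 4.2, together with the even case via Lemma 4.1) handles the full transformation at once, inducting on the walk length $r$ and maintaining a system of vertex-wise inequalities that pair the two ends of the path, such as $p'_i(r)+p'_j(r)\geq p_i(r)+p_j(r)$ for $i+j\leq k$ and $a'_m(r)\geq a_m(r)$, $b'_n(r)\geq b_n(r)$; it is exactly this pairing/symmetrization of the two path ends, unavailable to a one-branch-at-a-time argument, that makes the comparison go through. To salvage your transfer-matrix strategy you would have to cut along the whole set of edges from $y$ to $N(y)\setminus\{z\}$ simultaneously (or along the path), not along a single edge, and prove correspondingly stronger comparisons.
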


Now a natural question arises: does inequality~\eqref{eq:1} still true when
replacing $P_m$ by any tree? A tree is called \emph{starlike} if it has
at most one vertex of degree greater than two. Note that paths are
starlike. We answer this question in the affirmative for starlike trees.

\begin{Th}\label{of starlike}
Let $T$ be a tree and $T'$ the KC-transformation of $T$ with respect to a path
of length $k$. Then the inequality  
\begin{equation}\label{eq:2}
\hom(H,T')\geq \hom(H,T)
\end{equation}
holds when  $k$ is even and $H$ is any tree, or $k$ is odd and $H$ is a starlike
tree. 
\end{Th}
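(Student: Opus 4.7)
I set up notation as follows. The special $x$--$y$ path $P$ in $T$ has vertices $x = x_0, x_1, \ldots, x_k = y$; let $X$ (resp.\ $Y$) be the rooted subtree of $T$ at $x$ (resp.\ $y$) consisting of the branches at $x$ not through $x_1$ (resp.\ at $y$ not through $z = x_{k-1}$). The tree $T'$ is obtained by grafting $Y$ onto $X$ at $x$, leaving $y$ as the leaf endpoint of a length-$k$ path in $T'$. For a rooted tree $(H, r)$, the Tree-walk algorithm of Section~\ref{treewalk} provides the vector $N(H, r; G)$ with $v$-component $N_v(H, r; G) = |\{\phi \in \Hom(H, G) : \phi(r) = v\}|$ and the recursion
\[
N_v(H, r; G) = \prod_{\text{children } H_i \text{ of } r} \bigl(A_G \cdot N(H_i, r_i; G)\bigr)_v,
\]
where $A_G$ is the adjacency matrix of $G$ and $r_i$ is the root of the subtree $H_i$; summing yields $\hom(H, G) = \sum_v N_v(H, r; G)$.

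For $H$ starlike rooted at its central vertex $u^*$ of degree $d$ with legs of lengths $\ell_1, \ldots, \ell_d$, the formula collapses to
\[
\hom(H, G) = \sum_{v \in V(G)} \prod_{i=1}^d w_{\ell_i}(v; G), \qquad w_\ell(v; G) := (A_G^\ell \mathbf{1})_v.
\]
I plan to establish $\hom(H, T) \leq \hom(H, T')$ by comparing, for each common starting vertex $v$, the $d$-tuples of walks from $v$ in $T$ versus in $T'$. Decomposing each walk into excursions into $X$, excursions into $Y$, and traversals of $P$, one sees that under the KC-transformation the $Y$-excursions happening at $y$ in $T$ become $Y$-excursions happening at $x$ in $T'$. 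The technical step is to construct an injection from walk-tuples in $T$ to walk-tuples in $T'$ that preserves the common starting vertex $v$, generalizing the single-walk injection underlying Theorem~\ref{th:1}. For the $k$-even case with arbitrary $H$, I would carry out the analogous comparison through the full tree-walk recursion, using that $x$ and $y$ lie in the same bipartition class of $T$ to relocate the $Y$-contribution without a sign change.

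The main obstacle is the parity of $k$. When $k$ is odd, the bipartition classes of $x$ and $y$ differ, so relocating subtrees across $P$ induces a parity shift in the recursion; the starlike hypothesis is needed precisely because all legs emanate from the common root vertex $v$, allowing the discrepancy to be absorbed into the symmetric product $\prod_i w_{\ell_i}(v;\cdot)$. Proving the walk-tuple injection while maintaining a common starting vertex in a parity-sensitive way is the technical heart of the argument.
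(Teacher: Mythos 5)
Your setup — reducing the starlike case to the walk counts $w_{\ell}(v;G)=(A_G^{\ell}\mathbf{1})_v$ and writing $\hom(H,G)=\sum_v\prod_i w_{\ell_i}(v;G)$ for $H$ rooted at its centre — is exactly the reduction the paper uses. But the core of your argument is missing, and the form in which you plan to carry it out would not work. You propose to compare, \emph{for each common starting vertex $v$}, the $d$-tuples of walks from $v$ in $T$ and in $T'$, via a starting-vertex-preserving injection. Under the natural correspondence $p_i\leftrightarrow p'_i$ between the path vertices of $T$ and $T'$, pointwise domination of walk counts simply fails: the vertex $y=p_k$ loses all its branches under the KC-transformation and becomes the endpoint of a pendant path in $T'$, so already for $\ell=1$ the number of walks of length $\ell$ starting at $p'_k$ in $T'$ is $1$ while in $T$ it is $\deg_T(y)\geq 2$. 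Hence no injection of walk-tuples that fixes the starting vertex can exist, and the vertex-by-vertex comparison you describe breaks down on the half of the path nearer to $B$. What is true — and what the paper proves — is a \emph{paired} statement: with $p_i(r)$ denoting the number of walks of length $r$ ending at $p_i$, one has $p'_i(r)\geq\max\bigl(p_i(r),p_{k-i}(r)\bigr)$ for $i\leq k/2$ and $p'_i(r)+p'_j(r)\geq p_i(r)+p_j(r)$ for $i+j\leq k$ (Lemma~\ref{path-odd k}, proved by a delicate induction on $r$ with separate subcases $j=0,1$, $2\leq j\leq k-2$, $j=k-1$, $j=k$, some of which require stepping back two levels). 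One then needs the multiplicative stability of these paired inequalities under the Hadamard product over the legs of the starlike tree (Lemma~\ref{funny}), and only after summing the products over the pairs $\{p_i,p_{k-i}\}$ does one recover $\hom(H,T')\geq\hom(H,T)$. Your proposal explicitly defers all of this ("the technical heart"), so the odd case is not proved, and the mechanism you name for it is the wrong one.

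The even case is likewise only gestured at. Saying you would "carry out the analogous comparison through the full tree-walk recursion, using that $x$ and $y$ lie in the same bipartition class" does not identify the invariant that makes the recursion go through. The paper's even-case proof consists precisely in formulating the inequalities \eqref{ine1}--\eqref{ine3} on hom-vectors (vertexwise domination off the path, paired domination $p'_i\geq p_i,\,p'_i\geq p_{k-i}$ and $p'_i+p'_{k-i}\geq p_i+p_{k-i}$ on the path) and checking that both recursion steps of the tree-walk algorithm — multiplication by the adjacency matrix and the Hadamard product, the latter via Lemma~\ref{funny} — preserve them; the fact that the pairing $i\leftrightarrow k-i$ is compatible with the adjacency step is exactly where the evenness of $k$ enters, and where the argument fails for odd $k$ and general $H$ (as the paper's counterexample shows it must). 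None of this verification appears in your proposal, so as it stands it is a plan rather than a proof, and its central comparison needs to be reformulated in the paired form before it can be completed.
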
 
 
Moreover, we find a counterexample for inequality~\eqref{eq:2} when $k$ is odd
and $H$ is not a starlike tree (see the end of Section~\ref{treewalks}).  
Another extremal problem concerning the number of homomorphisms between trees
that worth considering is to find the extremal trees for $\hom(\cdotp, P_n)$
over all trees on $m$ vertices. For this problem, we prove the following
theorem, which can be considered as a dual of inequality~\eqref{path}.

\begin{Th} \label{into paths} Let $T_m$ be  a tree on $m$ vertices and let
  $T'_m$ be obtained from $T_m$ by a KC-transformation.
\begin{itemize}
\item[(i)] If $n$ is even, or $n$ is odd and $\diam(T_m)\leq n-1$, then
\begin{equation}\label{KCtopath}
\hom(T_m,P_n)\leq \hom(T'_m,P_n).
\end{equation}
\item[(ii)] For any $m,n$,
$$\hom(P_m,P_n)\leq \hom(T_m,P_n)\leq \hom(S_m,P_n).$$
\end{itemize}
\end{Th}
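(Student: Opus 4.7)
The plan is to prove part (i) via a transfer-matrix decomposition along the KC-path and then derive part (ii) from (i) combined with Proposition~\ref{pr1}.

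\emph{Setup for (i).} Label the KC-path $x = z_0, z_1, \ldots, z_k = y$, with each $z_i$ ($1 \leq i \leq k-1$) of degree two in $T_m$. Decompose $T_m$ into the subtree $X$ rooted at $x$ and the subtree $Y$ rooted at $y$, neither containing the internal path vertices. For $a \in V(P_n) = \{1, \ldots, n\}$, set
$$f_X(a) = |\{\phi \colon X \to P_n : \phi(x) = a\}|,$$
and similarly $f_Y(b)$. Writing $A$ for the adjacency matrix of $P_n$, the transfer-matrix method along the $x$-$y$ path yields
$$\hom(T_m, P_n) = f_X^{\top} A^k f_Y, \qquad \hom(T'_m, P_n) = f_X^{\top} D f_Y,$$
where $D = \mathrm{diag}(A^k \mathbf{1})$; the second identity uses that in $T'_m$ both $X$ and $Y$ hang at $x$, while the pendant path $x z_1 \cdots z_{k-1} y$ contributes the factor $(A^k \mathbf{1})_a$. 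Hence \eqref{KCtopath} is equivalent to
$$\tfrac12 \sum_{a, b} (A^k)_{ab}\bigl(f_X(a) - f_X(b)\bigr)\bigl(f_Y(a) - f_Y(b)\bigr) \geq 0.$$

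\emph{Key structural lemma.} The reflection $\sigma \colon i \mapsto n+1-i$ is an automorphism of $P_n$, so $f_X$ and $f_Y$ are $\sigma$-invariant. I would then show by induction on $|E(X)|$ that $f_X$ is \emph{unimodal}, i.e., non-decreasing on $\{1, \ldots, \lceil n/2\rceil\}$. The inductive step uses the product recursion $f_X(a) = \prod_c (A f_{X_c})(a)$ over the children $c$ of the root and reduces to the statement that the adjacency operator of $P_n$ preserves the cone of symmetric, unimodal, nonnegative functions on $V(P_n)$. A direct finite-difference computation handles this when $n$ is even. When $n$ is odd, the same calculation near the central vertex demands an additional discrete concavity bound, and this is exactly where the diameter hypothesis $\diam(T_m) \leq n-1$ enters: it caps the depth of the recursion and prevents $f_{X_c}$ from escaping the regime where the required concavity at the middle is automatic.

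\emph{From (i) to (ii).} Granted $\sigma$-symmetry and unimodality of $f_X$ and $f_Y$, pair $(a, b)$ with $(\sigma(a), \sigma(b))$ in the bilinear form; the contributions collapse to pairs $a, b$ lying on the same half of $V(P_n)$, where $f_X(a) - f_X(b)$ and $f_Y(a) - f_Y(b)$ carry matching signs, yielding nonnegativity and proving (i). For (ii), Proposition~\ref{pr1} supplies KC-chains from $P_m$ up to $T_m$ and from $T_m$ up to $S_m$, along which (i) gives monotonicity of $\hom(\cdot, P_n)$ step-by-step; this delivers both $\hom(P_m, P_n) \leq \hom(T_m, P_n)$ and $\hom(T_m, P_n) \leq \hom(S_m, P_n)$. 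For the exceptional cases with $n$ odd and an intermediate tree violating the diameter condition, a short direct argument — restricting the chain to the portion where the condition holds and comparing the remaining endpoints using the explicit product formula for $\hom(\cdot, P_n)$ — completes the proof. The principal obstacle throughout is the unimodality lemma, particularly the second-difference control at the central vertex of $P_n$ when $n$ is odd, which is where the diameter hypothesis becomes indispensable.
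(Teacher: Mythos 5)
Your bilinear-form reformulation for even $n$ is correct and arguably cleaner than the paper's dominance-order machinery: with $\sigma$-symmetric, unimodal $f_X, f_Y$, the differences $f_X(a)-f_X(b)$ and $f_Y(a)-f_Y(b)$ always carry the same sign (or vanish), so the quadratic form is termwise nonnegative. The unimodality lemma for even $n$ is also sound --- the reflection $i\mapsto n+1-i$ swaps the two color classes of $P_n$, so the parity ``zigzag'' in a hom-vector is killed by symmetry, and one checks directly that $A$ preserves the cone of symmetric unimodal vectors when $n$ is even.

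There are, however, two genuine gaps. First, your account of the odd-$n$ case in part~(i) misidentifies the obstruction. The failure for odd $n$ is not a concavity issue ``at the central vertex'' that can be controlled by bounding recursion depth: the reflection now \emph{fixes} each color class, so the parity zigzag survives and hom-vectors into $P_n$ are simply not unimodal (the paper is explicit about this and introduces the weaker notion ``symmetric bi-unimodal'' as a substitute). The diameter hypothesis enters for a completely different reason. Lemma~\ref{average} shows that if $\diam(T)\le n-1$ then $\hom(T,P_n)=\tfrac12\bigl(\hom(T,P_{n-1})+\hom(T,P_{n+1})\bigr)$; since $n-1$ and $n+1$ are even, the odd case is derived by averaging two applications of the even case. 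Your inductive-concavity sketch would not survive even one application of $A$ on an odd path, so that part of (i) needs the averaging trick (or some equivalent reduction) to go through.

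Second, the claim that the exceptional part of (ii) --- odd $n$ with intermediate trees violating the diameter bound --- is ``a short direct argument'' is far from the truth and cannot be patched with a product formula. The paper's Subsection~\ref{homtopath}.2 is devoted entirely to this case and builds up a substantial toolkit: the correlation inequality and log-concavity for hom-vectors into odd paths, the quantity $g(T,P_n)=\hom_0\cdot\hom_1$, the LS-switch (Theorem~\ref{LS-Th}), the short-path shift, Theorem~\ref{g-path}, the claw-deletion transformation (Lemma~\ref{claw-deletion}), Lemma~\ref{01-path}, and Theorem~\ref{S-hom}, combined via a delicate Wiener-index extremality argument. Your proposal contains none of this, so as written part~(ii) is not proved for odd $n$; the KC-chain argument alone is blocked precisely by the exceptional cases of part~(i) that it was invoked to handle.
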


Note that inequality~\eqref{KCtopath} is not  true in general when $n$ is odd
and $\diam(T_m)$ is greater than $n-1$; see Fig.~\ref{counter2} for a
counterexample.

For the sake of keeping this paper self-contained, we will also give a new
proof for the following theorem of Sidorenko \cite{si} concerning the extremal
property of the stars among trees.  Note that Fiol and Garriga~\cite{fg}
proved the special case of this theorem when $T_m=P_m$, clearly, they were not
aware of the work of Sidorenko. 
  
\begin{Th}[Sidorenko] \label{of stars} Let $G$ be an arbitrary graph and let
  $T_m$ be a tree on $m$ vertices. Then
$$\hom(T_m,G)\leq \hom(S_m,G).$$
\end{Th}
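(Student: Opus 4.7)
My plan is to prove a one-parameter strengthening of Theorem~\ref{of stars}, from which the theorem itself follows on setting the parameter to zero. Specifically, I would establish that for every rooted tree $(T,r)$ on $k$ vertices, every graph $G$, and every real $c\geq 0$,
\[
\sum_{\phi\in\Hom(T,G)} d_G(\phi(r))^{\,c} \;\leq\; \sum_{v\in V(G)} d_G(v)^{\,c+k-1} \qquad (\star)
\]
(with the convention $0^0=1$). The case $c=0$, together with the elementary identity $\hom(S_m,G)=\sum_v d_G(v)^{m-1}$, is exactly the statement $\hom(T_m,G)\leq\hom(S_m,G)$.

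I would prove $(\star)$ by induction on $k$. The base cases $k=1$ and $k=2$ are immediate: both sides equal $\sum_v d_G(v)^c$ and $\sum_v d_G(v)^{c+1}$ respectively. For $k\geq 3$, pick any leaf $\ell$ of $T$ with $\ell\neq r$ and let $p$ be its unique neighbor. Summing out the admissible values $\phi(\ell)\in N_G(\phi(p))$ first contributes a factor $d_G(\phi(p))$, so $(\star)$ reduces to
\[
\sum_{\phi'\in\Hom(T-\ell,\,G)} d_G(\phi'(r))^{\,c}\, d_G(\phi'(p)) \;\leq\; \sum_{v} d_G(v)^{\,c+k-1}.
\]
If $r=p$ the left side is $\sum_{\phi'} d_G(\phi'(r))^{c+1}$, and the inductive hypothesis applied to $(T-\ell,r)$ with exponent $c+1$ closes the step. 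If $r\neq p$ I would invoke the weighted AM--GM inequality
\[
a^c\, b \;\leq\; \tfrac{c}{c+1}\, a^{c+1} + \tfrac{1}{c+1}\, b^{c+1}
\]
pointwise with $a=d_G(\phi'(r))$ and $b=d_G(\phi'(p))$, and then apply the inductive hypothesis twice on $T-\ell$, once rooted at $r$ and once at $p$, each with exponent $c+1$; both bounds equal $\sum_v d_G(v)^{c+k-1}$, and the convex combination with coefficients $\tfrac{c}{c+1}$ and $\tfrac{1}{c+1}$ collapses to exactly that quantity.

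The key subtlety I expect is identifying the right inductive statement. The tempting per-vertex refinement $|\{\phi\in\Hom(T,G):\phi(r)=v\}|\leq d_G(v)^{k-1}$ is simply false in general (take $G=S_n$ with $v$ a leaf of $G$ and $T=P_3$ rooted at an endpoint), so the unweighted inequality $\hom(T,G)\leq\sum_v d_G(v)^{k-1}$ cannot be propagated one leaf at a time on its own. The role of the exponent $c$ in $(\star)$ is precisely to make the statement self-reproducing under leaf removal: the new factor $d_G(\phi'(p))$ that appears when one integrates out a leaf can be absorbed into the existing root weight (directly when $r=p$, and via AM--GM when $r\neq p$) at the cost of bumping the exponent by one, which is exactly the bookkeeping that allows the induction to close.
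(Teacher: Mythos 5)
Your proof is correct, but it follows a genuinely different route from the one in the paper. You prove the parametrized strengthening $\sum_{\phi\in\Hom(T,G)} d_G(\phi(r))^{c}\leq \sum_{v\in V(G)} d_G(v)^{c+k-1}$ by induction on $|V(T)|$, peeling off a leaf and absorbing the resulting degree factor either directly (when the leaf's neighbor is the root) or via the weighted AM--GM bound $a^c b\leq \tfrac{c}{c+1}a^{c+1}+\tfrac{1}{c+1}b^{c+1}$ applied to the two roots of $T-\ell$; the steps all check out (the induction hypothesis is quantified over all $c\geq 0$, the fiber of the restriction map has size exactly $d_G(\phi'(p))$, and your counterexample showing the na\"{\i}ve per-vertex bound fails correctly explains why the exponent $c$ is needed). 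The paper instead argues extremally, following Sidorenko: it proves the symmetrization inequality $2\hom(A,G)\leq \hom(B,G)+\hom(C,G)$ for trees obtained by attaching two pendant edges at two specified vertices versus doubling one of them (a square-expansion argument), and combines it with the strict Wiener-index inequality $2W(A)>W(B)+W(C)$ to show that a tree maximizing $\hom(\cdot\,,G)$ with minimal Wiener index must be $S_m$. Your approach buys an explicit quantitative family of inequalities (the $c>0$ cases are a genuine strengthening, in the spirit of the degree-weighted bounds the paper derives from Markov-chain entropies) and is a self-contained induction with no potential-function bookkeeping; the paper's approach buys a structural local-move picture consistent with the shifting techniques (KC-transformation, LS-switch) used throughout the paper, and its key lemma holds verbatim when arbitrary rooted graphs, not just pendant edges, are reattached, which makes it reusable in related extremal statements.
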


We have already seen a few examples to the phenomenon that in many extremal
problems concerning trees it turns out that the maximal (minimal) value of the
examined parameter is attained at the star and the minimal (maximal) value is
attained at the path among trees on $n$ vertices (cf.~\cite{pc2,lp}). In what
follows we will show that this  phenomenon occurs quite frequently if one
studies homomorphisms of trees.

Let $Y_{a,b,c}$ be the starlike tree on $a+b+c+1$ vertices which has exactly
$3$ leaves and the vertex of degree $3$ has distance $a,b,c$ from the leaves,
respectively.

\begin{Th} \label{minimality-path} Let $T_n$ be a tree on $n$ vertices. Assume
  that for a tree $T_m$ we have
$$\hom(T_m,T_n)<\hom(T_m,P_n).$$
Then $T_n=Y_{1,1,n-3}$ and $n$ is even.
\end{Th}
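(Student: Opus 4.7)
The plan is to apply Theorem~\ref{of starlike} iteratively along a chain in the KC poset of Proposition~\ref{pr1}, from $P_n$ up to $T_n$, and to pinpoint the trees $T_n$ for which no such chain can satisfy the hypotheses of Theorem~\ref{of starlike}. I would argue by contradiction: assume $\hom(T_m, T_n) < \hom(T_m, P_n)$. If $T_m$ is starlike, Theorem~\ref{of starlike} applies at every KC step regardless of path parity, so following any chain from $P_n$ up to $T_n$ yields $\hom(T_m, T_n) \geq \hom(T_m, P_n)$, a contradiction. Hence $T_m$ must be non-starlike, and from here on Theorem~\ref{of starlike} can only be invoked at KC steps of even path length.

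Next I would induct on the rank of $T_n$ in the KC poset. Suppose $T_n = KC(T_n')$ for some $T_n'$ via an even-length path and $T_n'$ is not the exception $Y_{1,1,n-3}$ with $n$ even; then Theorem~\ref{of starlike} gives $\hom(T_m, T_n) \geq \hom(T_m, T_n')$, and the induction hypothesis applied to $T_n'$ produces the contradiction $\hom(T_m, T_n) \geq \hom(T_m, P_n)$. I must therefore show that every non-exceptional $T_n \neq P_n$ admits such an even-length inverse KC landing outside the exception. When the rank of $T_n$ is at least $2$, the considerable freedom in choosing the leaf $y$, the vertex $x$, and the subset of $x$'s subtrees to be transplanted to $y$ (the data of an inverse KC) should suffice by a structural case analysis. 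When the rank is $1$, so $T_n = Y_{a,b,c}$ with $a+b+c = n-1$, the unique choice of $T_n'$ is $P_n$ and the KC path length can be any one of $a, b, c$; an even choice exists iff at least one of $a, b, c$ is even, equivalently iff $n$ is odd.

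The main obstacle is therefore the case $n$ even and $T_n = Y_{a,b,c}$ with $a, b, c$ all odd, where the only available descent has odd length and Theorem~\ref{of starlike} is inapplicable. Here one must argue directly that only $(a,b,c) = (1,1,n-3)$ (up to permutation) can yield the strict inequality. My plan is to use the Tree-walk algorithm of Section~\ref{treewalk} and to exploit the structural peculiarity that $Y_{1,1,n-3}$ alone among all-odd $Y_{a,b,c}$ carries a pair of twin leaves at its branch vertex. Identifying the two twins gives a retraction $Y_{1,1,n-3} \to P_{n-1}$ that rewrites $\hom(T_m, Y_{1,1,n-3})$ as a weighted sum $\sum_{g\colon T_m \to P_{n-1}} 2^{|g^{-1}(v)|}$, with $v$ the image of the twins; this identity can then be compared directly with $\hom(T_m, P_n)$. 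For any other all-odd $Y_{a,b,c}$ no such retraction is available, and a separate argument (combining a bipartite accounting with an analysis based on the longer branches) must establish $\hom(T_m, Y_{a,b,c}) \geq \hom(T_m, P_n)$ and confirm $Y_{1,1,n-3}$ as the sole exception.
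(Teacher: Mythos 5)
There is a genuine gap, and it sits exactly where the paper had to abandon the KC-machinery. Your induction step assumes that every non-exceptional $T_n$ of rank at least $2$ in the KC poset admits an \emph{even-length} inverse KC, citing ``considerable freedom in choosing the leaf $y$, the vertex $x$, and the subset of subtrees.'' But in a nontrivial inverse KC the vertex $x$ is forced: starting from a leaf $y'$, all interior vertices of the $x$--$y'$ path must have degree two and $x$ must carry at least two off-path branches, so $x$ is necessarily the branch vertex nearest to $y'$ and the path length is exactly the distance from $y'$ to that branch vertex. Hence the only freedom affecting parity is the choice of the leaf, and there are many trees with at least four leaves in which \emph{every} leaf is at odd distance from its nearest vertex of degree $\geq 3$ (stars, double stars $S^*_{2k}$, spiders with all legs of odd length, and double starlike trees with all pendant paths odd). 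For such $T_n$ no even descent exists, and odd descents are unusable once you have reduced to non-starlike $T_m$: the paper's doublestar example, $\hom(S^*_{2k},S^*_{2k})>\hom(S^*_{2k},S_{2k})$ with $S_{2k}=KC(S^*_{2k})$ along a path of length $1$, shows that odd-KC monotonicity genuinely fails for non-starlike $H$, and the remark following Fig.~\ref{counter3} states explicitly that one cannot rely entirely on KC-transformations here. So your induction has no way past these trees.

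The second gap is the one you flag yourself: for $n$ even and $Y_{a,b,c}$ with $a,b,c$ all odd and $(a,b,c)\neq(1,1,n-3)$, you say ``a separate argument must establish'' $\hom(T_m,Y_{a,b,c})\geq\hom(T_m,P_n)$ --- but that separate argument is the substantive content, not a loose end. The paper closes both gaps by a completely different mechanism: it proves the absolute lower bound $\hom(T_m,T_n)\geq (n-2)2^{m-1}+2=\hom(S_m,P_n)\geq\hom(T_m,P_n)$ for every tree $T_n$ with at least four leaves (Theorem~\ref{geq 4 leaves}, via the Markov-chain entropy bound of Theorem~\ref{markov}, the Reduction Lemma~\ref{reduction}, and the LS-switch) and for the problematic three-leaf trees (Lemma~\ref{3-leaves}, covering $\min(a,b,c)\geq 2$ and $c=1$, $\min(a,b)\geq 3$), reserving the even-KC argument only for $Y_{a,b,c}$ with some even arm. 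Your twin-leaf retraction for $Y_{1,1,n-3}$ is, moreover, not needed for the statement as given, since $Y_{1,1,n-3}$ is the permitted exception. Without a substitute for the entropy-type lower bounds, the proposal does not prove the theorem.
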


In fact, we conjecture that we only have to exclude the case $n=4$ and
$T_4=S_4$. 

\begin{Conj}\label{disappointment}  Let $T_n$ be a tree on $n$ vertices, where
  $n\geq 5$. Then for any tree $T_m$ we have
$$\hom(T_m,P_n)\leq \hom(T_m,T_n).$$
\end{Conj}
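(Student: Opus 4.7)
The plan is to invoke Theorem~\ref{minimality-path} to reduce the conjecture to a single remaining case: proving that for every even $n\geq 6$ and every tree $T_m$,
\begin{equation}\label{eq:target}
\hom(T_m, P_n)\leq \hom(T_m, Y_{1,1,n-3}).
\end{equation}
Indeed, Theorem~\ref{minimality-path} guarantees that any hypothetical counterexample to the conjecture must have $T_n=Y_{1,1,n-3}$ with $n$ even, so \eqref{eq:target} is equivalent to the conjecture. A convenient observation is that $Y_{1,1,n-3}$ is exactly the KC-transformation of $P_n=v_1v_2\cdots v_n$ along the subpath $v_2v_3\cdots v_{n-1}$, whose length is $k=n-3$. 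When $n$ is even this $k$ is odd, and Theorem~\ref{of starlike} then directly yields \eqref{eq:target} for every starlike $T_m$; the remaining case is when $T_m$ contains at least two vertices of degree $\geq 3$.

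For that case I would proceed via the Tree-walk algorithm of Section~\ref{treewalk}. Root $T_m$ at a vertex $v$ and, for every rooted subtree $S(u)$ appearing in the recursion, introduce the profile vectors $\mathbf{x}^{P_n}_{S(u)}\in\mathbb{R}^{V(P_n)}$ and $\mathbf{x}^{Y}_{S(u)}\in\mathbb{R}^{V(Y_{1,1,n-3})}$, whose entry at a vertex $g$ counts the homomorphisms $S\to G$ sending $u$ to $g$; then $\hom(T_m,G)=\mathbf{1}^{\top}\mathbf{x}^{G}_{T_m(v)}$. The target is a linear map $M\colon\mathbb{R}^{V(P_n)}\to\mathbb{R}^{V(Y_{1,1,n-3})}$ with $\mathbf{1}^{\top}M=\mathbf{1}^{\top}$ satisfying the entrywise domination $M\mathbf{x}^{P_n}_{S(u)}\leq \mathbf{x}^{Y}_{S(u)}$ for every rooted subtree $S(u)$ of $T_m$, built by identifying the long spine of $Y_{1,1,n-3}$ with an appropriate subpath of $P_n$ and distributing the extra pendant. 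An induction on $|V(S)|$ then reduces to a local matrix inequality that has to be verified at the branching vertex of $Y_{1,1,n-3}$, which is the step where the parity of $n$ would enter.

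The principal difficulty, and the reason the statement is left as a conjecture, is precisely the branching behaviour of non-starlike $T_m$. The counterexamples to inequality \eqref{eq:2} mentioned at the end of Section~\ref{treewalks} show that, when $k$ is odd, no universal profile-vector comparison is possible for arbitrary KC-transformations; any proof of \eqref{eq:target} must therefore exploit the very special form of the transformation $P_n\to Y_{1,1,n-3}$, namely that the two branches newly created at $v_2$ are both pendants. A possible alternative to the transfer-matrix route is to construct an explicit injection $\Hom(T_m,P_n)\hookrightarrow\Hom(T_m,Y_{1,1,n-3})$: a homomorphism $f$ whose image avoids $v_n$ maps directly into the spine of $Y_{1,1,n-3}$, while the remaining homomorphisms are redirected using the extra pendant together with an even-length parity argument made available by $n$ being even. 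Either way, I expect an analysis by parity of the distances between the branching vertices of $T_m$ to play a decisive role, paralleling the role of parity in Theorem~\ref{of starlike}.
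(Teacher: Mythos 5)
The statement you are asked to prove is labelled a \emph{Conjecture} in the paper, and the paper does not prove it; the authors return to it in the final section only to record the same reduction you perform. Your opening step — invoking Theorem~\ref{minimality-path} to reduce the conjecture to showing $\hom(T_m,P_n)\leq \hom(T_m,Y_{1,1,n-3})$ for even $n\geq 6$ — is precisely the remark the paper makes after restating Conjecture~\ref{disappointment} in Section~7, so that part of your reasoning agrees with the authors' own. Your observations that $Y_{1,1,n-3}=KC(P_n,v_2,v_{n-1})$ is an odd-length KC-transformation of $P_n$ when $n$ is even, and that Theorem~\ref{of starlike} therefore already handles all starlike $T_m$, are also correct and a useful sharpening.

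Beyond this reduction, however, your proposal contains no proof: the ``linear map $M$ with $\mathbf{1}^{\top}M=\mathbf{1}^{\top}$ dominating profile vectors'' and the ``explicit injection'' are both program sketches with the decisive step (the branching-vertex comparison, or the parity-based redirection) left unconstructed. You acknowledge this yourself when you write that this is ``precisely the reason the statement is left as a conjecture.'' That honesty is appropriate — the case of non-starlike $T_m$ is genuinely open, and the counterexamples the paper gives to inequality~\eqref{eq:2} for odd $k$ show that no generic profile-vector domination can work; any successful argument really would have to exploit the very special structure of $Y_{1,1,n-3}$ as you suspect. So: your reduction is correct and matches the paper, but neither you nor the paper supplies a proof of the conjecture, and your proposal should not be read as one.
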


An \emph{endomorphism} of a graph is a homomorphism from the graph to
itself. For a graph $G$, denote by $\End(G)$ the set of endomorphisms of
$G$. We remark that $\End(G)$ forms  a monoid  with respect to the composition
of mappings. 
One of the main results of this paper is the following extremal property about
the number of endomorphisms of trees. 

\begin{Th}\label{main} For all trees $T_n$ on $n$ vertices we have
$$|\End(P_n)|\leq|\End(T_n)|\leq|\End(S_n)|.$$
\end{Th}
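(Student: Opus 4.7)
Since $|\End(T_n)| = \hom(T_n, T_n)$, the plan is to handle the two inequalities of the theorem separately, invoking the extremal results established earlier. For the upper bound, I would first apply Sidorenko's theorem (Theorem \ref{of stars}) to get $\hom(T_n, T_n) \leq \hom(S_n, T_n)$. A homomorphism from $S_n$ to any graph $G$ is determined by the image $v$ of the center together with $n-1$ independent choices in $N(v)$ for the leaves, so $\hom(S_n, G) = \sum_{v \in V(G)} \deg_G(v)^{n-1}$. For $G = T_n$ a tree on $n$ vertices we have $\sum_v \deg(v) = 2(n-1)$ with $1 \leq \deg(v) \leq n-1$; a standard smoothing argument using convexity of $x \mapsto x^{n-1}$ shows that $\sum_v \deg(v)^{n-1}$ is then maximized by the star's degree sequence $(n-1, 1, \ldots, 1)$, giving $\hom(S_n, T_n) \leq (n-1)^{n-1} + (n-1) = \hom(S_n, S_n) = |\End(S_n)|$.

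For the lower bound, I would apply Theorem \ref{into paths}(ii) with $T_m = T_n$ to obtain $\hom(P_n, P_n) \leq \hom(T_n, P_n)$, so it suffices to show $\hom(T_n, P_n) \leq \hom(T_n, T_n)$. When $T_n \neq Y_{1,1,n-3}$ or $n$ is odd, this is immediate from Theorem \ref{minimality-path}, which explicitly fails only in the single case $T_n = Y_{1,1,n-3}$ with $n$ even.

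To dispose of that exceptional case, I would use the observation that $Y_{1,1,n-3}$ is obtained from $P_n = v_1 v_2 \cdots v_n$ by the single KC-transformation $KC(P_n, v_2, v_{n-1})$, which operates along the $v_2$--$v_{n-1}$ path of length $n-3$ and replaces the edge $\{v_{n-1}, v_n\}$ with $\{v_2, v_n\}$. Since $Y_{1,1,n-3}$ is itself starlike (its unique vertex of degree $\geq 3$ is $v_2$) and since $n$ even forces $n-3$ to be odd, Theorem \ref{of starlike} applies with $H = Y_{1,1,n-3}$ and yields
$$\hom(Y_{1,1,n-3}, Y_{1,1,n-3}) \geq \hom(Y_{1,1,n-3}, P_n) \geq \hom(P_n, P_n),$$
the last inequality being Theorem \ref{into paths}(ii) once more.

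The principal obstacle I anticipate is precisely the exceptional case carved out by Theorem \ref{minimality-path}. Its resolution is agreeably clean because the exceptional tree $Y_{1,1,n-3}$ is starlike, and the parity condition ($n$ even) makes the relevant KC-path odd --- exactly the regime of Theorem \ref{of starlike}'s ``odd $k$, starlike $H$'' clause. Consequently, the three prior extremal results --- Sidorenko's theorem, the into-paths inequality, and the starlike KC-theorem --- mesh perfectly, and no new calculation is required.
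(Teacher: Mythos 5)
Your proof is correct, and the upper bound argument (Sidorenko's theorem followed by convexity of degree powers to compare $\hom(S_n,T_n)$ with $\hom(S_n,S_n)$) is essentially the paper's observation. For the lower bound, you take a slightly different decomposition than the paper does. The paper splits directly on the number of leaves of $T_n$: if $T_n$ has at least four leaves it invokes Corollary~\ref{4l} (the consequence of Theorem~\ref{geq 4 leaves}), and if $T_n$ has exactly three leaves it notes that $T_n$ is starlike and appeals directly to Theorem~\ref{of starlike} via Corollary~\ref{ex:totree}; in both cases $\hom(T_n,P_n)\geq\hom(P_n,P_n)$ then follows from Theorem~\ref{into paths}(ii). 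You instead route through the stronger Theorem~\ref{minimality-path}, which already isolates the unique possible exception $Y_{1,1,n-3}$ with $n$ even, and then dispatch that one tree by observing it is starlike and is one odd-length KC-transformation away from $P_n$. The two routes lean on the same underlying lemmas (indeed, Theorem~\ref{minimality-path} is itself proved from Theorem~\ref{4-leaves}, Lemma~\ref{3-leaves}, and Theorem~\ref{of starlike}), but the paper's casework is a bit lighter since it never needs to identify the exceptional tree explicitly --- any $3$-leaf tree is starlike, which already covers $Y_{1,1,n-3}$. Your version is a clean and valid alternative; it buys a certain economy of statement (one appeal to Theorem~\ref{minimality-path} instead of a fresh leaf-count trichotomy) at the price of carrying the single exceptional case explicitly.
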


This paper is organized as follows. In Section~\ref{treewalk}, we state the
tree-walk algorithm.  In Section~\ref{arb:graph}, we prove some lower bounds
involving Markov chains  and an upper bound (Theorem~\ref{of stars}) for the
number of homomorphisms from trees to an arbitrary graph.
Section~\ref{treewalks} is devoted to the proof of Theorem~\ref{of
  starlike}. The proofs of Theorem~\ref{minimality-path} and
Theorem~\ref{main} are given in Section~\ref{main:section}, where some lower
bounds concerning the homomorphisms of arbitrary trees are also proved.  In
Section~\ref{homtopath} we prove Theorem~\ref{into paths}, this part can be
read separately, it only builds on the algorithm of Section~\ref{treewalk}.

In order to make our paper transparent, we offer the following two tables,
Figure~\ref{table1} and \ref{table2}, which summarize our results. In both
tables, the first row follows from Theorem~\ref{th:1} or its generalization
Corollary~\ref{ex:totree}. The  last row is  obvious since $\hom(S_m,G)$ is
the sum of degree powers of $G$ and it also follows from
Corollary~\ref{ex:totree}. The first, second and third columns follow from
Theorem~\ref{into paths}, Theorem~\ref{of stars}  and Corollary~\ref{to stars}
respectively. The ``$\red{X}$" means that there is no inequality between the
two expressions in general and the ``\red{?}'' means that we don't know
whether the statement is true or not.   

\begin{figure}[h]
\begin{align*}
\hom(&P_n,P_n)\,\leq\,\hom(P_n,T_n)\,\leq\,\hom(P_n, S_n) \,\,\,\,\,\,\, \\
&\begin{turn}{90}
$\geq$
\end{turn}
\quad\quad\quad\quad\qquad\,\,
\red{?}
\quad\quad\quad\quad\qquad
\begin{turn}{90}
$\geq$
\end{turn}\\
\hom(&T_n,P_n)\,\leq\,\hom(T_n,T_n)\,\,\,\red{X}\,\,\hom(T_n, S_n)
\\
&\begin{turn}{90}
$\geq$
\end{turn}
\quad\quad\quad\quad\qquad
\begin{turn}{90}
$\geq$
\end{turn}
\quad\quad\quad\quad\qquad
\begin{turn}{90}
$\geq$
\end{turn}\\
\hom(&S_n,P_n)\,\leq\,\hom(S_n,T_n)\,\leq\,\hom(S_n, S_n) 
\end{align*}
\caption{\label{table1} The number of homomorphisms between trees of the same
  size.} 
\end{figure}

\begin{figure}[h]
\begin{align*}
\hom(&P_m,P_n)\,\leq\,\hom(P_m,T_n)\,\leq\,\hom(P_m, S_n) \\
&\begin{turn}{90}
$\geq$
\end{turn}
\quad\quad\quad\quad\qquad
\red{X}
\quad\quad\quad\quad\qquad
\begin{turn}{90}
$\geq$
\end{turn}\\
\hom(&T_m,P_n)\,\stackrel{(*)}{\leq}\,\hom(T_m,T_n)\,\,\,\red{X}\,\,\hom(T_m,
  S_n)\\ 
&\begin{turn}{90}
$\geq$
\end{turn}
\quad\quad\quad\quad\qquad
\begin{turn}{90}
$\geq$
\end{turn}
\quad\quad\quad\quad\qquad
\begin{turn}{90}
$\geq$
\end{turn}\\
\hom(&S_m,P_n)\,\leq\,\hom(S_m,T_n)\,\leq\,\hom(S_m, S_n) 
\end{align*}
\caption{\label{table2} The number of homomorphisms between trees of different
sizes. The $(*)$ means that there are some well-determined (possible)
counterexamples which should be excluded.}
\end{figure}

\section{The Tree-walk algorithm}
\label{treewalk}

In this section we shall state an algorithm for the number of homomorphisms
from a tree to any graph by the Transfer-matrix method. As a generalized
concept of walks in graphs, we call a homomorphism from a tree to a graph a
\emph{tree-walk} on this graph.

 Let ${\bf a}=(a_{1}, a_{2}, \dots, a_{n})$ and
${\bf b}=(b_{1}, b_{2}, \dots, b_{n})$ be two vectors. We usually denote by
$\lVert{\bf a}\rVert=a_{1}+a_{2}+\dots+a_{n}$ the \emph{norm} of ${\bf a}$ and
by ${\bf a}\ast{\bf b}=(a_{1}b_{1}, \dots, a_{n}b_{n})$ the \emph{Hadamard
  product} of ${\bf a}$ and ${\bf b}$.  
Denote by ${\bf 1_{n}}$ the $n$-dimensional row vector with all entries are
equal to $1$. Let $G$ be a graph with $n$ vertices. The \emph{adjacency
  matrix} of $G$ is the $n\times n$ matrix $A_G :=(a_{uv})_{u,v\in V(G)}$,
where $a_{uv}=1$ when $uv\in E(G)$, otherwise $0$. We begin with a fundamental
lemma about the number of walks in a graph. 

\begin{Lemma} \label{le:1}
Let $G$ be a labeled graph and $A=A_G$ the adjacency matrix of $G$. Then the
$(i,j)$-entry of the matrix $A^{n}$ counts the number of walks in $G$ from
vertex $i$ to vertex $j$ with length $n$. 
\end{Lemma}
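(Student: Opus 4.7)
The plan is to establish this classical fact by induction on $n$. For the base case $n=1$, the definition of the adjacency matrix gives $(A)_{ij}=1$ precisely when $ij\in E(G)$ and $0$ otherwise, which exactly counts walks of length $1$ from $i$ to $j$ (there is one such walk if $i$ and $j$ are adjacent, and none otherwise).

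For the inductive step, assume the claim holds for some $n\geq 1$. Using the matrix product rule,
\begin{equation*}
(A^{n+1})_{ij} \;=\; \sum_{k\in V(G)} (A^{n})_{ik}\, A_{kj}.
\end{equation*}
I would interpret each summand combinatorially: by the induction hypothesis, $(A^{n})_{ik}$ is the number of walks of length $n$ from $i$ to $k$, while $A_{kj}$ is the indicator that $kj\in E(G)$. Hence $(A^n)_{ik}A_{kj}$ counts walks of length $n+1$ from $i$ to $j$ whose penultimate vertex is $k$. Summing over $k$ yields the total number of walks of length $n+1$ from $i$ to $j$, since every such walk decomposes uniquely as a length-$n$ walk from $i$ to some vertex $k$ followed by the edge $kj$.

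There is no real obstacle in this argument; the only step that needs a word of care is the unique decomposition of a walk at its penultimate vertex, and this is immediate from the definition of a walk as a sequence of vertices in which consecutive pairs are adjacent. Because the result is entirely standard, I would present it in a compact form (two short paragraphs for base case and inductive step), perhaps remarking that the same argument gives an analogous statement for the number of homomorphisms from the path $P_{n+1}$ to $G$ rooted at ordered endpoints, which will be the starting point for the Transfer-matrix approach in the sequel.
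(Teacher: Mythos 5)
Your argument is correct and is exactly the ``easy induction on $n$'' that the paper invokes (with a reference to Stanley in place of details): base case from the definition of $A$, inductive step via the matrix product and the unique decomposition of a length-$(n+1)$ walk at its penultimate vertex. Nothing to add.
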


\begin{proof}
By easy induction on $n$. See for example \cite[Theorem 4.7.1]{st}.
\end{proof}

\begin{Def}[hom-vector]
Let $T$ be a tree and $G$ be a  graph with vertices labeled by $1, 2, \dots,
n$. Let $v\in V(T)$ be any vertex of $T$. The $n$-dimensional vector 
$${\bf
  h}(T,v,G):=(h_{1}, h_{2}, \dots, h_{n})$$ where 
$$
h_{i}=|\{f \in \Hom(T,G) \ | \ f(v)=i\}|,
$$
is called the \emph{hom-vector} at $v$ from $T$ to $G$. Clearly,
$\hom(T, G) = \lVert{\bf h}(T,v,G)\rVert$. 
\end{Def}
The following Tree-walk algorithm
can be viewed as a generalization of Lemma~\ref{le:1} for computing the number
of tree-walks in graphs. 
\vskip 0.1in

\noindent {\bf The Tree-walk algorithm.}
Let $A=A_G$ be the adjacency matrix of the labeled graph $G$. Let $v$ be a
leaf of the tree $T$. We now give the algorithm to compute ${\bf h}(T,v,G)$.  

Starting from a leaf of the tree $T$ other than $v$ (a tree usually has at
least two leaves), $v_{0}$ say. Walking along the tree until you come to the
first vertex with degree greater than two, $v_{1}$ say. Let $T_{1}$ denote the
path from $v_{0}$ to $v_{1}$ with length $d_{1}$. By Lemma~\ref{le:1}, we have
${\bf h}(T_{1},v_{1},G)={\bf 1_{n}} A^{d_{1}}$. If the degree of vertex
$v_{1}$ is $k$, we denote its $k-1$ branches other than $T_{1}$ by
$T_{2},T_{3},\dots,T_{k}$. There is one branch that contains the vertex $v$,
$T_{k}$ say. We can compute the hom-vectors ${\bf h}(T_{2},v_{1},G), {\bf
  h}(T_{3},v_{1},G), \dots , {\bf h}(T_{k-1},v_{1},G)$ by recursively using
this algorithm. Clearly we have  
\begin{equation}\label{eq:al1}
{\bf h}(\cup_{i=1}^{k-1}T_{i},v_{1},G)={\bf h}(T_{1},v_{1},G)\ast{\bf
  h}(T_{2},v_{1},G)\ast \dots \ast{\bf h}(T_{k-1},v_{1},G), 
\end{equation}
where $\cup_{i=1}^{k-1}T_{i}$ is the union of $T_{i}$, $1\leq i\leq k-1$.
Now continue to walk on the tree $T$ from $v_{1}$ along the branch $T_{k}$
until you go to the first vertex with degree bigger than two, $v_{2}$ say. Let
$d_2$ be the length of the path, denoted by $P$, from $v_1$ to $v_2$. By
induction on $d_2$, one easily shows that 
\begin{equation}\label{eq:al2}
{\bf h}(\cup_{i=1}^{k-1}T_{i}\cup P,v_{2},G)={\bf
  h}(\cup_{i=1}^{k-1}T_{i},v_{1},G)A^{d_{2}}, 
\end{equation}
which is the most important operation of this algorithm. Do the same thing on
$v_{2}$ as what we have done on $v_{1}$, and the process stop until we walk to
the last vertex of $T$, which  must be $v$ according to the algorithm.  At the
same time, the hom-vector at $v$ from $T$ to $G$ is obtained and thus
$\hom(T,G)$.  

Note that we can modify this algorithm to obtain ${\bf h}(T,v,G)$ at any vertex
$v\in V(T)$ by using operation~\eqref{eq:al1}.  
The following typical example would explain our algorithm better.

\begin{figure}[h!]
\begin{center}
\scalebox{.65}{\includegraphics{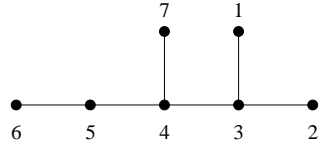}}
\end{center}
\caption{  \label{t1}A labeled tree on
  $7$ vertices.}     
\end{figure}

\begin{Exa} Let $T$ be the tree in Fig.~\ref{t1}. We also
choose $G$ to be $T$. Denote by $T[V]$ the induced subtree on the vertex set
$V\subseteq V(T)$. To compute ${\bf h}(T,7,T)$ by Tree-walk algorithm, we
start at vertex $6$ and first stop at $4$ which is a vertex with degree
greater than two. By Lemma~\ref{le:1} we have  
$${\bf h}(T[6,5,4],4,T)={\bf 1_{7}} A_T^{2}=(3,3,5,6,4,2,3).$$
 Using the algorithm recursively, to compute ${\bf h}(T[1,2,3,4],4,T)$, we
start at vertex $1$ and first stop at $3$. By operation~\eqref{eq:al1}, we
have  
\begin{align*}
{\bf h}(T[1,2,3],3,T)=&{\bf h}(T[1,3],3,T)\ast{\bf h}(T[2,3],3,T)\\
=&({\bf 1_{7}} A_T)\ast({\bf 1_{7}} A_T)
=(1,1,9,9,4,1,1). 
\end{align*}
We continue to walk from vertex $3$ and stop at $4$, by
operation~\eqref{eq:al2} we get  
$$
{\bf h}(T[1,2,3,4],4,T)={\bf h}(T[1,2,3],3,T)A_T=(9,9,11,14,10,4,9). 
$$
Now again by operation \eqref{eq:al1} we have 
\begin{align*}
{\bf h}(T[1,2,3,4,5,6],4,T)=&{\bf h}(T[1,2,3,4],4,T)\ast{\bf h}(T[6,5,4],4,T)\\
=&(27,27,55,84,40,8,27).
\end{align*}
To finish the computing, we continue to walk from $4$ and at last stop at $7$
and  apply~\eqref{eq:al2} again we get   
\begin{align*}
{\bf h}(T,7,T)={\bf h}(T[1,2,3,4,5,6],4,T)A_T
=(55,55,138,122,92,40,84).
\end{align*}
Thus $\hom(T,T)=\lVert{\bf h}(T,7,T)\Vert=586.$
\end{Exa}

\noindent {\bf Alternative way: recursions.} There is an alternative way to
think to the Tree-walk algorithm, namely we consider two type of recursion
steps. 
\bigskip

\textbf{Recursion 1.} If we have a tree $T$ with a non-leaf root vertex $v$,
then we can decompose $T$ to $T_1\cup T_2$ such that $V(T_1)\cap
V(T_2)=\{v\}$, and $T_1$ and $T_2$ are strictly smaller than $T$.  In this case
$${\bf h}(T,v,G)={\bf h}(T_1,v,G)\ast {\bf h}(T_2,v,G).$$
\bigskip

\textbf{Recursion 2.} If we have a tree $T$ with a root vertex $v$ which is a
leaf with the unique neighbor $u$ then
$${\bf h}(T,v,G)={\bf h}(T-v,u,G)A,$$
where $A$ is the adjacency matrix of $G$.
\bigskip

Hence we use Recursion 1 or Recursion 2 according to the root vertex $v$ is a
non-leaf or a leaf. In most of the proofs we simply check whether some
property of the vector ${\bf h}(T,v,G)$ remains valid after applying Recursion
1 and Recursion 2. 

\section{Graph homomorphisms from trees}
\label{arb:graph}

\subsection{Markov chains and homomorphisms}

\begin{Th} \label{markov}
Let $G$ be a graph and let $P=(p_{ij})$ be a Markov chain on $G$:
$$\sum_{j\in N(i)}p_{ij}=1\ \ \ \ \mbox{for all}\ \ i\in V(G),$$
where $p_{ij}\geq 0$ and $p_{ij}=0$ if $(i,j)\notin E(G)$.
Let $Q=(q_i)$ be the stationary distribution of $P$:
$$\sum_{j\in N(i)}q_jp_{ji}=q_i \ \ \ \ \mbox{for all}\ \ i\in V(G).$$
Let us define the following entropies:
$$H(Q)=\sum_{i\in V(G)}q_i\log \frac{1}{q_i},$$
and
$$H(D|Q)=\sum_{i\in V(G)}q_i\log d_i,$$
where $d_i$ is the degree of the vertex $i$, and let
$$H(P|Q)=\sum_{i\in V(G)}q_i\left(\sum_{j\in N(i)}p_{ij}\log
\frac{1}{p_{ij}}\right).$$ 
Let $T_m$ be a tree with $\ell$ leaves on $m$ vertices, where $m\geq 3$. Then
$$\hom(T_m,G)\geq \exp\biggl(H(Q)+\ell H(D|Q)+(m-1-\ell)H(P|Q)\biggr).$$
\end{Th}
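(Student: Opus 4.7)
The plan is to apply the entropy method: I would construct a random graph homomorphism $f\colon V(T_m)\to V(G)$ whose Shannon entropy is exactly
\[
H(f)=H(Q)+\ell H(D|Q)+(m-1-\ell)H(P|Q),
\]
and then conclude via the standard inequality $H(f)\leq \log|\mathrm{supp}(f)|\leq \log\hom(T_m,G)$.

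Since $m\geq 3$, the tree $T_m$ has at least one non-leaf vertex. I would root $T_m$ at such a vertex $r$; this choice is essential, because it guarantees that the parent of every leaf is an internal vertex. Writing $p(v)$ for the parent of $v\neq r$, the random map $f$ is defined recursively, traversing $T_m$ outwards from $r$: sample $f(r)\sim Q$; for every non-root, non-leaf vertex $v$ with parent $u$, sample $f(v)$ from the row $(p_{f(u),j})_{j\in V(G)}$ of the Markov chain; for every leaf $v$ with parent $u$, sample $f(v)$ uniformly at random from $N(f(u))$ in $G$. Because $p_{uj}>0$ forces $uj\in E(G)$ and the uniform step at leaves stays inside $N(f(u))$, the output is always a homomorphism, so $\mathrm{supp}(f)\subseteq\Hom(T_m,G)$.

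To evaluate $H(f)$, I would order the vertices in a parent-before-child fashion as $v_1=r,v_2,\dots,v_m$ and apply the chain rule. The tree-structured branching construction makes $f(v_i)$ conditionally independent of $f(v_1),\dots,f(v_{i-1})$ given $f(p(v_i))$, so
\[
H(f)=H(f(r))+\sum_{i=2}^{m}H\bigl(f(v_i)\,\big|\,f(p(v_i))\bigr).
\]
The key marginal claim is that $f(v)\sim Q$ for every internal vertex $v$, which follows by induction on $\mathrm{dist}(r,v)$: only $P$-transitions are used along the root-to-$v$ path, and $Q$ is a fixed point of $P$. Granted this, the root contributes $H(Q)$; each of the $m-1-\ell$ non-root internal vertices contributes $H(P|Q)$ by the definition of that quantity; and each of the $\ell$ leaves $v$ contributes
\[
H\bigl(f(v)\,\big|\,f(p(v))\bigr)=\mathbb{E}\bigl[\log d_{f(p(v))}\bigr]=\sum_{u\in V(G)} q_u\log d_u=H(D|Q),
\]
using that $p(v)$ is internal and therefore has marginal $Q$. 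Summing these contributions yields exactly the claimed value of $H(f)$, and then $H(f)\leq\log\hom(T_m,G)$ finishes the proof.

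I do not expect any substantial obstacle; the proof is a clean instance of the entropy method, and the whole argument boils down to the single marginal claim above. The only delicate bookkeeping is choosing $r$ to be a non-leaf, which simultaneously ensures that the parent of every leaf is internal (so the leaf contribution is genuinely $H(D|Q)$ rather than a weighted average against some non-stationary marginal) and that the vertex count decomposes as $m=1+(m-1-\ell)+\ell$; both require the hypothesis $m\geq 3$.
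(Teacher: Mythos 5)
Your proof is correct, and it is precisely the entropy-theoretic argument that the authors describe informally in the Remark immediately following Theorem~\ref{markov}, rather than the argument they actually write out in the proof. The paper's written proof is an induction on $m$ for the weighted geometric mean $F(T_m(v),G)=\prod_i a_i^{q_i}$ of the hom-vector: Recursion~1 (a non-leaf root) makes $F$ multiplicative via the Hadamard product, Recursion~2 (a leaf root) is handled with the weighted AM--GM inequality, and stationarity of $Q$ is used to telescope the exponent; the final step $\hom(T_m,G)=\sum a_i\geq\exp(H(Q))\,F(T_m(v),G)$ is one more application of weighted AM--GM. Your argument replaces all of that bookkeeping with the single invocation of $H(f)\leq\log|\mathrm{supp}(f)|$ for the branching random homomorphism, and the induction collapses to the one-line marginal claim $f(v)\sim Q$ for internal $v$. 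The two approaches are genuinely the same inequality in disguise --- the weighted AM--GM step in the paper is exactly the entropy inequality applied to the one-step leaf sampling --- but yours is cleaner to state and matches the authors' own description of ``how the theorem should be understood.'' Two small points to make explicit if you write this up formally: the chain-rule reduction $H(f(v_i)\mid f(v_1),\dots,f(v_{i-1}))=H(f(v_i)\mid f(p(v_i)))$ requires the observation that in the branching construction $f(v_i)$ is conditionally independent of the already-revealed values given $f(p(v_i))$ (you say this, but it deserves a sentence); and the leaf step tacitly uses $d_{f(p(v))}\geq 1$, which holds because $\sum_{j\in N(i)}p_{ij}=1$ already forbids isolated vertices in $G$.
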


\begin{proof} Let $v$ be  a root of $T$.
Let $a_i$ be the number of homomorphisms of $T_m$ into $G$ such that the root
vertex $v$ goes into the vertex $i\in V(G)$. Let  
$$F(T_m(v),G)=\prod_{i=1}^na_i^{q_i}.$$

We will show by induction on $m$ that
$$F(T_m(v),G)\geq \exp \left(\ell^* H(D|Q)+(m-1-\ell^*)H(P|Q)\right),$$
where $\ell^*$ is the number of leaves different from $v$, so it is $\ell$ if
$v$ is not a leaf and $\ell-1$ if $v$ is  a leaf. Note that 
$$F(K_2(v),G)=\exp H(D|Q).$$

If $v$ is not a leaf of $T_m$, then we can decompose $T_m$ to $T_1(v)$ and
$T_2(v)$. 
Then
$$F(T_m(v),G)=F(T_1(v),G)F(T_2(v),G)$$
because of the Hadamard-products of the hom-vectors.
From this the claim follows immediately by induction.

If $v$ is a leaf of $T_m$ with the unique neighbor $u$, then let
$${\bf h}(T_m-v,u,G)=(b_1,\dots ,b_n).$$
So $a_i=\sum_{j\in N(i)}b_j$.

For positive numbers $r_1,\dots ,r_t$ and positive weights $w_1,\dots ,w_t$
with $\sum_{i=1}^tw_i=1$, the weighted AM-GM inequality says that
$$r_1+\dots+r_t=w_1\left(\frac{r_1}{w_1}\right)+\dots
+w_t\left(\frac{r_t}{w_t}\right)\geq \left(\frac{r_1}{w_1}\right)^{w_1}\dots 
\left(\frac{r_t}{w_t}\right)^{w_t}=$$ 
$$=\exp \left(\sum_{i=1}^tw_i\log
\frac{1}{w_{i}}\right)\prod_{i=1}^tr_i^{w_i}.$$ 
Hence
$$F(T_m(v),G)=\prod_{i=1}^na_i^{q_i}=\prod_{i=1}^n\left(\sum_{j\in
  N(i)}b_j\right)^{q_i}\geq \prod_{i=1}^n\left(\prod_{j\in
  N(i)}\left(\frac{b_j}{p_{ij}}\right)^{p_{ij}}\right)^{q_i}=$$
$$=\prod_{i=1}^n\left(\prod_{j\in
  N(i)}\left(\frac{1}{p_{ij}}\right)^{p_{ij}q_i}\right)\prod_{i=1}^nb_i^{\sum_{j\in
    N(i)}p_{ji}q_j}=\prod_{i=1}^n\left(\prod_{j\in
  N(i)}\left(\frac{1}{p_{ij}}\right)^{p_{ij}q_i}\right)\prod_{i=1}^nb_i^{q_i}$$
In the last step we used that $Q$ is a stationary distribution with respect to
$P$. Hence 
$$F(T_m(v),G)\geq \exp(H(P|Q))F((T_m-v)(u),G).$$
Now the claim follows by induction.

To finish the proof of the theorem, we only have to choose a nonleaf root and
use that 
$$\hom(T_m,G)=\sum_{i=1}^na_i\geq \exp(H(Q))F(T_m(v),G).$$
\end{proof}

\begin{Rem} Note that the inequality $H(D|Q)\geq H(P|Q)$ always
  hold. Consequently, 
$$\hom(T_m,G)\geq \exp(H(Q)+(m-1)H(P|Q)).$$
As Theorem~\ref{markov} suggests, this is an inequality for entropies and
indeed, it can be proved by this way. By $P$ and $Q$, we defined a
distribution on the set of homomorphisms: we choose a root according to $Q$,
then we choose every nonleaf new vertex according to $P$ and finally we choose
the leaves uniformly. The entropy of this distribution is
exactly $H(Q)+\ell H(D|Q)+(m-1-\ell)H(P|Q)$ since every nonleaf vertex has
distribution $Q$. Note that this entropy is smaller than the entropy
of the uniform distribution, that is, $\log \hom(T_m,G)$. For basic facts about entropy, see for example~\cite{ct}. 
\end{Rem}

\begin{Th} \label{spectral-markov} 
Let $G$ be a connected graph on the vertex set 
$\{1,2,\dots ,n\}$ and let  $\lambda$ be the largest eigenvalue of the
adjacency matrix of the graph $G$. Let $\underline{y}$ be a positive
eigenvector of unit length corresponding to $\lambda$. Let $q_i=y_i^2$. Then
for any rooted tree $T_m$ on $m$ vertices we have
$$\hom(T_m,G)\geq \exp(H_{\lambda}(G))\lambda^{m-1},$$
where 
$$H_{\lambda}(G)=\sum_{i=1}^nq_i\log \frac{1}{q_i}$$ 
is the spectral entropy of the graph $G$.
\end{Th}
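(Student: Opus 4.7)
The plan is to specialize Theorem~\ref{markov} by choosing a Markov chain built from the Perron eigenvector $\underline{y}$ of $G$. Concretely, I would set
$$p_{ij}=\frac{y_j}{\lambda y_i}\quad\text{for }(i,j)\in E(G),$$
and take $q_i=y_i^2$ as in the statement.

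The first step is to verify that $P=(p_{ij})$ is an honest Markov chain and that $Q=(q_i)$ is its stationary distribution. Both are immediate consequences of $A_G\underline{y}=\lambda\underline{y}$: one computes $\sum_{j\in N(i)}p_{ij}=\frac{1}{\lambda y_i}\sum_{j\in N(i)}y_j=1$, and symmetrically $\sum_{j\in N(i)}q_jp_{ji}=\frac{y_i}{\lambda}\sum_{j\in N(i)}y_j=y_i^2=q_i$. Moreover $\sum_iq_i=1$ since $\underline{y}$ has unit length.

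The heart of the argument is to show that $H(P|Q)=\log\lambda$ for this choice. Expanding the definition gives
$$H(P|Q)=\sum_i\frac{y_i}{\lambda}\sum_{j\in N(i)}y_j\bigl(\log\lambda+\log y_i-\log y_j\bigr).$$
The $\log\lambda$-piece collapses to $\log\lambda$ using the eigenvector equation and $\sum_iy_i^2=1$. For the remaining two pieces I would pair terms across each unordered edge $\{i,j\}\in E(G)$ to observe that both sums equal $\frac{1}{\lambda}\sum_{\{i,j\}\in E(G)}y_iy_j(\log y_i+\log y_j)$, so they cancel. This symmetry cancellation is the only slightly delicate step, and it is precisely the payoff of choosing $p_{ij}\propto y_j/y_i$.

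Combining $H(P|Q)=\log\lambda$ with the inequality $H(D|Q)\geq H(P|Q)$ noted in the remark after Theorem~\ref{markov} yields
$$H(Q)+\ell H(D|Q)+(m-1-\ell)H(P|Q)\geq H_\lambda(G)+(m-1)\log\lambda,$$
and Theorem~\ref{markov} then produces the advertised bound $\hom(T_m,G)\geq\exp(H_\lambda(G))\lambda^{m-1}$. The small cases $m\leq 2$, which lie outside the range of Theorem~\ref{markov}, fall to direct estimates: $m=1$ reduces to $H_\lambda(G)\leq\log n$ by Jensen, and $m=2$ to a weighted AM-GM inequality applied to $\hom(K_2,G)=\sum_id_i$.
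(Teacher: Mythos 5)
Your proposal is correct and follows essentially the same route as the paper: choose $p_{ij}=y_j/(\lambda y_i)$, verify that $Q=(y_i^2)$ is the stationary distribution (this is the reversibility computation $q_ip_{ij}=q_jp_{ji}=y_iy_j/\lambda$), compute $H(P\mid Q)=\log\lambda$ by the same cancellation-over-edges argument, and feed the result into Theorem~\ref{markov} together with the remark $H(D\mid Q)\geq H(P\mid Q)$. The only addition you make that the paper leaves implicit is the explicit treatment of $m\leq 2$; that is a sensible patch since Theorem~\ref{markov} is stated for $m\geq 3$, and your reductions ($H_\lambda(G)\leq\log n$ by concavity of $\log$ for $m=1$; weighted AM--GM giving $\sum_i d_i\geq\prod_i(d_i/q_i)^{q_i}\geq\exp(H_\lambda(G))\lambda$ for $m=2$) are correct.
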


\begin{proof} We will use Theorem~\ref{markov}. Let $p_{ij}=\frac{y_j}{\lambda
    y_i}$. Since $\underline{y}$ is a positive eigenvector, we have $p_{ij}>0$. 
For all $i$ we have $\lambda y_i=\sum_{j\in N(i)}y_j$, thus $\sum_{j\in
  N(i)}p_{ij}=1$. For $q_i=y_i^2$ we have 
$$q_ip_{ij}=y_i^2\frac{y_j}{\lambda y_i}=\frac{1}{\lambda}y_iy_j=q_jp_{ji}.$$
Hence
$$\sum_{i\in N(j)}q_jp_{ji}=\sum_{i\in N(j)}q_ip_{ij}=q_i.$$
This means that $P=(p_{ij})$ is a Markov chain with stationary distribution
$Q=(q_i)$. The conditional entropy
$$H(P|Q)=\sum_{i\in V(G)}q_i\left(\sum_{j\in N(i)}p_{ij}\log
\frac{1}{p_{ij}}\right)= 
\sum_{i\in V(G)}y_i^2\left(\sum_{j\in N(i)}\frac{y_j}{\lambda y_i}\log
\frac{\lambda y_i}{y_j}\right)=$$
$$=\sum_{\{i,j\}\in E(G)} \frac{y_iy_j}{\lambda}\left(2\log \lambda+\log
\frac{y_i}{y_j}+\log \frac{y_j}{y_i}\right)=\log (\lambda)
\frac{1}{\lambda}\sum_{(i,j)\in E(G)}y_iy_j=\log \lambda.$$
Hence the result follows from Theorem~\ref{markov}.
\end{proof}

\begin{Rem} A Markov chain is called reversible if $q_ip_{ij}=q_jp_{ji}$ for
  all $i,j\in V(G)$. As we have seen, the Markov chain constructed in the
  previous proof is reversible. It is not hard to show that on trees every
  Markov chains are reversible.
\end{Rem}

\begin{Rem}
Theorem~\ref{spectral-markov} is the best possible in the sense that there
cannot be a larger number than $\lambda$ in such a statement since
$$\hom(P_m,G)\leq n\lambda^{m-1}.$$
Indeed, 
$$\frac{\hom(P_m,G)}{n}=\frac{{\bf 1_{n}}^TA^{m-1}{\bf 1_{n}}}{{\bf 1_{n}}^T{\bf
    1_{n}}} \leq \max_{v\neq
    \underline{0}}\frac{v^TA^{m-1}v}{v^Tv}=\lambda_{\max}(A^{m-1})=\lambda^{m-1}.$$

Note that we can deduce that if $(T_m)_{m=1}^{\infty}$ is a sequence of trees
such that $T_m$ has $m$ vertices then
$$\liminf_{m\to \infty}\hom(T_m,G)^{1/m}\geq
\liminf_{m\to \infty}\hom(P_m,G)^{1/m}=\lambda.$$
This result could have been deduced as well from a theorem of B. Rossman and
E. Vee~\cite{br} claiming that
$$\hom(T_m,G)\geq \hom(C_m,G),$$
where $C_m$ is the cycle on $m$ vertices. In fact, this was proved for
directed trees and cycles, but it implies the inequality for undirected tree
and cycle. This result can also be deduced from Theorem 3.1 of \cite{sk}. 
\end{Rem}

\begin{Th} \label{degree-markov}
Let $G$ be a graph on the vertex set $\{1,2,\dots ,n\}$ with
$e$ edges and with degree sequence $(d_1,\dots ,d_n)$. Then for any tree $T_m$
on $m$ vertices we have 
$$\hom(T_m,G)\geq 2e\cdot  C^{m-2},$$
where
$$C=\left(\prod_{i=1}^nd_i^{d_i}\right)^{1/2e}.$$
\end{Th}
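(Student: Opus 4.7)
The plan is to obtain Theorem~\ref{degree-markov} as a direct corollary of Theorem~\ref{markov} by plugging in the simple random walk on $G$. Concretely, I would take $p_{ij}=1/d_i$ for every $\{i,j\}\in E(G)$ (and $p_{ij}=0$ otherwise). A one-line calculation $\sum_{j\in N(i)}p_{ij}=d_i\cdot (1/d_i)=1$ shows that $P$ is indeed a Markov chain, and the well-known stationary distribution of the simple random walk is
$$q_i=\frac{d_i}{2e},$$
which one verifies by $\sum_{j\in N(i)}q_jp_{ji}=\sum_{j\in N(i)}\frac{d_j}{2e}\cdot\frac{1}{d_j}=\frac{d_i}{2e}=q_i$.

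The first key observation is that for this particular choice of $P$ the two conditional entropies appearing in Theorem~\ref{markov} coincide. Indeed, since $p_{ij}=1/d_i$ is constant over $j\in N(i)$,
$$H(P|Q)=\sum_{i\in V(G)}q_i\sum_{j\in N(i)}\frac{1}{d_i}\log d_i=\sum_{i\in V(G)}q_i\log d_i=H(D|Q).$$
Hence in the lower bound of Theorem~\ref{markov} the number of leaves $\ell$ becomes irrelevant: for \emph{any} tree $T_m$ on $m$ vertices,
$$\hom(T_m,G)\geq \exp\bigl(H(Q)+(m-1)H(D|Q)\bigr).$$

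It remains to evaluate the exponent. Substituting $q_i=d_i/(2e)$ and splitting $\log(2e/d_i)=\log(2e)-\log d_i$,
$$H(Q)=\sum_{i=1}^n\frac{d_i}{2e}\log\frac{2e}{d_i}=\log(2e)-\sum_{i=1}^n\frac{d_i}{2e}\log d_i,$$
while $H(D|Q)=\sum_{i=1}^n\frac{d_i}{2e}\log d_i=\frac{1}{2e}\log\prod_{i=1}^n d_i^{d_i}=\log C$. Combining these,
$$H(Q)+(m-1)H(D|Q)=\log(2e)+(m-2)\log C,$$
so exponentiating yields exactly $\hom(T_m,G)\geq 2e\cdot C^{m-2}$, as required.

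There is no genuine obstacle here: once Theorem~\ref{markov} is in hand, the content of the proof is the recognition that the simple random walk is the correct Markov chain to use, because it equalizes $H(D|Q)$ with $H(P|Q)$ and thereby removes the dependence on the leaf count $\ell$ (which is necessary, as the right hand side involves only $m$ and the degree sequence). The rest is a bookkeeping computation in which $\log(2e)$ and $\log\prod_i d_i^{d_i}$ fall out of $H(Q)$ and $H(D|Q)$.
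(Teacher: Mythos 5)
Your proposal is correct and takes essentially the same route as the paper: both use the simple random walk $p_{ij}=1/d_i$ with stationary distribution $q_i=d_i/2e$, exploit the fact that $H(P|Q)=H(D|Q)=\log C$ for this chain (so the leaf count $\ell$ drops out of Theorem~\ref{markov}), and then recover $\log(2e)$ from $H(Q)+H(P|Q)$. The only cosmetic difference is that the paper computes $H(Q)+H(P|Q)$ directly as a sum over edges while you compute $H(Q)$ alone and add $(m-1)H(D|Q)$; the arithmetic is equivalent.
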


\begin{proof} Let us consider the following classical Markov chain:
  $p_{ij}=\frac{1}{d_i}$ if $j\in N(i)$. The stationary distribution is
  $q_i=\frac{d_i}{2e}$. Note that
$$H(P|Q)= \sum_{i\in V(G)}q_i\left(\sum_{j\in N(i)}p_{ij}\log
  \frac{1}{p_{ij}}\right)= 
\sum_{i\in V(G)}q_i\log d_i=\frac{1}{2e}\sum_{i\in V(G)}d_i\log d_i=\log C$$
and 
$$H(Q)+H(P|Q)=\sum_{(i,j)\in E(G)}q_ip_{ij}\log
\frac{1}{q_ip_{ij}}=\sum_{(i,j)\in E(G)}\frac{1}{2e}\log (2e)=\log (2e).$$
Hence the result follows from Theorem~\ref{markov}.
\end{proof}

\begin{Def} The homomorphism density $t(H,G)$ is defined as follows:
$$t(H,G)=\frac{\hom(H,G)}{|V(G)|^{|V(H)|}}.$$
This is the probability that a random map is a homomorphism.
\end{Def}

Sidorenko's conjecture says that
$$t(H,G)\geq t(K_2,G)^{e(H)}$$
for every bipartite graph $H$ with $e(H)$ edges. It is known that Sidorenko's
conjecture~\cite{si2} is true for trees. By now, there are many proofs for
this particular case of Sidorenko's conjecture: see \cite{cj,xl} and it can be
deduced as well from Theorem 3.1 of~\cite{sk}. Below we give a new proof for
this fact. 

\begin{Th} For any tree $T_m$ on $m$ vertices and a graph $G$ we have
$$t(T_m,G)\geq t(K_2,G)^{m-1}.$$
\end{Th}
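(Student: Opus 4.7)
The plan is to reduce the claim to the lower bound already established in Theorem~\ref{degree-markov} and then compare the constant $C$ there with $2e/n$ using convexity. Unraveling the definition of $t$ with $n=|V(G)|$ and $e=e(G)$, the inequality to prove is
$$\hom(T_m,G)\geq \frac{(2e)^{m-1}}{n^{m-2}}.$$

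First I would apply Theorem~\ref{degree-markov}, which gives
$$\hom(T_m,G)\geq 2e\cdot C^{m-2},\qquad C=\left(\prod_{i=1}^n d_i^{d_i}\right)^{1/2e}.$$
So it suffices to show $C\geq 2e/n$, or equivalently, taking logarithms and multiplying by $2e$,
$$\sum_{i=1}^n d_i\log d_i\;\geq\;2e\,\log\frac{2e}{n}.$$

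This is exactly Jensen's inequality applied to the convex function $f(x)=x\log x$ with the uniform weights $1/n$ on the degree sequence $d_1,\dots,d_n$: since $\frac{1}{n}\sum_i d_i=\frac{2e}{n}$, convexity yields
$$\frac{1}{n}\sum_{i=1}^n f(d_i)\;\geq\; f\!\left(\frac{2e}{n}\right)=\frac{2e}{n}\log\frac{2e}{n},$$
and multiplying through by $n$ gives the desired bound $\sum d_i\log d_i\geq 2e\log(2e/n)$. Combining this with the estimate from Theorem~\ref{degree-markov} yields
$$\hom(T_m,G)\geq 2e\cdot\left(\frac{2e}{n}\right)^{m-2}=\frac{(2e)^{m-1}}{n^{m-2}},$$
which after dividing by $n^m$ is the claim $t(T_m,G)\geq t(K_2,G)^{m-1}$.

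There is no real obstacle here once Theorem~\ref{degree-markov} is at hand: the only ingredient beyond it is the one-line application of Jensen to the degree sequence, which replaces the geometric-type constant $C$ with the arithmetic mean $2e/n$. The only thing worth noting is that the bound is independent of the shape of $T_m$, using only that $T_m$ has $m-1$ edges, which is consistent with Sidorenko's conjecture asserting equality of exponents across bipartite $H$ with the same edge count.
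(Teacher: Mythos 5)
Your proof is correct and matches the paper's argument essentially line for line: both reduce to Theorem~\ref{degree-markov} and then use Jensen's inequality (the paper phrases it as ``convexity of $x\log x$'') to show $C\geq 2e/n$, concluding with the same algebraic simplification.
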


\begin{proof} Let $|V(G)|=n$. The theorem will immediately follows form
  Theorem~\ref{degree-markov}. By convexity of the function $x\log x$ we have
$$\frac{1}{2e}\sum_{i\in V(G)}d_i\log d_i\geq
  \frac{1}{2e}n\left(\frac{2e}{n}\log \frac{2e}{n}\right)=\log \frac{2e}{n}.$$
Hence
$$t(T_m,G)=\frac{\hom(T_m,G)}{n^{m}}\geq
\frac{1}{n^{m}}2e\left(\frac{2e}{n}\right)^{m-2}=\left(\frac{2e}{n^2}\right)^{m-1}=t(K_2,G)^{m-1}.$$      
\end{proof}

\subsection{Sidorenko's theorem on extremality of stars}
The objective of this section is to give a new proof for Theorem~\ref{of
stars} in order to keep this paper self-contained. This was proved originally
by Sidorenko \cite{si}. Our proof is very similar to the original one, but it
is slightly more elementary. 

Before we start the proof we will need two definitions and two lemmas.

\begin{Def} Let $M_u$ and $N_v$ be two rooted graphs with root vertices $u$
  and $v$, respectively. Then $M_u\circ_{u=v}N_v$ denotes the graph
  obtained from $M_u\cup N_v$ by identifying the vertices $u$ and $v$.
\end{Def}   

\begin{Lemma} Let $R_{u,v}$ be a graph with specified (not necessarily
distinct) vertices $u$ and $v$. Let $J_{u'}$ and $K_{v'}$ be two graphs with
root vertices $u'$ and $v'$. Finally, let the graphs $A,B$ and $C$ be obtained
from $R_{u,v},J_{u'},K_{v'}$ as follows:
$$A=(R_{u,v}\circ_{u=u'}J_{u'})\circ_{v=v'}K_{v'},$$
$$B=(R_{u,v}\circ_{u=u'}J_{u'})\circ_{u=u'}J_{u'},$$
$$C=(R_{u,v}\circ_{v=v'}K_{v'})\circ_{v=v'}K_{v'}.$$
(In other words, in $B$ and $C$ we attach two copies of the same graph at the
specified vertex.) Then for any graph $G$ we have
$$2\hom(A,G)\leq \hom(B,G)+\hom(C,G).$$
\end{Lemma}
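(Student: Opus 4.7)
The natural plan is to count homomorphisms into $G$ by conditioning on the images of the two specified vertices $u$ and $v$. For each ordered pair $(i,j)\in V(G)\times V(G)$, let
\[
r(i,j)=|\{\varphi\in\Hom(R_{u,v},G):\varphi(u)=i,\ \varphi(v)=j\}|,
\]
\[
s(i)=|\{\varphi\in\Hom(J_{u'},G):\varphi(u')=i\}|,\quad t(j)=|\{\varphi\in\Hom(K_{v'},G):\varphi(v')=j\}|.
\]
Since the operation $\circ_{u=u'}$ identifies the two root vertices and the graphs are otherwise disjoint, every homomorphism of the glued graph decomposes as a pair of compatible homomorphisms, where compatibility only requires agreement on the identified vertex.

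With this notation I would immediately read off
\[
\hom(A,G)=\sum_{i,j\in V(G)} r(i,j)\,s(i)\,t(j),
\]
\[
\hom(B,G)=\sum_{i,j\in V(G)} r(i,j)\,s(i)^{2},\qquad \hom(C,G)=\sum_{i,j\in V(G)} r(i,j)\,t(j)^{2}.
\]
In $B$ the vertex $v$ is still a free endpoint of $R_{u,v}$ and the attached double copy of $J_{u'}$ at $u$ contributes the factor $s(i)^2$; the case of $C$ is symmetric. The desired inequality then amounts to showing
\[
\sum_{i,j} r(i,j)\bigl(s(i)^{2}+t(j)^{2}-2s(i)t(j)\bigr)\geq 0.
\]

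But the bracket is exactly $(s(i)-t(j))^{2}\geq 0$, and the weights $r(i,j)$ are nonnegative, so the sum is manifestly nonnegative. Equivalently this is the AM-GM inequality $2xy\leq x^{2}+y^{2}$ applied termwise to $x=s(i)$, $y=t(j)$ with weight $r(i,j)$. The only step requiring care is the combinatorial identity expressing $\hom(A,G),\hom(B,G),\hom(C,G)$ as the three sums above, which I would justify by observing that the fibers over the image of the identified vertex factor independently; once this bookkeeping is correctly set up, the rest is a one-line application of the square inequality and is not the main obstacle.
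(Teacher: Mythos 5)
Your proof is correct and is essentially the same as the paper's: you introduce the same fiber counts (the paper writes $h(R_{u,v},i,j)$, $h(J_{u'},i)$, $h(K_{v'},j)$ for your $r(i,j)$, $s(i)$, $t(j)$), derive the same three sum formulas for $\hom(A,G)$, $\hom(B,G)$, $\hom(C,G)$, and conclude by the same termwise square inequality. There is nothing to add.
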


\begin{proof} Let $i,j\in V(G)$ and let $h(R_{u,v},i,j)$ denote the number of
  homomorphisms of $R_{u,v}$ to $G$ where $u$ goes to $i$ and $v$ goes to $j$.
We similarly define  $h(J_{u'},i)$ and $h(K_{v'},j)$. Then
$$\hom(A,G)=\sum_{i,j\in V(G)}h(R_{u,v},i,j)h(J_{u'},i)h(K_{v'},j).$$
Similarly,
$$\hom(B,G)=\sum_{i,j\in V(G)}h(R_{u,v},i,j)h(J_{u'},i)^2,$$
and 
$$\hom(C,G)=\sum_{i,j\in V(G)}h(R_{u,v},i,j)h(K_{v'},j)^2.$$
Hence
\begin{align*}
&\hom(B,G)+\hom(C,G)-2\hom(A,G)\\
=&\sum_{i,j\in V(G)}
h(R_{u,v},i,j)(h(J_{u'},i)-h(K_{v'},j))^2\geq 0.
\end{align*}
We are done.
\end{proof}

\begin{Def} Let $d(u,v)$ be the distance of the vertices $u,v\in V(G)$. Then the
  {\em Wiener-index} $W(G)$ of a graph $G$ is defined as
$$W(G):=\sum_{u,v\in V(G)}d(u,v).$$
\end{Def} 

In our application $R_{u,v}$ will be a tree and $J_{u'}$ and $K_{v'}$ be the
trees on $2$ vertices. 
The following lemma about the Wiener-index is trivial.

\begin{Lemma} Let $R_{u,v}$ be a tree with distinct vertices $u$ and $v$.  Let
  $J_{u'}$ and $K_{v'}$ be two copies of the two-node trees with root vertices
  $u'$ and $v'$, respectively. Finally, let the graphs $A,B$ and $C$ be obtained
  from $R_{u,v},J_{u'},K_{v'}$ as in the former lemma.  
Then $2W(A)>W(B)+W(C)$.
\end{Lemma}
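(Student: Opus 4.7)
The plan is a direct computation: write $W(A), W(B), W(C)$ as $W(R_{u,v})$ plus correction terms coming from the two newly attached leaves, and observe that almost everything cancels.

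Let $n=|V(R_{u,v})|$ and, for $w\in V(R_{u,v})$, set $D(w)=\sum_{x\in V(R_{u,v})}d_{R}(w,x)$. Label the two new leaves in each graph: call them $a$ (attached at $u$) and $b$ (attached at $v$) in $A$, while $B$ has two leaves $b_1,b_2$ both attached at $u$ and $C$ has two leaves $c_1,c_2$ both attached at $v$. Since each of $A,B,C$ is obtained from $R_{u,v}$ by attaching leaves at existing vertices, distances inside $V(R_{u,v})$ are unchanged in all three graphs; each new leaf $\ell$ attached at $x\in\{u,v\}$ satisfies $d(\ell,w)=d_{R}(x,w)+1$ for every $w\in V(R_{u,v})$.

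Now split $W(X)$ (for $X\in\{A,B,C\}$) into three pieces: the pairs inside $V(R_{u,v})$, the ordered pairs with exactly one endpoint a new leaf, and the ordered pairs with both endpoints new leaves. The first piece is $W(R_{u,v})$ in each case. The second piece, a quick count using $d(\ell,w)=d_R(x,w)+1$, yields $2D(u)+2D(v)+4n$ for $A$, $4D(u)+4n$ for $B$, and $4D(v)+4n$ for $C$. For the third piece, $d_A(a,b)=d_R(u,v)+2$ while $d_B(b_1,b_2)=d_C(c_1,c_2)=2$, giving ordered contributions $2d_R(u,v)+4$, $4$, and $4$ respectively. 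Plugging in,
\begin{equation*}
2W(A)-W(B)-W(C)=\bigl(4d_R(u,v)+8\bigr)-8=4\,d_R(u,v).
\end{equation*}
Since $u\neq v$ in the tree $R_{u,v}$, we have $d_R(u,v)\geq 1$, so the right-hand side is strictly positive, which is exactly the claim.

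There is no real obstacle here; the whole content is a bookkeeping calculation, and the mild subtlety is simply making sure to sum over \emph{ordered} pairs consistently (so that the factors of $2$ and $4$ in the leaf–old and leaf–leaf contributions line up) and to notice that all the $D(u),D(v)$ and $n$ terms cancel, leaving only the ``distance between the attachment points'' term $4d_R(u,v)$. The hypothesis that $u\neq v$ is the only place where strictness of the inequality is used.
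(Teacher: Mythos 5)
Your proof is correct. The paper offers no proof of this lemma at all, dismissing it in one line as ``trivial,'' so there is nothing to compare against; your direct bookkeeping is the natural verification the authors had in mind. All the steps check out: attaching a pendant leaf at a vertex $x$ of a tree does not change any distance between old vertices and gives $d(\ell,w)=d_R(x,w)+1$ for every old $w$, so the internal piece $W(R_{u,v})$ cancels in $2W(A)-W(B)-W(C)$, the linear terms $D(u),D(v),n$ cancel, and the only surviving contribution is from the single leaf--leaf pair, yielding $4\,d_R(u,v)$ under the ordered-pair convention (or $2\,d_R(u,v)$ under the unordered convention --- either way strictly positive since $u\neq v$).
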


\begin{proof}[Proof of Theorem~\ref{of stars}]
Let $\mathcal{T}_G$ be the set of those trees $F$ on $m$ vertices for which
$\hom(F,G)$ is maximal. Let $T\in \mathcal{T}_G$ be the tree for which $W(T)$
is minimal. We show that $T=S_m$. Assume for contradiction that $T\neq
S_n$. Then $T$ has two leaves, $a$ and $b$ such that $d(a,b)\geq 3$. Let $u$
and $v$ be the unique neighbors of $a$ and $b$, respectively. Then $u\neq
v$. Let $R_{u,v}=T-\{a,b\}$, $J_{u'}=\{u',a\}$ and $K_{v'}=\{v',b\}$. Then
$$A=(R_{u,v}\circ_{u=u'}J_{u'})\circ_{v=v'}K_{v'}=T.$$
As in the lemmas, let
$$B=(R_{u,v}\circ_{u=u'}J_{u'})\circ_{u=u'}J_{u'},$$
$$C=(R_{u,v}\circ_{v=v'}K_{v'})\circ_{v=v'}K_{v'}.$$
Note that $B$ and $C$ are also trees on $m$ vertices. By the Lemma we have
$$2\hom(A,G)\leq \hom(B,G)+\hom(C,G).$$
Since $A=T\in \mathcal{T}_G$, then $\hom(B,G)+\hom(C,G)\leq 2\hom(A,G)$.
So $\hom(A,G)=\hom(B,G)=\hom(C,G)$ implying that $B,C\in \mathcal{T}_G$ as
well.   But then $2W(T)>W(B)+W(C)$, so one of them has strictly smaller
Wiener-index than $T$, this contradicts the choice of $T$. 
Hence $T$ must be $S_m$.
\end{proof}

\begin{Rem} After all, it is a natural question whether it is true or not that
$$\hom(P_m,G)\leq \hom(T_m,G)$$
for any tree $T_m$ on $m$ vertices. Surprisingly, the answer is no! It was
already known to A. Leontovich \cite{al}. It turns out that even if restrict
$G$ to be tree there is a counterexample. Let $E_7$
be the tree obtained from $P_6$ by putting a pendant edge to the third vertex
of the path. Then there is a tree $T$ for which
$$\hom(P_7,T)> \hom(E_7,T).$$
The following tree $T$ is suitable: let $T=T(k_1,k_2,k_3)$ be the tree where
the root vertex $v_0$ have $k_1$ neighbors, all of its neighbors has $k_2+1$
neighbors and the vertices having distance $2$ from $v_0$ have $k_3+1$
neighbors. If we choose $k_1,k_2,k_3$ such that $k_2\ll k_1\ll k_3\ll k_1k_2$
(for instance $k_i=k^{\alpha_i}$, where
$\alpha_2<\alpha_1<\alpha_3<\alpha_1+\alpha_2$ and  $k$ is large), then  
$$\hom(P_7,T)-\hom(E_7,T)=k_1^2k_2^2k_3^2+o(k_1^2k_2^2k_3^2).$$
\end{Rem}

\section{Tree-walks on trees}
\label{treewalks}
The main purpose of this section is to prove Theorem~\ref{of starlike}. We
shall give an inductive proof of Theorem~\ref{th:1} which can be 
generalized to tree-walks by tree-walk algorithm.

We first need  some notations. 
Let $T$ be a tree and $T'=KC(T,p_{0},p_{k})$ its KC-transformation with
respect to a path $P$ of length $k$, a path with vertices labeled
consecutively with $p_0, p_1, \ldots, p_k$. We denote by $A$ and $B$ the
components of $p_{0}$ and $p_{k}$ in the subgraph of $T$ by deleting all the
edges of $P$. Let $A'$, $B'$ and $P'$ be the components of $T'$ corresponding
with components $A$, $B$ and $P$ under the KC-transformation,
respectively. The vertices of the path $P'$ will be labeled consecutively
with $p'_0, p'_1,\dots, p'_k$, where $p'_i$ is corresponding to $p_i$ for
$0\leq i\leq k$. So $p_0\in A, p_k\in B$ in $T$, and $p'_0\in A',B'$ in
$T'$.

\begin{Lemma} \label{funny} Let $a_1,a_2,b_1,b_2,c_1,c_2,d_1,d_2$ be positive
numbers satisfying the inequalities: $a_i\geq \max(c_i,d_i)$,  $a_i+b_i\geq
c_i+d_i$  for $i=1,2$. Then $a_1a_2\geq \max(c_1c_2,d_1d_2)$ and
$a_1a_2+b_1b_2\geq c_1c_2+d_1d_2$. 
\end{Lemma}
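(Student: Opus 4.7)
The first inequality $a_1 a_2 \geq \max(c_1 c_2, d_1 d_2)$ is immediate from the pointwise bound $a_i \geq \max(c_i, d_i) > 0$, so the real task is the additive inequality $a_1 a_2 + b_1 b_2 \geq c_1 c_2 + d_1 d_2$. My plan is to first rewrite the hypothesis $a_i + b_i \geq c_i + d_i$ as $b_i \geq c_i + d_i - a_i$ (as a real inequality, not necessarily one between nonnegatives), and then exploit the algebraic identity
$$a_1 a_2 + (c_1 + d_1 - a_1)(c_2 + d_2 - a_2) - c_1 c_2 - d_1 d_2 = (a_1 - c_1)(a_2 - d_2) + (a_1 - d_1)(a_2 - c_2),$$
which is checked by a direct expansion of both sides. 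The right-hand side is nonnegative because each factor is nonnegative by the hypothesis $a_i \geq c_i, d_i$.

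From here I would split into two cases. If $c_i + d_i - a_i \geq 0$ for both $i=1,2$, then $b_i \geq c_i + d_i - a_i \geq 0$, so multiplying the two lower bounds gives $b_1 b_2 \geq (c_1 + d_1 - a_1)(c_2 + d_2 - a_2)$, and the identity above immediately yields $a_1 a_2 + b_1 b_2 \geq c_1 c_2 + d_1 d_2$. If, on the other hand, one of these quantities is negative — say $c_1 + d_1 - a_1 < 0$, so that $a_1 > c_1 + d_1$ — then combining with $a_2 \geq c_2$ and $a_2 \geq d_2$ yields
$$a_1 a_2 > (c_1 + d_1)\,a_2 \geq c_1 c_2 + d_1 d_2$$
directly, and the positivity of $b_1 b_2$ finishes the argument.

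The only subtlety, and the place that requires the case split, is the following: the lower bound $b_i \geq c_i + d_i - a_i$ carries no information when its right-hand side is negative, so the clean product-bound argument breaks down. However, in precisely that regime, $a_i$ is so large relative to $c_i + d_i$ that $a_1 a_2$ alone already dominates $c_1 c_2 + d_1 d_2$. Thus there is no genuine obstacle beyond keeping track of signs and choosing the appropriate argument in each case.
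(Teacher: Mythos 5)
Your proof is correct and is essentially the same as the paper's: the same case split on the sign of $c_i+d_i-a_i$, the same direct bound $a_1a_2\geq (c_1+d_1)a_2\geq c_1c_2+d_1d_2$ in the degenerate case, and the same algebraic identity producing $(a_1-c_1)(a_2-d_2)+(a_1-d_1)(a_2-c_2)\geq 0$ in the main case.
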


\begin{proof} Clearly, we only have to prove that $a_1a_2+b_1b_2\geq
  c_1c_2+d_1d_2$, the other inequality is trivial. 
Note that $b_i\geq \max(0,c_i+d_i-a_i)$. If one of $c_i+d_i-a_i<0$, say
$c_1+d_1<a_1$  then 
$$a_1a_2\geq (c_1+d_1)a_2\geq c_1c_2+d_1d_2.$$
If both $c_i+d_i-a_i\geq 0$ for $i=1,2$, then
$$a_1a_2+b_1b_2\geq a_1a_2+(c_1+d_1-a_1)(c_2+d_2-a_2)=$$
$$=c_1c_2+d_1d_2+(a_1-c_1)(a_2-d_2)+(a_1-d_1)(a_2-c_2)\geq c_1c_2+d_1d_2.$$
Hence we are done.
\end{proof}

We first treat the case of $k$ being even. 

\begin{proof}[ Proof of first part of Theorem~\ref{of starlike}] 
In this proof $k$ is even: $k=2t$. We label
$V(A)\setminus p_0$ with $\{a_m\ |\ 1\leq m\leq M\}$, $V(A')\setminus p'_0$
with $\{a'_i\ |\ 1\leq m\leq M\}$, $V(B)\setminus p_k$ with $\{b_n\ |\ 1\leq
n\leq N\}$ and $V(B)\setminus p'_0$ with $\{b'_n\ |\ 1\leq n\leq N\}$, where
$a_m$ (resp. $b_n$) is corresponding to $a'_m$ (resp. $b'_n$) under the
KC-transformation. For  $v\in H$, we always write  
$$ {\bf h}(H,v,T)=(a_{1}, a_{2}, \dots, a_{M}, p_{0}, p_{1}, \dots, p_{k}, b_{1},
b_{2}, \dots, b_{N}) $$
and
$$ {\bf h}(H,v,T')=(a'_{1}, a'_{2}, \dots, a'_{M}, p'_{0}, p'_{1}, \dots,
p'_{k}, b'_{1}, b'_{2}, \dots, b'_{N}), $$
where we use the labels of vertices of $T$ and $T'$ to index the parameters of
the hom-vectors to $T$ and $T'$ respectively. We hope that it will not cause
any confusion. We shall prove by induction on the steps of tree-walk algorithm
that  
\begin{align}
a'_m\geq a_m,\  b'_n\geq b_n\label{ine1}\\
p'_{i}+p'_{k-i}\geq p_{i}+p_{k-i}\label{ine2}\\ 
p'_{i}\geq p_{i},\  p'_{i}\geq p_{k-i}\label{ine3} 
\end{align}
for $1\leq m \leq M, 1\leq n \leq N, 0\leq i \leq t$.
We simply write ${\bf h}(H,v,T)\leq {\bf h}(H,v,T')$ if two such hom-vectors
from $H$ to $T$ and $T'$ satisfy the  inequalities~\eqref{ine1},~\eqref{ine2}
and~\eqref{ine3}.  

It is easy to verify that all these inequalities are satisfied after applying
any recursion step of the tree-walk algorithm. When $v$ is a leaf of $H$ then
it is trivial that these inequalities are preserved. If $v$ is not a leaf then
we use Lemma~\ref{funny} to see that the Hadamard-product preserves these
inequalities.   
\end{proof}

\begin{Lemma} \label{path-odd k}
Let $k$ be odd and assume that $A$ and $B$ have at least two
vertices.  Let $a_m(r),b_n(r),p_i(r),a'_m(r),b'_n(r),p_i'(r)$ denote the
number of homomorphism of $P_r$ into $T$ and $T'$, respectively, such that
the endvertex of $P_r$ goes to the vertices
$a_m,b_n,p_i,a'_m,b'_n$ and $p_i'$, respectively. Then the following
inequalities hold for every  $r$:  
\begin{align}  
a'_m(r)\geq a_m(r),\  b'_n(r)\geq b_n(r)\\
p'_i(r)+p'_j(r)\geq p_i(r)+p_j(r)   \label{ine4}        \\ 
p'_i(r)+p'_j(r)\geq p_{k-i}(r)+p_{k-j}(r) \label{ine5}
\end{align}
for $1\leq m \leq M, 1\leq n \leq N$ and $i+j\leq k$.
\end{Lemma}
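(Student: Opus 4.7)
The plan is to prove all the inequalities simultaneously by induction on $r$, using the iterated application of Recursion 2 from the Tree-walk algorithm: for any vertex $v$ of $T$ we have $v(r+1) = \sum_{w \in N(v)} w(r)$, and analogously in $T'$. The base case $r = 1$ is immediate, since each quantity equals $1$ and every inequality holds with equality.

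For the inductive step I would treat the three families of inequalities in turn. The pointwise inequality $a'_m(r+1) \geq a_m(r+1)$ reduces to the IH applied to the neighbors of $a_m$ in $A$ (which correspond bijectively to the neighbors of $a'_m$ in $A'$), together with $p'_0(r) \geq p_0(r)$, i.e.\ the $i = j = 0$ case of the first family. The inequality $b'_n(r+1) \geq b_n(r+1)$ is handled the same way, except that the neighbor $p_k$ of $b_n$ in $T$ is replaced by $p'_0$ for $b'_n$ in $T'$ (this is precisely the KC-move), so the relevant input is $p'_0(r) \geq p_k(r)$, the $i = j = 0$ case of the second family.

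For the path sum inequalities with $i+j \leq k$, the interior range $0 < i, j < k$ falls out cleanly: expanding
$$p'_i(r+1) + p'_j(r+1) = p'_{i-1}(r) + p'_{i+1}(r) + p'_{j-1}(r) + p'_{j+1}(r),$$
I would regroup the four terms as the pairs $(i-1,\,j+1)$ and $(i+1,\,j-1)$. Both pairs have index-sum $i + j \leq k$, so the first-family IH applies to each, and adding gives the desired bound; an identical regrouping with the second-family IH handles the second family.

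The main obstacle is the boundary regime, where $i$ or $j$ equals $0$ or $k$. The recursion is genuinely asymmetric there: the KC-move dumps a surplus $\sum_{v \in N_B(p_k)} v'(r)$ into $p'_0(r+1)$ while reducing $p'_k$ to a leaf with $p'_k(r+1) = p'_{k-1}(r)$. After applying the obvious pairings together with the pointwise IH on $\sum u$ and $\sum v$, the remaining task reduces to an auxiliary estimate of the shape
$$\sum_{v \in N_B(p_k)} v'(r) + p'_k(r) \geq p_k(r).$$
My plan is to carry such auxiliary inequalities as extra clauses of the inductive hypothesis and to verify them in parallel with the main ones, making essential use of the assumption $|A|, |B| \geq 2$ (so that $|N_B(p_k)| \geq 1$). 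The surplus coming from the moved neighbors should then absorb the deficit at $p'_k$ and close the induction.
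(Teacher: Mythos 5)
Your setup (induction on $r$ via $v(r)=\sum_{u\in N(v)}u(r-1)$, the pointwise inequalities for $a'_m,b'_n$ reducing to $p'_0\geq p_0,p_k$, and the interior regrouping of $p'_{i\pm1},p'_{j\pm1}$ into pairs with index sum $i+j\leq k$) coincides with the paper's proof. But the part you defer — the boundary regime where $i$ or $j$ is $0$ or $k$ — is precisely where the paper's work lies, and your plan for it does not close. The single auxiliary inequality you propose, $\sum_{v\in N_B(p_k)}v'(r)+p'_k(r)\geq p_k(r)$, is the right leftover only for the subcases where the index $k$ survives the pairing (e.g.\ $i=0$, $j=k-1$ after pairing $(1,k-2)$); for $i=0$ and $2\leq j\leq k-2$ the pairing $(1,j-1)$ or $(1,j+1)$ leaves a deficit of the form $\sum_{b'}b'(r-1)+p'_{s}(r-1)\geq p_{s}(r-1)$ with an \emph{intermediate} index $s\in\{j-1,j+1\}$ that can exceed $k/2$, so a whole family of such auxiliary clauses would be needed, and it is not verified (nor obvious) that this family is itself preserved by one recursion step — unrolling it reproduces the same difficulty one index lower with $\sum_{b'}b'$ replaced by multiples of $p'_0$. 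The case $k=1$ is also untouched: there the inductive hypothesis gives no usable $p'_1\geq p_1$, and no pairing is available.

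The paper closes these cases with two devices absent from your proposal. First, monotonicity in $r$: $u(r)\geq u(r-1)$ because every homomorphism of $P_{r-1}$ rooted at $u$ extends to one of $P_r$; this is what makes $b'_n(r-1)\geq b_n(r-1)\geq b_n(r-2)$ usable in the $k=1$ case and in the subcase $j=k-1$. Second, for $2\leq j\leq k-2$ the recursion is unrolled \emph{two} steps, to level $r-2$, so that $p'_0(r-2)$ appears on the left with multiplicity $d_A+d_B+1$ (versus $d_A+1$ for $p_0(r-2)$ on the right); the surplus $d_B$ copies, together with pairings such as $p'_0+p'_{j+2}\geq p_0+p_{j+2}$ and $2(p'_0+p'_j)\geq 2(p_0+p_j)$, absorb the deficit. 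Without these (or a genuinely verified strengthened hypothesis playing their role), the induction as you have sketched it does not go through, so the proposal has a real gap rather than being a complete alternative proof.
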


\begin{proof} We prove the claim by induction on $r$. For $r=1,2$, the claim
  is trivial. Note that we only have to prove that 
\begin{align*}  
a'_m(r)\geq a_m(r),\  b'_n(r)\geq b_n(r) \\
p'_i(r)+p'_j(r)\geq p_i(r)+p_j(r) 
\end{align*}
for $1\leq m \leq M, 1\leq n \leq N$ and $i+j\leq k$.
We obtain the inequality 
$$p'_i(r)+p'_j(r)\geq p_{k-i}(r)+p_{k-j}(r)$$ 
by simply exchanging the role of $A$ and $B$. Also note that if we put $i=j$
in the inequality~\eqref{ine4} and~\eqref{ine5} we obtain that $p'_i(r)\geq
p_i(r), p_{k-i}(r)$ for $i<k/2$.   

Observe that for any vertex $v$ we have:
$$v(r)=\sum_{u\in N(v)}u(r-1).$$

We will treat the cases $k=1$ and $k\geq 3$ separately. 
\bigskip

\noindent {\bf Case 1: $k=1$.} In this case, we have to prove the inequalities:
$$a'_m(r)\geq a_m(r),\  b'_n(r)\geq b_n(r),p'_0(r)\geq \max(p_0(r),p_1(r)),
p'_0(r)+p'_1(r)\geq p_0(r)+p_1(r).$$

The inequalities $a'_m(r)\geq a_m(r),\  b'_n(r)\geq b_n(r)$ simply follow from
the inequalities $a'_m(r-1)\geq a_m(r-1),\  b'_n(r-1)\geq b_n(r-1)$, and 
$p'_0(r-1)\geq p_0(r-1),p_1(r-1)$.

Observe that
$$p'_0(r)=\sum_{a'_m\in N(p'_0)}a'_m(r-1)+\sum_{b'_n\in
  N(p'_0)}b'_n(r-1)+p'_1(r-1)=$$ 
$$= \sum_{a'_m\in N(p'_0)}a'_m(r-1)+\sum_{b'_n\in N(p'_0)}b'_n(r-1)+p'_0(r-2)\geq $$
$$\geq  \sum_{a_m\in N(p_0)}a_m(r-1)+\sum_{b_n\in N(p_1)}b_n(r-1)+p_0(r-2)\geq $$
$$\geq \sum_{a_m\in N(p_0)}a_m(r-1)+\sum_{b_n\in N(p_1)}b_n(r-2)+p_0(r-2)=$$
$$=\sum_{a_m\in N(p_0)}a_m(r-1)+p_1(r-1)=p_0(r).$$
We used the induction hypothesis and that $b_m(r-1)\geq b_m(r-2)$. In general,
$u(r)\geq u(r-1)$ since any homomorphism of $P_{r-1}$ starting at the vertex
$u$ can be extended to a homomorphism of $P_{r}$ starting at $u$. 
Clearly, we can get $p'_0(r)\geq p_1(r)$ similarly, or we just switch the role
of $A$ and $B$. 

Finally,
$$p'_0(r)+p'_1(r)=\sum_{a'_m\in N(p'_0)}a'_m(r-1)+\sum_{b'_n\in
  N(p'_0)}b'_n(r-1)+p'_0(r-1)+p'_1(r-1)\geq $$  
$$\geq \sum_{a_m\in N(p_0)}a_m(r-1)+\sum_{b_n\in
  N(p_1)}b_n(r-1)+p_0(r-1)+p_1(r-1)=p_0(r)+p_1(r).$$ 
Hence we are done in this case.
\bigskip

\noindent {\bf Case 2: $k\geq 3$.} Clearly, the  inequalities $a'_m(r)\geq
a_m(r),\  b'_n(r)\geq b_n(r)$ simply follow from the inequalities
$a'_m(r-1)\geq a_m(r-1),\  b'_n(r-1)\geq b_n(r-1)$, and  $p'_0(r-1)\geq
p_0(r-1),p_k(r-1)$ as before. 

So we only have to prove the inequality $p'_i(r)+p'_j(r)\geq p_i(r)+p_j(r)$
for $i+j\leq k$. We can assume that $i\leq j$. If $i\geq 1$, then $j\leq k-1$
and  
$$p'_i(r)+p'_j(r)=(p'_{i-1}(r-1)+p'_{j+1}(r-1))+(p'_{i+1}(r-1)+p'_{j-1}(r-1))\geq $$ 
$$\geq (p_{i-1}(r-1)+p_{j+1}(r-1))+(p_{i+1}(r-1)+p_{j-1}(r-1))=p_i(r)+p_j(r).$$  

So we only have to consider the case $i=0$. In this case we consider the cases
$j=0,j=1,2\leq j\leq k-2,j=k-1,j=k$ separately. Unfortunately, all of them
behaves a bit differently. 
\bigskip

\noindent {\bf Subcase $j=0$:}
\begin{align*}
2p'_0(r)&=2\biggl(\sum_{a'_m\in N(p'_0)}a'_m(r-1)+\sum_{b'_n\in
  N(p'_0)}b'_n(r-1)+p'_1(r-1)\biggr) \\
&\geq 2\biggl(\sum_{a_m\in N(p_0)}a_m(r-1)+p_1(r-1)\biggr)=2p_0(r),
\end{align*}
since $p'_1(r-1)\geq p_1(r-1)$, because $1<k/2$.
\bigskip

\noindent {\bf Subcase $j=1$:} 
$$
p'_0(r)+p'_1(r)=\sum_{a'_m\in N(p'_0)}a'_m(r-1)+\sum_{b'_n\in
  N(p'_0)}b'_n(r-1)+p'_1(r-1)+p'_0(r-1)+p'_2(r-1) 
$$
$$
\qquad\quad\geq \sum_{a_m\in N(p_0)}a_m(r-1)+p_1(r-1)+p_0(r-1)+p_2(r-1)=p_0(r)+p_1(r),
$$
since $p'_1(r-1)\geq p_1(r-1)$ and $p'_0(r-1)+p'_2(r-1)\geq p_0(r-1)+p_2(r-1)$.
\bigskip

\noindent {\bf Subcase $2\leq j\leq k-2$:}
Here we jump back from $r$ to $r-2$, so we need a few notations. Let $d_A$ and
$d_B$ denote the degree of $p'_0$ in $A$ and $B$, respectively. Furthermore,
let $d(v,u)$ denote the distance of the vertices $u$ and $v$.  
Then
$$p'_0(r)+p'_j(r)=\sum_{a'_m: d(a'_m,p'_0)=2}a'_m(r-2)+\sum_{b'_n: 
  d(b'_n,p'_0)=2}b'_n(r-2)+(d_A+d_B+1)p'_0(r-2)$$
$$\qquad\quad+p'_2(r-2)+p'_{j-2}(r-2)+2p'_{j}(r-2)+p'_{j+2}(r-2)\geq $$  
$$\quad\geq \sum_{a_m: d(a_m,p_0)=2}a_m(r-2)+(d_A+1)p_0(r-2)+p_2(r-2)+$$ 
$$\qquad\quad+p_{j-2}(r-2)+2p_{j}(r-2)+p_{j+2}(r-2)=p_0(r)+p_j(r),$$
since the inequality follows from the following inequalities:
\begin{align*}
a'_m(r-2)\geq a_m(r-2),\ b'_n(r-2)\geq 0 \\
(d_B-1)p'_0(r-2)\geq -p_0(r-2) \\
(d_A-1)p'_0(r-2)\geq (d_A-1)p_0(r-2) \\
p'_2(r-2)+p'_{j-2}(r-2)\geq p_2(r-2)+p_{j-2}(r-2) \\
2(p'_0(r-2)+p'_{j}(r-2))\geq 2(p_0(r-2)+p_{j}(r-2))\\
p'_0(r-2)+p'_{j+2}(r-2)\geq p_0(r-2)+p_{j+2}(r-2).
\end{align*}
\bigskip

\noindent {\bf Subcase $j=k-1$:}
$$p'_0(r)+p'_{k-1}(r)=\sum_{a'_m\in N(p'_0)}a'_m(r-1)+\sum_{b'_n\in
  N(p'_0)}b'_n(r-1)+p'_1(r-1)+p'_{k-2}(r-1)+p'_{k}(r-1)=$$
$$=\sum_{a_m: d(a'_m,p'_0)=2}a'_m(r-2)+d_Ap'_0(r-2)+\sum_{b'_n\in
  N(p'_0)}b'_n(r-1)+p'_0(r-2)+p'_2(r-2)+p'_{k-3}(r-2)+2p'_{k-1}(r-2).$$
On the other hand,
$$p_0(r)+p_{k-1}(r)=\sum_{a_m\in
  N(p_0)}a_m(r-1)+p_1(r-1)+p_{k-2}(r-1)+p_{k}(r-1)=$$
$$=\sum_{a_m:
  d(a_m,p_0)=2}a_m(r-2)+d_Ap_0(r-2)+p_0(r-2)+p_2(r-2)+p_{k-3}(r-2)+2p_{k-1}(r-2)+
\sum_{b_n\in N(p_k)}b_n(r-2).$$
The inequality $p'_0(r)+p'_{k-1}(r)\geq p_0(r)+p_{k-1}(r)$ follows from
\begin{align*}
a'_m(r-2)\geq a_m(r-2),\ b'_n(r-1)\geq b_n(r-1)\geq b_n(r-2)\\ 
(d_A-1)p'_0(r-2)\geq (d_A-1)p_0(r-2) \\
p'_2(r-2)+p'_{k-3}(r-2)\geq p_2(r-2)+p_{k-3}(r-2) \\
2(p'_0(r-2)+p'_{k-1}(r-2))\geq 2(p_0(r-2)+p_{k-1}(r-2)).
\end{align*}
\bigskip

\noindent {\bf Subcase $j=k$:}
$$p'_0(r)+p'_k(r)=\sum_{a'_m\in N(p'_0)}a'_m(r-1)+\sum_{b'_n\in
  N(p'_0)}b'_n(r-1)+p'_1(r-1)+p'_{k-1}(r-1)\geq $$ 
$$\geq \sum_{a_m\in N(p_0)}a_m(r-1)+\sum_{b_n\in
  N(p_k)}b_n(r-1)+p_1(r-1)+p_{k-1}(r-1)=p_0(r)+p_k(r).$$
\end{proof}

\begin{proof}[ Proof of the second part of Theorem~\ref{of starlike}]
From Lemma~\ref{path-odd k} we only keep the inequalities
\begin{align*}
a'_m(r)\geq a_m(r),\  b'_n(r)\geq b_n(r)\\
p'_i(r)\geq p_i(r), p_{k-i}(r)\\ 
p'_i(r)+p'_{k-i}(r)\geq p_{i}(r)+p_{k-i}(r)
\end{align*}
for $1\leq m \leq M, 1\leq n \leq N, 0\leq i \leq k/2$.

For a tree $H$ and $v\in H$, let us write  
$$ {\bf h}(H,v,T)=(a_{1}, a_{2}, \dots, a_{M}, p_{0}, p_{1}, \dots, p_{k}, b_{1},
b_{2}, \dots, b_{N}) $$
and
$$ {\bf h}(H,v,T')=(a'_{1}, a'_{2}, \dots, a'_{M}, p'_{0}, p'_{1}, \dots,
p'_{k}, b'_{1}, b'_{2}, \dots, b'_{N}),$$
where we use the labels of vertices of $T$ and $T'$ to index the parameters of
the hom-vectors to $T$ and $T'$, respectively. We say that ${\bf h}(H,v,T)\leq{\bf
  h}(H,v,T')$ if the following inequalities hold
\begin{align*}
a'_m\geq a_m,\  b'_n\geq b_n\\
p'_{i}+p'_{k-i}\geq p_{i}+p_{k-i}\\ 
p'_{i}\geq p_{i},\  p'_{i}\geq p_{k-i}
\end{align*}
for $1\leq m \leq M, 1\leq n \leq N, 0\leq i \leq k/2$.
As we have seen these inequalities hold for a path $P_r$ and its
endvertex. Since these inequalities are preserved for Hadamard-product by
Lemma~\ref{funny}, we see that ${\bf h}(H,v,T)\leq{\bf h}(H,v,T')$ for starlike
trees $H$, where $v$ is the center of the starlike tree. This implies that
$$\hom(H,T')\geq \hom(H,T).$$
\end{proof}

The following generalization of inequality~\eqref{path} follows immediately
from Proposition~\ref{pr1} and Theorem~\ref{of starlike}. 
\begin{Cor}\label{ex:totree}
Let $H$ be a  starlike tree and let $T_n$ be a tree on $n$ vertices. Then 
\begin{equation*}
\hom(H,P_n)\leq \hom(H,T_n)\leq \hom(H,S_n).
\end{equation*}
\end{Cor}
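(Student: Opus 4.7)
The plan is to deduce this directly by composing the two cited results: Proposition~\ref{pr1} gives a combinatorial skeleton (a graded poset with $P_n$ at the bottom and $S_n$ at the top) that lets us pass between any tree and its extremes via a sequence of KC-transformations, while Theorem~\ref{of starlike} guarantees that each single KC-step does not decrease $\hom(H,\cdot)$ when $H$ is starlike.

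First I would observe that the parity restriction in Theorem~\ref{of starlike} disappears in the present setting: since $H$ is assumed to be starlike, the hypothesis ``$k$ is odd and $H$ is starlike'' is satisfied, and the hypothesis ``$k$ is even and $H$ is any tree'' is trivially satisfied. Consequently, for any tree $T$ and any KC-transformation $T'=KC(T,x,y)$ (irrespective of the parity of the length of the path along which the transformation is performed), we have
\[
\hom(H,T)\leq \hom(H,T').
\]

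Next I would invoke Proposition~\ref{pr1}. The KC-transformation induces a graded poset on trees of order $n$ with $P_n$ as the unique minimum and $S_n$ as the unique maximum; in particular, any tree $T_n$ on $n$ vertices can be obtained from $P_n$ by a finite chain of KC-transformations
\[
P_n=U_0 \;\rightsquigarrow\; U_1 \;\rightsquigarrow\; \cdots \;\rightsquigarrow\; U_s=T_n,
\]
and similarly $T_n$ can be extended by further KC-steps to a chain
\[
T_n=V_0 \;\rightsquigarrow\; V_1 \;\rightsquigarrow\; \cdots \;\rightsquigarrow\; V_t=S_n.
\]
Applying the monotonicity established in the first step along each edge of these chains yields $\hom(H,P_n)\leq \hom(H,T_n)$ and $\hom(H,T_n)\leq \hom(H,S_n)$, which is the desired statement.

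Since the entire argument is a routine concatenation of two already-proved results, there is no substantial obstacle; the only thing to be careful about is recognizing that the starlike hypothesis on $H$ is precisely what removes the parity caveat in Theorem~\ref{of starlike}, so that the monotonicity can be invoked along \emph{every} edge of the KC-poset without case distinction.
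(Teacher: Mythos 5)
Your proposal is correct and is exactly the paper's argument: the paper states this corollary ``follows immediately from Proposition~\ref{pr1} and Theorem~\ref{of starlike},'' which is precisely the concatenation you spell out. You are right that the starlike hypothesis on $H$ is what makes Theorem~\ref{of starlike} applicable for KC-transformations of both parities, so monotonicity holds along every edge of the graded poset from $P_n$ up to $S_n$.
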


The reader may wonder that if inequality \eqref{eq:2} holds when $k$ is odd
and $H$ is not a starlike tree. This is not true in general. A counterexample
will be constructed in the following, which also shows that  
$$\hom(H,T_n)\leq \hom(H,S_n)$$
is not true for any tree $H$.

\begin{Pro} \label{bipartite} Let $T$ be a tree with color classes $A$ and $B$
considered as a bipartite graph. Then
$$\hom(T,S_n)=(n-1)^{|A|}+(n-1)^{|B|}.$$
\end{Pro}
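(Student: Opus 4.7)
The plan is to exploit the very rigid structure of homomorphisms into a star. Let $c$ denote the center vertex of $S_n$ and let $\ell_1,\dots,\ell_{n-1}$ denote its leaves. Every edge of $S_n$ has the form $\{c,\ell_i\}$, so for any homomorphism $f\colon T\to S_n$ and any edge $uv\in E(T)$, exactly one of $f(u),f(v)$ must be $c$ and the other must be a leaf.

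First I would observe that this local constraint, combined with the fact that $T$ is connected and bipartite with color classes $A$ and $B$, forces a global dichotomy: either $f(A)\subseteq\{c\}$ and $f(B)\subseteq\{\ell_1,\dots,\ell_{n-1}\}$, or $f(A)\subseteq\{\ell_1,\dots,\ell_{n-1}\}$ and $f(B)\subseteq\{c\}$. Indeed, pick any vertex $v_0\in A$; once $f(v_0)$ is declared to be either $c$ or some leaf, propagating along paths in the connected tree $T$ determines on which side of the bipartition of $S_n$ each vertex must land, and this determination depends only on the parity of the distance from $v_0$, i.e.\ only on which color class the vertex belongs to.

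Next I would count each case. In the first case, every vertex of $A$ has the unique choice $c$, while each vertex of $B$ is free to be sent to any of the $n-1$ leaves independently (any such assignment is indeed a homomorphism, since every edge $uv$ of $T$ then maps to $\{c,\ell_j\}\in E(S_n)$). This gives $(n-1)^{|B|}$ homomorphisms. Symmetrically, the second case contributes $(n-1)^{|A|}$. Since $A$ and $B$ are both nonempty (assuming $|V(T)|\geq 2$; the trivial case $|V(T)|=1$ is immediate), the two cases are disjoint, and adding them yields the claimed formula
$$\hom(T,S_n)=(n-1)^{|A|}+(n-1)^{|B|}.$$

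There is no real obstacle here; the only point requiring a brief justification is the global bipartite dichotomy, and that follows from connectedness of $T$ by a one-line propagation argument along a spanning tree (which is $T$ itself).
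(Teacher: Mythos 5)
Your proof is correct and follows essentially the same route as the paper: both arguments use bipartiteness (plus connectedness of $T$) to conclude that one color class must map to the center of $S_n$ and the other to the leaves, then count $(n-1)^{|B|}$ and $(n-1)^{|A|}$ homomorphisms in the two disjoint cases. You merely spell out the propagation argument that the paper leaves implicit.
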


\begin{Cor}\label{to stars}
 Let $T_m$ be a tree on $m$ vertices, then
$$\hom(P_m,S_n)\leq \hom(T_m,S_n)\leq \hom(S_m,S_n).$$
If $T\neq S_m$ then the second inequality is strict.
\end{Cor}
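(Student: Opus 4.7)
The plan is to observe that Proposition~\ref{bipartite} reduces the corollary to a one-variable inequality. For any tree $T$ on $m$ vertices with bipartition $(A,B)$, set $a=|A|$ and $b=m-a$; then $\hom(T,S_n)=(n-1)^a+(n-1)^{m-a}$ depends only on the unordered pair of class sizes. Define
$$\phi(a):=(n-1)^a+(n-1)^{m-a}\qquad (1\leq a\leq m-1),$$
the restriction $a\geq 1$ being forced because a tree with $m\geq 2$ vertices has both classes non-empty. The two inequalities of the corollary then translate, respectively, into \emph{minimizing} and \emph{maximizing} $\phi$ over the class-size pairs realizable by trees on $m$ vertices.

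To analyze $\phi$, I would compute the finite difference
$$\phi(a+1)-\phi(a)=(n-2)\bigl((n-1)^{a}-(n-1)^{m-a-1}\bigr).$$
Assuming $n\geq 3$, so that $n-2>0$, this is negative exactly when $a<(m-1)/2$, so $\phi$ strictly decreases on $\{1,\dots,\lfloor m/2\rfloor\}$ and, by the symmetry $\phi(a)=\phi(m-a)$, strictly increases on $\{\lceil m/2\rceil,\dots,m-1\}$. Hence $\phi$ is minimized at $a=\lfloor m/2\rfloor$ and maximized at $a=1$. (For $n=2$, $\phi\equiv 2$ and all inequalities degenerate to equalities.)

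The lower bound is then immediate: the path $P_m$ has bipartition $(\lfloor m/2\rfloor,\lceil m/2\rceil)$, which realizes the minimum of $\phi$, giving $\hom(P_m,S_n)=\phi(\lfloor m/2\rfloor)\leq\hom(T_m,S_n)$. For the upper bound I would first verify the only genuinely combinatorial step, namely that $S_m$ is the \emph{unique} tree on $m$ vertices with a bipartition class of size $1$: if some class equals $\{v\}$, then every edge of the tree (being bichromatic) is incident to $v$, so $v$ is adjacent to every other vertex and $T=S_m$. Consequently for any tree $T\neq S_m$ both bipartition classes have size at least $2$, giving
$$\hom(T,S_n)=\phi(a)\leq \phi(2)<\phi(1)=\hom(S_m,S_n)\qquad (n\geq 3),$$
the strict inequality coming from the monotonicity established above. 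There is no serious obstacle here; the only thing requiring even a moment's thought is the uniqueness of $S_m$ among trees with a singleton bipartition class, and everything else is a short arithmetic check based on Proposition~\ref{bipartite}.
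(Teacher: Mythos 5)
Your proof is correct and follows essentially the route the paper intends: the paper states this corollary as an immediate consequence of Proposition~\ref{bipartite}, and your analysis of $\phi(a)=(n-1)^a+(n-1)^{m-a}$ (balanced classes minimize, a singleton class maximizes, and $S_m$ is the unique tree with a singleton class) is exactly the implicit deduction, spelled out carefully including the degenerate $n=2$ case.
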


\begin{proof}[Proof of Proposition~\ref{bipartite}] Since $T$ and $S_n$ are
bipartite graphs, a color class of $T$ have to go into a color class of $S_n$.
If the color class $A$ goes to the center of $S_n$, then any vertex belonging
to the color class of $B$ can go to any leaf of the star, so it provides
$(n-1)^{|B|}$ homomorphisms. The other case provides $(n-1)^{|A|}$ homomorphisms.
\end{proof}

This simple proposition also shows us how  to construct a tree $T_n$ for which 
$\hom(T_n,T_n)>\hom(T_n,S_n)$. 

\begin{figure}[h!] 
\begin{center}
\scalebox{.65}{\includegraphics{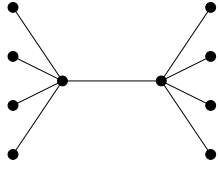}}\caption{The doublestar
  $S^*_{10}$.}  
\end{center}
\end{figure} 

Let $T_n=S^*_{2k}$ be the doublestar on $2k$ vertices with $2k-2$ leaves and
two vertices of degree $k$. Then it is easy to see that
$$\hom(S^*_{2k},S^*_{2k})>2(k-1)^{2(k-1)}=2(k^2-2k+1)^{k-1},$$
while 
$$\hom(S^*_{2k},S_{2k})=2(2k-1)^{k}.$$
Hence for $k\geq 5$ we have 
$$\hom(S^*_{2k},S^*_{2k})>\hom(S^*_{2k},S_{2k}).$$
Note that $S_{2k}$ can be obtained from $S^*_{2k}$ by a KC-transformation.

\section{Homomorphisms of arbitrary trees}
\label{main:section}

In this section we give the proof of Theorem~\ref{minimality-path} and
Theorem~\ref{main}. As we will see, Theorem~\ref{minimality-path} with some
additional observations implies Theorem~\ref{main}.

To prove Theorem~\ref{minimality-path} we will build on the fact that there
are not many homomorphisms into a path. Indeed, by Theorem~\ref{of stars} we
have 
$$\hom(T_m,P_n)\leq \hom(S_m,P_n)=(n-2)2^{m-1}+2.$$
So for a particular tree $T_n$, it is enough to prove that for every tree
$T_m$ we have
\begin{equation}\label{lower bound}
\hom(T_m,T_n)\geq (n-2)2^{m-1}+2.
\end{equation}
This would immediately imply that
\begin{equation}\label{path-minimality}
\hom(T_m,T_n)\geq \hom(T_m,P_n).
\end{equation}
We will prove that inequality~\ref{lower bound} is indeed true for all tree
$T_n$ with at least four leaves and for a large class of trees with three
leaves. For the remaining trees with three leaves we use Theorem~\ref{of
  starlike}.

\begin{figure}[h]
\begin{center}
\scalebox{.65}{\includegraphics{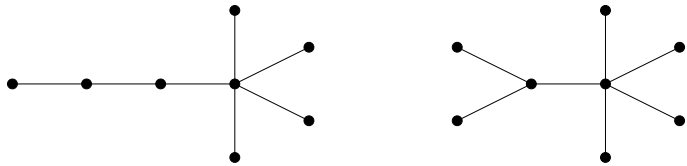}}\caption{The trees $T_8$
  (left) and $T'_8$ (right). \label{counter3}}    
\end{center}
\end{figure}

\begin{Rem} To prove Theorem~\ref{minimality-path} and Theorem~\ref{main} we
cannot rely entirely on the use of KC-transformation. That is why we had to
find another strategy to prove these theorems.

Indeed, KC-transformation does not always increase the
number of endomorphisms of trees. The first counterexample is the two trees
on $8$ vertices in Fig.~\ref{counter3}. The tree $T'_8$ is the
KC-transformation of $T_8$, but  
$|\End(T'_8)|=10430<17190=|\End(T_8)|.$ 
\end{Rem}

\subsection{The extremality of star}

Note that Theorem~\ref{of stars} and Theorem~\ref{of starlike} together
implies the following chain of inequalities:
$$|\End(T_n)|=\hom(T_n,T_n)\leq \hom(S_n,T_n)\leq \hom(S_n,S_n)=|\End(S_n)|,$$
since $S_n$ is a starlike tree. In this section, we will also give a direct
proof for it. 
\bigskip

\noindent \textbf{Theorem~\ref{main}}(Second part).  Let $T_n$ be a tree on $n$
vertices. Then 
$$|\End(T_n)|\leq |\End(S_n)|.$$
If $T_n\neq S_n$ then strict inequality holds.
\bigskip

\begin{proof} For the sake of simplicity we prove the statement for $n\geq
  17$. The same proof applies to $n<17$, we only  need to compute a bit more
  carefully. In the end of the proof we will give the details of this more
  precise calculation.

Note that $|\mbox{End}(S_n)|=(n-1)^{n-1}+(n-1)$.

Let $T_n$ be a tree on $n$ vertices and let $d=d_1\geq d_2\geq \dots d_n$ be
its degree sequence. Note that $d_1+d_2\leq n$, since the tree has only $n-1$
edges and the stars corresponding to the first two largest degrees can share
at most one common edge.

First we prove that $|\mbox{End}(T_n)|\leq nd^{n-1}$. To see it, let
$u_1,\dots u_n$ be the vertices of the tree $T_n$ such that $u_1,\dots ,u_k$
induces a tree for every $k$. Then we can chose the image of $u_1$ by $n$
ways, and if we have already chosen the image of $u_1,\dots ,u_{k-1}$, then we
can chose the image of $u_k$ in at most $d$ ways, since it must be the
neighbor of some previous vertex. This means that  $|\mbox{End}(T_n)|\leq
nd^{n-1}$. 

If $d\leq 2n/3$ then
$$nd^{n-1}\leq n\left(\frac{2n}{3}\right)^n\leq (n-1)^{n-1}$$
if $n\geq 17$, since then 
$$\left(\frac{3}{2}\right)^n\geq en^2\geq
n^2\left(1+\frac{1}{n-1}\right)^{n-1}.$$ 

So we can assume that $d\geq \frac{2n}{3}$. Set $d=n-k$. We can assume that
$T_n\neq S_n$, consequently $k\geq 2$.
Let $v_1$ be the vertex having the largest degree and $v_2,\dots ,v_{d+1}$ its
neighbors. Now we can decompose the set of endomorphisms
according to the image of $v_1$ is $v_1$ or not. If it is $v_1$ then there can
be at most $d^{n-1}$ such endomorphisms. If the image of $v_1$ is not $v_1$,
then we can chose that image in at most $(n-1)$ ways and the image of
$v_2,v_3,\dots ,v_{d+1}$ can be chosen at most $d_2$ times and the image of
all other vertices can be chosen in at most $d$ ways. Hence
$$|\mbox{End}(T_n)|\leq d^{n-1}+(n-1)d_2^{d}d^{n-1-d}.$$
All we need to prove is that if $d\leq n-2$ then
$$d^{n-1}+(n-1)d_2^{d}d^{n-1-d}\leq (n-1)^{n-1}+(n-1).$$
With the notations $d=n-k$ we have
$$d^{n-1}+(n-1)d_2^{d}d^{n-1-d}=(n-k)^{n-1}+(n-1)k^{n-k}(n-k)^{k-1}.$$
By the binomial theorem we have
$$(n-1)^{n-1}=(n-k+k-1)^{n-1}\geq (n-k)^{n-1}+(n-1)(n-k)^{n-2}(k-1).$$
It is enough to prove that $(n-k)^{n-2}\geq k^{n-k}(n-k)^{k-1}$. This is
equivalent with $(n-k)^{n-k-1}\geq k^{n-k}$ and it is true since it is
equivalent with
$$\left(\frac{n}{k}-1\right)^{n-k}\geq 2^{n-k}\geq n-k.$$
In the last step we have used that $n/k\geq 3$. 

It is clear from the proof that we only have to check whether one of the
inequalities hold for some $d$:
$$n\leq \left(\frac{n-1}{d}\right)^{n-1}\ \ \mbox{or}\ \ \ 
\left(\frac{n}{k}-1\right)^{n-k}\geq n-k.$$ 
For $8\leq n\leq 16$ it is easy to see that if $d\leq n-4$ then the first
inequality holds and if $d>n-4$, equivalently $k\leq 3$ then the second
inequality holds. For $n=5,6,7$ the first inequality holds if $d\leq n-3$, and
the second inequality holds if $d>n-3$, equivalently $k\leq 2$. For $n=4$ the
claim is trivial $30=|\mbox{End}(S_4)|>|\mbox{End}(P_4)|=16$.
\end{proof}

\subsection{The extremality of path}

\begin{Th} \label{geq 4 leaves}
Let $T_m$ and $T_n$ be trees on $m$ and $n$ vertices, respectively.
If the tree $T_n$ has at least four leaves, then
$$\hom(T_m,T_n)\geq (n-2)2^{m-1}+2.$$
\end{Th}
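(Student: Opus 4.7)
The plan is to prove the theorem by induction on $m$. The base cases $m=1$ and $m=2$ give $\hom(T_m,T_n)=n$ and $\hom(T_m,T_n)=2(n-1)$ respectively, both equal to $(n-2)2^{m-1}+2$, and so hold for any tree $T_n$ (the four-leaves hypothesis is not yet needed).

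For the inductive step with $m\geq 3$, I would pick a leaf $r$ of $T_m$ adjacent to some vertex $u$, and apply Recursion 2 of the Tree-walk algorithm to obtain
\[
\hom(T_m,T_n) \;=\; \sum_{v\in V(T_n)} d_v \cdot h_v, \qquad h_v := {\bf h}(T_{m-1},u,T_n)_v.
\]
By the inductive hypothesis, $\sum_v h_v = \hom(T_{m-1},T_n)\geq (n-2)2^{m-2}+2$. The naive estimate
\[
\sum_v d_v h_v \;\geq\; 2\sum_v h_v \;-\; \sum_{\text{leaf } v \text{ of } T_n} h_v
\]
only yields $\hom(T_m,T_n)\geq (n-2)2^{m-1}+4-\sum_{\text{leaf}}h_v$, which is insufficient when the hom-vector carries significant mass on leaves of $T_n$, and this is exactly the scenario one fears when $\ell(T_n)\geq 4$.

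To close this gap, I would strengthen the inductive hypothesis into a two-part claim: for every rooted tree $(T,u)$ on $k$ vertices and every $T_n$ with at least four leaves, track simultaneously
\[
\sum_v {\bf h}(T,u,T_n)_v \;\geq\; (n-2)2^{k-1}+2 \quad \text{and} \quad \sum_v d_v\cdot {\bf h}(T,u,T_n)_v \;\geq\; (n-2)2^k+2.
\]
The first gives the theorem for $T$ itself, while the second gives the theorem for the tree obtained by attaching one extra leaf at $u$. Both bounds are then propagated through the two tree-walk recursions: Recursion 2 (adjacency multiplication at a leaf root) cleanly exchanges the roles of the two quantities, because $\sum_v{\bf h}_v = \sum_w d_w {\bf h}'_w$ after the recursion; Recursion 1 (Hadamard product at a non-leaf root, where $T=T_1\cup T_2$) is the delicate step that genuinely requires $\ell(T_n)\geq 4$.

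The main obstacle is the Hadamard product in Recursion 1: entry-wise multiplication of two hom-vectors does not respect simple linear lower bounds, so preserving the degree-weighted estimate forces a structural analysis of how the hom-vector distributes its mass across $V(T_n)$. The four-leaves assumption ensures that $T_n$ contains either a vertex of degree at least $4$ or two vertices of degree at least $3$, which prevents the hom-vectors from collapsing entirely onto low-degree vertices and makes the degree-weighted inequality robust under the Hadamard product. If this analytic step proves too fragile, the natural fallback is to induct instead on $n$, removing a leaf $w$ of $T_n$ whose neighbour has degree $2$ (to preserve the four-leaf count) and arguing that the homomorphisms of $T_m$ into $T_n$ that actually use $w$ contribute at least $2^{m-1}$ to the count above $\hom(T_m, T_n-w)$; the case where no such leaf exists pins down a very restricted family of trees (essentially $S_5$ and the ``H''-caterpillars) for which a direct computation using Proposition~\ref{bipartite} and a short splitting argument suffices.
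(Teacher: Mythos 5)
Your main route is a plan whose central step is missing, and as formulated it cannot be closed. The two functionals you propose to track for a rooted tree, $\Sigma_1=\sum_v h_v$ and $\Sigma_2=\sum_v d_v h_v$, do not propagate through the tree-walk recursions. Even for Recursion 2 only half of the claimed exchange holds: if ${\bf h}={\bf h}'A$ then indeed $\sum_v h_v=\sum_x d_x h'_x$, but the new degree-weighted sum is $\sum_x \bigl(\sum_{w\in N(x)}d_w\bigr)h'_x$, a second-neighbourhood quantity that is not controlled by the old $\Sigma_1,\Sigma_2$. Worse, for Recursion 1 lower bounds on $\Sigma_1$ and $\Sigma_2$ of the two factors give no lower bound on the Hadamard product: take $T_n$ the double star on $6$ vertices with centres $c_1,c_2$, and vectors $(M,1,\dots,1)$ concentrated at $c_1$ (with $M=2^{k_1+1}$) and at $c_2$ (with $M'=2^{k_2+1}$); each satisfies both of your inequalities for sizes $k_1,k_2$, yet the Hadamard product has total mass $2^{k_1+1}+2^{k_2+1}+4$, far below the required $4\cdot 2^{k_1+k_2-2}+2$. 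So any proof along these lines must track substantially finer distributional information about hom-vectors into a $4$-leaf target, and that information is exactly what you have not supplied; the remark that four leaves ``prevent collapse onto low-degree vertices'' is a heuristic, not an argument. By contrast, the paper obtains the needed leverage from the entropy lower bound of Theorem~\ref{markov} (applied with tailor-made Markov chains in Lemmas~\ref{weak-4leaves} and~\ref{weak-4leaves2}, and Theorem~\ref{degree-markov} when a degree-$4$ vertex is present), together with the LS-switch to push a minimizing $4$-leaf tree into those canonical shapes.

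The fallback has the same difficulty in disguise. The assertion that the homomorphisms whose image contains the deleted leaf $w$ number at least $2^{m-1}$ is precisely the per-vertex increment of the bound you are trying to prove (it is already tight for $T_m=S_m$ when the neighbour of $w$ has degree $2$), and you offer no proof of it; it is a local restatement of the theorem rather than a reduction of it. Moreover the residual family is larger and harder than you claim: Proposition~\ref{bipartite} counts homomorphisms into stars and therefore settles only $S_5$, while the trees with exactly four leaves in which every leaf is attached to a degree-$3$ vertex (two such vertices, each carrying two pendant leaves, joined by a path of arbitrary length) form an infinite family. Splitting such a tree along its central path produces brooms $Y_{1,1,c}$, which are exactly the family for which the bound $(k-2)2^{m-1}+2$ is unavailable (it genuinely fails for $Y_{1,1,1}=S_4$) and which are the exceptional case of Theorem~\ref{minimality-path}, so ``a short splitting argument'' cannot finish these cases; in the paper they are precisely the cases requiring the Markov-chain estimates of Lemma~\ref{weak-4leaves} and the explicit induction of Lemma~\ref{weak-4leaves2}, after the reduction of Lemma~\ref{reduction}, which itself uses the inclusion--exclusion Fact together with the upper bound $\hom(T_m,P_3)\le \hom(S_m,P_3)=2^{m-1}+2$ from Theorem~\ref{of stars}. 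In short, both of your routes leave the genuinely hard steps unproved.
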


An easy consequence of this theorem is the following. 

\begin{Cor} \label{4l} If $T_n$ is a tree on $n$ vertices with at least $4$
  leaves, then 
$$\hom(T_m,T_n)\geq \hom(T_m,P_n).$$
\end{Cor}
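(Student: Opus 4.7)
The plan is to derive the corollary as an immediate consequence of the two results that precede it in the section: Sidorenko's extremality of stars (Theorem~\ref{of stars}) will provide the upper bound $\hom(T_m,P_n)\leq\hom(S_m,P_n)$, while Theorem~\ref{geq 4 leaves} will provide the matching lower bound $\hom(T_m,T_n)\geq (n-2)2^{m-1}+2$. Chaining the two inequalities through the bridging identity $\hom(S_m,P_n)=(n-2)2^{m-1}+2$ then yields the desired conclusion.

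First I would verify this bridging identity by a direct count. A homomorphism $f\colon S_m\to P_n$ is completely determined by the image $v=f(c)$ of the center $c$ of $S_m$ together with the independent choice, for each of the $m-1$ leaves of $S_m$, of an element of $N(v)$ as its image. Hence
$$\hom(S_m,P_n)=\sum_{v\in V(P_n)}d(v)^{m-1}=2\cdot 1^{m-1}+(n-2)\cdot 2^{m-1}=(n-2)2^{m-1}+2,$$
using that $P_n$ has exactly two vertices of degree $1$ and $n-2$ vertices of degree $2$. With this in hand, the corollary follows by concatenating
$$\hom(T_m,P_n)\;\leq\;\hom(S_m,P_n)\;=\;(n-2)2^{m-1}+2\;\leq\;\hom(T_m,T_n).$$

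I anticipate no real obstacle in this last step: the entire combinatorial content has been packaged into Theorem~\ref{geq 4 leaves}, and the proof of the corollary itself amounts only to recognising that the star-to-path homomorphism count coincides with the universal lower bound supplied there. The genuine difficulty of the section lies in proving Theorem~\ref{geq 4 leaves} itself, where for every tree $T_m$ and every tree $T_n$ with at least four leaves one must exhibit at least $(n-2)2^{m-1}+2$ distinct homomorphisms from $T_m$ into $T_n$; once that is established, the present corollary is formal.
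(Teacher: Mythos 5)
Your proposal is correct and follows exactly the paper's own argument: chain the lower bound of Theorem~\ref{geq 4 leaves} with the identity $\hom(S_m,P_n)=(n-2)2^{m-1}+2$ (the sum of degree powers of $P_n$) and the upper bound $\hom(T_m,P_n)\leq\hom(S_m,P_n)$ from Theorem~\ref{of stars}. The explicit verification of the bridging identity is a harmless addition; nothing further is needed.
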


\begin{proof} Indeed,
$$\hom(T_m,T_n)\geq (n-2)2^{m-1}+2=\hom(S_m,P_n)\geq \hom(T_m,P_n),$$ 
where the second inequality follows from Theorem~\ref{of stars}.
\end{proof}

A consequence of this theorem is that path has the minimal number of
endomorphisms. 
\bigskip

\noindent \textbf{Theorem~\ref{main}}(First part). For all trees $T_n$ on $n$
vertices we have 
$$|End(T_n)|\geq |End(P_n)|.$$

\begin{proof} If $T_n$ has at least four leaves, then
$$\hom(T_n,T_n)\geq \hom(T_n,P_n)\geq \hom(P_n,P_n),$$
where the first inequality follows from Corollary~\ref{4l}, while the second
inequality follows from Theorem~\ref{into paths}.
If the tree $T_n$ has exactly three leaves, then it is star-like. Hence we can
use Theorem~\ref{of starlike} to prove the first inequality:
$$\hom(T_n,T_n)\geq \hom(T_n,P_n)\geq \hom(P_n,P_n).$$
\end{proof}

The proof of Theorem~\ref{geq 4 leaves} will be given next, which would
complete the proof of Theorem~\ref{main}. 
\bigskip

First, we prove a reduction lemma which says that we only have to prove
Theorem~\ref{geq 4 leaves} for trees with exactly $4$ leaves.

\begin{Lemma}[Reduction lemma]\label{reduction} Let $T_m$ be a tree on $m$
vertices and let $n$ be fixed.
Assume that for all tree $T_k$ we have  
$$\hom(T_m,T_k)\geq (k-2)2^{m-1}+2,$$
where $k<n$ and $T_k$ has at least four leaves, or $k=n$ and $T_k$ has
exactly four leaves. Then for any tree $T_n$ on $n$ vertices with at least $4$
leaves we have 
$$\hom(T_m,T_n)\geq (n-2)2^{m-1}+2.$$
\end{Lemma}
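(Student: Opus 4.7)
The plan is to reduce the general $\ell\geq 4$ case to the case $\ell=4$ (which the hypothesis handles at $k=n$) by deleting a single leaf of $T_n$ whenever $\ell\geq 5$. So assume $T_n$ has $\ell\geq 5$ leaves.

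Choose any leaf $v$ of $T_n$, let $u$ be its unique neighbor, and set $T_{n-1}:=T_n-v$. Whatever the choice of $v$, the tree $T_{n-1}$ still has at least four leaves: if $\deg_{T_n}(u)\geq 3$ the leaf count drops by one to $\ell-1\geq 4$, and if $\deg_{T_n}(u)=2$ then $u$ itself becomes a leaf of $T_{n-1}$, keeping the count $\geq 5$. In either case the hypothesis of the lemma applies to $T_{n-1}$, so $\hom(T_m,T_{n-1})\geq (n-3)\,2^{m-1}+2$. Since every homomorphism $f:T_m\to T_n$ either avoids $v$ (and then factors through $T_{n-1}\hookrightarrow T_n$) or uses it, we get the clean split
\[
\hom(T_m,T_n)\;=\;\hom(T_m,T_{n-1})\;+\;N_v,
\]
where $N_v:=|\{f\in\Hom(T_m,T_n):v\in f(V(T_m))\}|$. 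The lemma therefore reduces to finding a leaf $v$ for which the bonus estimate $N_v\geq 2^{m-1}$ holds.

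For the bonus, I would exploit the branching guaranteed by $\ell\geq 5$: the identity $\sum_{d_i\geq 3}(d_i-2)=\ell-2\geq 3$ produces a vertex of degree at least three in $T_n$, and by choosing $v$ near such a vertex one can construct explicit families of homomorphisms $T_m\to T_n$ through $v$ by routing the two color classes of $T_m$ appropriately through the edge $uv$ and the branching neighborhood. The main obstacle is precisely this bonus estimate. The naive count restricting to homomorphisms into the star centered at $u$ only delivers $\deg(u)^{|A|}+\deg(u)^{|B|}-(\deg(u)-1)^{|A|}-(\deg(u)-1)^{|B|}$ homomorphisms through $v$, where $|A|,|B|$ denote the color classes of $T_m$, and this comfortably handles the case of large $\deg(u)$. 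The genuine difficulty arises when every leaf of $T_n$ is attached to a degree-$2$ vertex (so $T_n$ is a subdivided spider-like tree) and $T_m$ has balanced bipartition, where the star count falls well short of $2^{m-1}$. Overcoming this will require reaching deeper into $T_n$ — for instance along the pendant path from $v$ to the nearest vertex of degree $\geq 3$, or via summing $N_v$ over all leaves and invoking an averaging argument — using the structural input $\ell\geq 5$ to ensure that some leaf does produce the required $2^{m-1}$ homomorphisms.
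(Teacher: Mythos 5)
Your reduction scheme -- delete one leaf $v$, note $T_{n-1}=T_n-v$ still has at least four leaves when $\ell\geq 5$, apply the hypothesis to $T_{n-1}$, and split $\hom(T_m,T_n)=\hom(T_m,T_{n-1})+N_v$ -- is sound as far as it goes, but the proof has a genuine gap exactly where you say it does: the bonus estimate $N_v\geq 2^{m-1}$ for some leaf $v$ is never proved, and it is the entire content of the lemma in disguise. Worse, the estimate has no slack, so there is no room for a crude local argument: if $v$ is a leaf attached to a degree-$2$ vertex at the end of a long pendant path, then for $T_m=S_m$ one gets exactly $N_v=2^{m-1}$ (center at $v$ gives $1$ map, center at $u$ gives $2^{m-1}-1$ maps hitting $v$), and for $T_m=P_m$ a short computation with walks on a half-line again gives exactly $2^{m-1}$. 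Since equality can hold simultaneously at every leaf of a subdivided spider, the proposed averaging over leaves cannot rescue the argument either, and ``reaching deeper into $T_n$'' to certify $N_v\geq 2^{m-1}$ for a general $T_m$ is essentially a one-vertex-at-a-time restatement of the inequality you are trying to prove; no mechanism is offered for it.

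For comparison, the paper avoids any per-vertex bonus count. It covers $T_n$ by two induced subtrees $G_1,G_2$ with $G_1\cup G_2=T_n$, each having at least four leaves and fewer than $n$ vertices, and uses the elementary fact
$$\hom(H,G)\geq \hom(H,G_1)+\hom(H,G_2)-\hom(H,G_1\cap G_2),$$
so that the hypothesis applied to $G_1$ and $G_2$ wins provided $\hom(T_m,G_1\cap G_2)\leq (|V(G_1\cap G_2)|-2)2^{m-1}+2$; when the intersection is a path this upper bound is exactly Sidorenko's star bound $\hom(T_m,P_r)\leq\hom(S_m,P_r)$, and when the intersection is a larger subtree the paper splits into two cases according to whether that subtree itself satisfies the lower or the upper bound. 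This requires a structural case analysis (a maximal path with at least three branches exists, or else $T_n$ is starlike/double starlike, handled by bespoke decompositions), but all the quantitative input comes from the induction hypothesis and the star bound, never from a tight local estimate like $N_v\geq 2^{m-1}$. To repair your proof you would need either to prove that bonus estimate for an arbitrary tree $T_m$ (which I do not see how to do directly), or to switch to a two-subtree cover of this kind.
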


In the proof of this lemma we will subsequently use the following very simple
fact. 

\bigskip
\noindent \textbf{Fact.} If $G$ is a graph and $G_1,G_2$ are induced subgraphs
of $G$ with possible intersection, then for any graph $H$ we have
$$\hom(H,G)\geq \hom(H,G_1)+\hom(H,G_2)-\hom(H,G_1\cap G_2).$$
\bigskip

\begin{proof}[Proof of the lemma.] We can assume that $m\geq 2$. Assume that
  $T_n$ is tree with at least $5$ leaves. Otherwise we have nothing to prove.

Let us call a path maximal in $T_n$ if it
connects leaves. If a maximal path contains $k$ vertices of degree at least
$3$, then we say that the maximal path has $k$ branches.

First we prove the statement if $T_n$ contains a maximal path with at least
$3$ branches. Let $v_0Pv_r$ be a maximal path with vertices $u_1,\dots ,u_k$
having degree at least $3$. Let $B_1,\dots ,B_k$ be the branches which we get
if we delete all vertices and edges of the path $v_0Pv_r$ except $u_1,\dots
,u_k$. So $B_i$ is a rooted tree with root $u_i$. Let $u_2^-$ and $u_2^+$ be
the two neighbors of $u_2$ on the path $v_0Pv_r$. Let $T_2$ be the tree
induced by the vertices $V(B_2)\cup \{u_2^-,u_2^+\}$. Let $|V(T_2)|=t$.
We distinguish two cases.

\begin{figure}[h!]
\begin{center}
\scalebox{.65}{\includegraphics{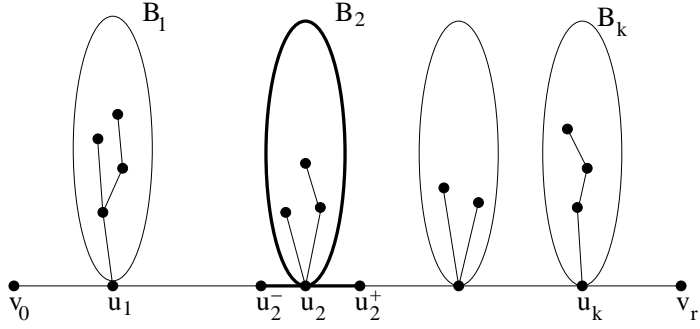}}\caption{A path with
  branches.}    
\end{center}
\end{figure}

\medskip

\noindent \textit{1. case.} In this case we assume that
$\hom(T_m,T_2)<(t-2)2^{m-1}+2$. Consider the  following trees $G_1$ and
$G_2$. $G_1$ is the tree spanned by the vertices $v_0Pu_2^+$ and the branches
$B_1,B_2$. $G_2$ is the tree spanned by the vertices $u_2^-Pv_r$ and the
branches $B_2,\dots, B_k$. Note that $G_1\cup G_2=T_n$, $G_1\cap G_2=T_2$  and
$G_1,G_2$ contains at least $4$ leaves, because $k\geq 3$. By the hypothesis
of the lemma we have
$$\hom(T_m,G_i)\geq (|V(G_i)|-2)2^{m-1}+2$$
for $i=1,2$.  Hence
$$\hom(T_m,T_n)\geq \hom(T_m,G_1)+\hom(T_m,G_2)-\hom(T_m,G_1\cap G_2)\geq $$
$$\geq (|V(G_1)|-2)2^{m-1}+2+(|V(G_2)|-2)2^{m-1}+2-((|V(G_1\cap
G_2)|-2)2^{m-1}+2)=$$ 
$$=((|V(G_1\cup G_2)|-2)2^{m-1}+2=(n-2)2^{m-1}+2.$$
In this case we are done.
\bigskip

  \noindent \textit{2. case.} In this case we assume that
$\hom(T_m,T_2)\geq (t-2)2^{m-1}+2$.  Consider the  following trees $G_1$ and
$G_2$. $G_1$ is the tree spanned by the vertices $(V(T_n)\setminus V(T_2))\cup 
\{u_2^-,u_2,u_2^+\}$. $G_2$ is simply $T_2$.  Note that $G_1\cup G_2=T_n$,
$G_1\cap G_2=\{u_2^-,u_2,u_2^+\}=P_3$ and $G_1$ contains at least $4$
vertices. By the hypothesis
of the lemma we have
$$\hom(T_m,G_1)\geq (|V(G_1)|-2)2^{m-1}+2.$$
We also know that in this case
$$\hom(T_m,G_2)\geq (|V(G_2)|-2)2^{m-1}+2.$$
Note that 
$$\hom(T_m,P_3)\leq \hom(S_m,P_3)=2^{m-1}+2.$$
Then 
$$\hom(T_m,T_n)\geq \hom(T_m,G_1)+\hom(T_m,G_2)-\hom(T_m,G_1\cap G_2)\geq $$
$$\geq (|V(G_1)|-2)2^{m-1}+2+(|V(G_2)|-2)2^{m-1}+2-((|V(G_1\cap
G_2)|-2)2^{m-1}+2)=$$ 
$$=((|V(G_1\cup G_2)|-2)2^{m-1}+2=(n-2)2^{m-1}+2.$$
In this case we are done too.
\bigskip

Hence if one of the maximal path of $T_n$ has at least $3$ branches then we
are done. We still have to consider the trees $T_n$ where all maximal paths
have at most $2$ branches. In the following we show that they have quite simple
structure: they are starlike or double starlike trees, see
Figure~\ref{doublestarlike}. 

Let $v_1$ be  a vertex of $T_n$ of degree at least $3$. Let us decompose $T_n$
to the branches $B_1,B_2,\dots B_k$ at $v_1$. So $v_1$ is a leaf in the
trees  $B_1,B_2,\dots B_k$. We show that all except at most one of 
$B_1,B_2,\dots ,B_k$ are paths. Assume that, for instance, $B_1,B_2$ are not
paths.  Then they contains at least two leaves of $T_n$: $B_1$ contains
$u_1,u_2$, $B_2$ contains $u_3,u_4$. Then the maximal path $u_1Pu_3$ has at
least three branches: one-one inside the branches $B_1$ and $B_2$ and $B_3$ at
the vertex $v$. If all branches are paths, then we are done: $T_n$ is
starlike. If one of them is not path, say $B_1$, then let us consider the
vertex $v_2\in V(B_1)$ having degree at least $3$ which is closest to
$v_1$. Repeating the previous argument to $v_2$ instead of $v_1$, all except
one branches at $v_2$ must be path and we also know that the branch containing
$v_1$ is not path. Hence the tree is double starlike, where the middle path is
$v_1Pv_2$.   
\begin{figure}[h!] 
\begin{center}
\scalebox{.65}{\includegraphics{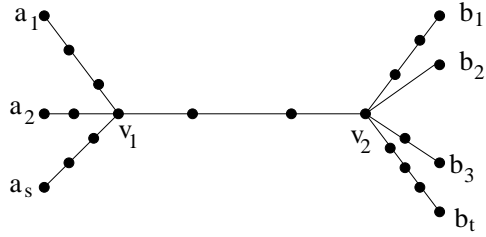}}\caption{A double
  starlike tree.\label{doublestarlike}}    
\end{center}
\end{figure}

We can consider a starlike tree as a double starlike tree, where $v_1=v_2$. 
Let $a_1,\dots ,a_s$ and $b_1,\dots ,b_t$ be the leaves of $T_n$, where 
$a_1,\dots ,a_s$ are closer to $v_1$ than to $v_2$, while $b_1,\dots ,b_t$ are
closer to $v_2$ than to $v_1$. If $v_1=v_2$ we just decompose the set of
leaves into two sets of (almost) equal size. Note that $s,t\geq 2$. Since we
can assume that there are at least $5$ leaves, we assume that $s+t\geq 5$.

If $u_1,\dots ,u_l$ are some vertices of a tree, then we say that the tree
spanned by $u_1,\dots ,u_l$ is the smallest subtree which contains the
vertices $u_1,\dots ,u_l$. It is 
$$span(u_1,\dots ,u_l)=\cup_{1\leq i,j\leq l}u_iPu_j.$$

If $s\geq 3$ and $t\geq 3$ both hold, then let $G_1$ be the tree spanned by
the vertices $a_1,a_2,b_1,b_2$, and let  $G_2$ be the tree spanned by
the vertices $a_2,\dots a_s,b_2,\dots ,b_t$. Then $G_1\cup G_2=T_n$, $G_1\cap
G_2=a_2Pb_2$ and both $G_1,G_2$ have at least $4$ leaves. Since
$$\hom(T_m,G_1\cap G_2)\leq \hom(S_m,G_1\cap G_2)=$$
$$=\hom(S_m,a_2Pb_2)=(|V(G_1\cap G_2)|-2)2^{m-1}+2,$$ 
we have
$$\hom(T_m,T_n)\geq \hom(T_m,G_1)+\hom(T_m,G_2)-\hom(T_m,G_1\cap G_2)\geq $$
$$\geq (|V(G_1)|-2)2^{m-1}+2+(|V(G_2)|-2)2^{m-1}+2-((|V(G_1\cap
G_2)|-2)2^{m-1}+2)=$$ 
$$=(|V(G_1\cup G_2)|-2)2^{m-1}+2=(n-2)2^{m-1}+2.$$
Hence we are done in this case.

If $s\geq 4$ and $t=2$ then let $G_1$ be the tree spanned by $a_1,a_2,b_1,b_2$
and let $G_2$ be the tree spanned by $a_2,\dots ,a_k,b_1$. Then $G_1\cup
G_2=T_n$, $G_1\cap G_2=a_2Pb_1$ and both $G_1,G_2$ have at least $4$ leaves.
In this case we are done as before. Clearly, the case $s\geq 2$ and $t\geq 4$
is completely similar.

The last case is $s=3,t=2$ (and $s=2,t=3$). Let $G_1=span(a_1,a_2,b_1,b_2)$,
$G_2=span(a_2,a_3,b_1,b_2)$, $G_3=span(a_2,b_1,b_2)$,
$G_4=(a_1,a_2,a_3,v_2)$. Then $G_1\cap G_2=G_3$, $G_3\cap G_4=a_2Pv_2$. Note
that $G_1,G_2,G_4$ has $4$ leaves, thus 
$$\hom(T_m,G_i)\geq  (|V(G_i)|-2)2^{m-1}+2$$
for $i=1,2,4$. 
If $\hom(T_m,G_3)\leq  (|V(G_3)|-2)2^{m-1}+2$, then from $T_n=G_1\cup G_2$,
$G_3=G_1\cap G_2$ we obtain that $\hom(T_m,T_n)\geq (n-2)2^{m-1}+2$.
If $\hom(T_m,G_3)\geq  (|V(G_3)|-2)2^{m-1}+2$, then from $T_n=G_3\cap G_4$
we obtain that  $\hom(T_m,T_n)\geq (n-2)2^{m-1}+2$.
Hence we are done in this case as well.
\end{proof}

\begin{Lemma} \label{weak-4leaves} Let $T_n$ be a tree with exactly $4$ leaves
and two vertices of degree $3$. Let $x$ and $y$ be the vertices of $T_n$ with
degree $3$. Assume that there are at most $3$ vertices of $T_n$ which have
degree $2$ and not on the path $xPy$. Then for any tree $T_m$ on $m$ vertices
we have 
$$\hom(T_m,T_n)\geq (n-2)2^{m-1}+2,$$
where $n$ is the number of vertices of $G$.
\end{Lemma}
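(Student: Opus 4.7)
My plan is to decompose $T_n$ along its central path $xPy$ and apply the Fact together with the extremality results proved earlier in the paper.

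Set notation: let $d=d(x,y)$ and let $a_1,a_2$ (resp.\ $b_1,b_2$) be the lengths of the two legs at $x$ (resp.\ $y$), so by hypothesis $a_1+a_2+b_1+b_2\le 7$ and $n=1+d+a_1+a_2+b_1+b_2$. Define the two starlike ``halves'' $G_1=xPy\cup L_1\cup L_2$ and $G_2=xPy\cup L_3\cup L_4$; then $G_1\cup G_2=T_n$ and $G_1\cap G_2=xPy$, so by the Fact
$$
\hom(T_m,T_n)\ \ge\ \hom(T_m,G_1)+\hom(T_m,G_2)-\hom(T_m,xPy).
$$
Theorem~\ref{of stars} bounds the subtracted term by $(d-1)2^{m-1}+2$, and the arithmetic identity $|G_1|+|G_2|-(d+1)=n$ shows that the target $(n-2)2^{m-1}+2$ follows once one establishes the sub-claim $\hom(T_m,G_i)\ge(|G_i|-2)2^{m-1}+2$ for each $i$.

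I would attack the sub-claim by combining two ingredients. First, a further application of the Fact inside each $G_i$: split $G_i$ into its longest path $P^{*}$ (of length $d+\max(a_j)$) and the shorter of its two legs at the branching vertex; the intersection is a single vertex, so for $m\ge 2$ this gives $\hom(T_m,G_i)\ge\hom(T_m,P^{*})+\hom(T_m,L_{\min})$, and the second summand is at least $2$ by bipartiteness of $T_m$. Second, Theorem~\ref{into paths}(ii) provides $\hom(T_m,P_k)\ge\hom(P_m,P_k)$, and the Chebyshev-type recursion derivable from the tree-walk algorithm of Section~\ref{treewalk} yields $\hom(P_m,P_k)=k\cdot 2^{m-1}-C_m$ for $k\ge m$ with $C_m$ a small constant depending only on $m$. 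This already suffices in the ``generic'' case when the shorter leg at the branching vertex of $G_i$ has length at least two.

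The main obstacle is when both short legs at a branching vertex have length $1$, so that $G_i$ is the exceptional starlike tree $Y_{1,1,\ell}$ flagged by Theorem~\ref{minimality-path}; there the sub-claim can genuinely fail for large $m$, and one must instead argue that the extra homomorphisms of $T_m\to T_n$ whose image uses legs at both $x$ and $y$ -- which are captured neither by $G_1$ alone nor by $G_2$ alone -- make up the deficit. I would handle this by a direct computation using the tree-walk algorithm applied to the full double-starlike structure, observing that the hom-vector ${\bf h}(T_m,v,T_n)$ inherits the Hadamard-product structure at both $x$ and $y$. The short-leg hypothesis $a_1+a_2+b_1+b_2\le 7$ restricts the leg-length vector $(a_1,a_2,b_1,b_2)$ to only finitely many admissible configurations, so this residual step is a bounded case analysis; in each configuration the tree-walk algorithm furnishes the exact value of $\hom(T_m,T_n)$ as a polynomial in a few path-homomorphism counts, which is then compared to $(n-2)2^{m-1}+2$ using the bounds above.
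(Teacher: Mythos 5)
Your reduction step is fine as far as it goes: writing $T_n=G_1\cup G_2$ with $G_1\cap G_2=xPy$, bounding $\hom(T_m,xPy)\leq(d-1)2^{m-1}+2$ by Theorem~\ref{of stars}, and reducing to the sub-claim $\hom(T_m,G_i)\geq(|V(G_i)|-2)2^{m-1}+2$ for the two starlike halves is the same kind of inclusion--exclusion the paper uses in its Reduction Lemma. But the sub-claim is exactly where the whole difficulty of the lemma sits, and your argument for it collapses. Quantitatively: you assert $\hom(P_m,P_k)=k\cdot 2^{m-1}-C_m$ with $C_m$ ``small''. The constant $C_m$ depends only on $m$ for $k$ large, but it is of order $\sqrt{m}\,2^{m-1}$ (the expected overshoot of a random walk past the ends), and for fixed $k$ the count $\hom(P_m,P_k)\leq k\bigl(2\cos\tfrac{\pi}{k+1}\bigr)^{m-1}$ is even exponentially smaller than $2^{m-1}$. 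So no lower bound routed through homomorphisms into \emph{paths} can ever give $(|V(G_i)|-2)2^{m-1}+2$: paths themselves do not satisfy an inequality of this shape. Indeed, even if you replace your lower bounds by the best conceivable ones, your split $G_i=P^{*}\cup L_{\min}$ with one-vertex intersection yields at most $\hom(S_m,P^{*})+\hom(S_m,L_{\min})=(|V(G_i)|-3)2^{m-1}+4$, which is strictly below the target for $m\geq 3$; with the actual bound $\hom(T_m,P^{*})\geq\hom(P_m,P^{*})$ you fall short by $\Theta(\sqrt{m})\,2^{m-1}$. The ``extra'' $2^{m-1}$ per short leg has to be extracted from the degree-$3$ vertex itself, not from a path inside $G_i$. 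The residual case analysis is also not bounded: the leg lengths are finitely constrained, but $m$, the tree $T_m$, and the length of $xPy$ are not, so ``the tree-walk algorithm furnishes the exact value'' is a computation scheme, not a proof; moreover the problematic hosts are not only $Y_{1,1,\ell}$ but every starlike half of spectral radius below $2$ (e.g.\ $Y_{1,2,\ell}$ for small $\ell$), for which the sub-claim genuinely fails for large $m$.

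The paper avoids all of this by not decomposing at all: it applies the entropy bound of Theorem~\ref{markov} directly to $T_n$, with the Markov chain that moves with probability $\tfrac12$ along degree-$2$ vertices and, at the two degree-$3$ vertices, with probability $\tfrac12$ along the path $xPy$ and $\tfrac14$ into each leg. Then $H(P|Q)=\log 2$, and a short computation shows $H(Q)+2\bigl(H(D|Q)-H(P|Q)\bigr)\geq\log\bigl(n-2+\tfrac18\bigr)$ precisely when the number $t$ of off-path degree-$2$ vertices satisfies $t\lesssim 3.24$ --- this is where the hypothesis ``at most $3$'' enters --- giving $\hom(T_m,T_n)>\bigl(n-2+\tfrac18\bigr)2^{m-1}$ and hence the stated bound for all $T_m$ at once. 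If you want to keep a decomposition-style proof, you would first need an analogue of the paper's Lemma~\ref{3-leaves} (itself proved by the same Markov-chain method) for the starlike pieces, and some separate treatment of the short-leg configurations; as written, your proposal is missing the key idea that supplies the $2^{m-1}$-per-vertex gain at branch vertices.
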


\begin{proof} We can assume that $m\geq 4$, otherwise the statement is trivial.
We prove the slightly stronger inequality
$$\hom(T_m,T_n)> \left(n-2+\frac{1}{8}\right)2^{m-1}.$$
If $m\geq 4$, then this implies that
$$\hom(T_m,T_n)>(n-2)2^{m-1}+1$$
or equivalently,
$$\hom(T_m,T_n)\geq (n-2)2^{m-1}+2.$$
To prove this statement we use Theorem~\ref{markov} with a suitable Markov
chain.
Let $p_{ij}=\frac{1}{2}$ for $(i,j)\in E(T_n)$ if $i$ has degree $2$. Naturally,
$ p_{ij}=1$ if $(i,j)\in E(G)$ and $i$ is a leaf. Finally, if $i\in \{x,y\}$,
  $j\in xPy$ then $p_{ij}=\frac{1}{2}$ and if $i\in \{x,y\}$, $j\notin xPy$
then $p_{ij}=\frac{1}{4}$. 

Let $r$ be the number of vertices of $xPy$. Then $n=r+t+4$. Let $N=4r+2t+4$.
Then the stationary distribution is the following: $q_i=\frac{4}{N}$ if $i\in
xPy$, $q_i=\frac{2}{N}$ if $i\notin xPy$, but has degree $2$ and finally, $q_i=
\frac{1}{N}$ if $i$ is a leaf.

Then
$$H(P|Q)=\frac{N-12}{N}\log 2+\frac{8}{N}\left(\frac{1}{2}\log 2+\frac{1}{2}
\log 4\right)=\log 2.$$
On the other hand,
$$H(Q)+2(H(D|Q)-H(P|Q))=$$
$$=\left(\frac{4r}{N}\log \frac{N}{4}+\frac{2t}{N}\log
\frac{N}{2}+\frac{4}{N}\log \frac{N}{1}\right)+2\frac{8}{N}\left(\log
3-\frac{3}{2}\log 2\right)=$$
$$=\log \frac{N}{4}+\frac{2t}{N}\log 2+\frac{16(\log 3-\log
  2)}{N}.$$
Note that
$$\log \left(n-2+\frac{1}{8}\right)-\log \frac{N}{4}\leq
\int_{N/4}^{n-2}\frac{1}{x}dx\leq \frac{n-2+\frac{1}{8}-N/4}{N/4}=$$
$$=\frac{4}{N}\left(\frac{t}{2}+1+\frac{1}{8}\right)=
\frac{1}{N}\left(2t+\frac{9}{2}\right).$$  
Hence if 
$$\frac{1}{N}\left(2t+\frac{9}{2}\right)\leq \frac{2t}{N}\log 2+\frac{16(\log
  3-\log 2)}{N},$$
then
$$\log \left(n-2+\frac{1}{8}\right)\leq H(Q)+2(H(D|Q)-H(P|Q)),$$
consequently
$$\hom(T_k,G)\geq \exp(H(Q)+2H(D|Q)+(m-3)H(P|Q))> \left(n-2+\frac{1}{8}\right)2^{m-1}.$$
The above inequality is satisfied if
$$t\leq \frac{8\log \frac{3}{2}-\frac{9}{4}}{1-\log 2}\approx 3.238.$$
This proves the statement of the theorem.
\end{proof}

\begin{Lemma} \label{weak-4leaves2} Let $T_n$ be a tree obtained from a path on
$n-8$ vertices by gluing one-one $P_5$ at the middle vertices to both ends
of the path $P_{n-8}$ (see Fig.~\ref{two}). Then for any tree $T_m$ on $m$
vertices we have 
$$\hom(T_m,T_n)\geq (n-1)2^{m-1}.$$
\end{Lemma}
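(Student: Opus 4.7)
My plan is to apply Theorem~\ref{markov} to $T_n$ with a biased Markov chain of the same flavor as the one used in the proof of Lemma~\ref{weak-4leaves}. Let $x = c_1$ and $y = c_{n-8}$ be the two degree-$3$ vertices of $T_n$ (the endpoints of the central path $P_{n-8}$). I would define $P = (p_{ij})$ by setting $p_{ij} = 1/2$ at every degree-$2$ vertex, $p_{ij} = 1$ at every leaf, and biasing toward the central path at $x$ and $y$: namely $p_{x,v} = 1/2$ where $v$ is the unique neighbor of $x$ on the central path, and $p_{x,u} = 1/4$ for each of the two off-path neighbors $u$ of $x$, and symmetrically at $y$.

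A direct solution of the balance equations gives $q_i = 1/(n-5)$ for each central-path vertex (including $x$ and $y$), $q_i = 1/(2(n-5))$ for each of the four off-path degree-$2$ vertices, and $q_i = 1/(4(n-5))$ for each of the four leaves. From this one computes $H(P|Q) = \log 2$, $H(Q) = \log(n-5) + 4\log 2/(n-5)$, and $H(D|Q) - H(P|Q) = \log(9/8)/(n-5)$. Applying Theorem~\ref{markov} to any tree $T_m$ with $\ell \geq 2$ leaves then yields
\[
\hom(T_m,T_n) \geq (n-5)\exp\!\left(\tfrac{4\log 2 + \ell\log(9/8)}{n-5}\right)2^{m-1},
\]
so the claim $(n-1)2^{m-1}$ reduces to the elementary inequality $(n-5)\exp\bigl((4\log 2 + \ell\log(9/8))/(n-5)\bigr) \geq n-1$.

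The main obstacle I foresee is that the constant $4\log 2 + 2\log(9/8) \approx 3.01$ is smaller than $4$, and since $(n-5)\log(1+4/(n-5)) \to 4$ as $n\to\infty$, for small $\ell$ (most notably $T_m = P_m$, where $\ell=2$) and large $n$ the resulting elementary inequality actually fails with this particular chain. To overcome this I would either refine the Markov chain by also biasing each off-path degree-$2$ vertex toward its pendant leaf -- this enlarges $H(D|Q) - H(P|Q)$, though at the cost of a slight decrease in $H(P|Q)$, and a careful optimization of the two bias parameters should give the bound uniformly; or alternatively handle small-leaf $T_m$ separately via the tree-walk algorithm of Section~\ref{treewalk}, using that $T_n$ contains the path $P_{n-4}$ with two pendant copies of $P_3$ attached at interior vertices, and estimating the extra contribution to $\hom(T_m, T_n)$ from walks that traverse those pendant structures. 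The delicate part is verifying the sharpened inequality uniformly in $m$, $n$, and $\ell \geq 2$.
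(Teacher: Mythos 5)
Your entropy computations for your chain are correct ($H(P|Q)=\log 2$, $H(Q)=\log(n-5)+\tfrac{4\log 2}{n-5}$, $H(D|Q)-H(P|Q)=\tfrac{\log(9/8)}{n-5}$), but, as you observe yourself, the resulting bound $(n-5)\exp\bigl(\tfrac{4\log 2+\ell\log(9/8)}{n-5}\bigr)2^{m-1}$ falls short of $(n-1)2^{m-1}$ precisely in the critical case $\ell=2$ (for instance $T_m=P_m$) once $n$ is moderately large, so the lemma is not proved and everything rests on the two sketched repairs, neither of which is carried out. The first repair goes in the wrong direction and cannot work: since $\hom(K_2,T_n)=2(n-1)$, the target inequality is an equality at $m=2$, so any single chain fed into Theorem~\ref{markov} uniformly in $m$ must have $H(P|Q)\geq\log 2$, otherwise for $T_m=P_m$ the bound $\exp\bigl(H(Q)+2H(D|Q)+(m-3)H(P|Q)\bigr)$ loses to $(n-1)2^{m-1}$ as soon as $m$ is large. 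Biasing an off-path degree-$2$ vertex toward its pendant leaf strictly lowers its conditional entropy and, by reversibility, increases the stationary mass of the leaf, which contributes zero entropy; the only vertices with entropy to spare are the two degree-$3$ vertices, capped at $\log 3$ and carrying total stationary mass $O(1/n)$. A short computation shows that in your two-parameter family any strictly leafward bias at the degree-$2$ vertices forces $H(P|Q)<\log 2$ for every choice of the degree-$3$ bias, so no optimization of those two parameters gives the bound uniformly in $m$; and the alternative you mention for small-$\ell$ trees is exactly where the content of the lemma lies and is left unspecified.

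The Markov-chain route can in fact be salvaged, but with the opposite bias: push the leg vertices toward the central path. Take the reversible chain whose unnormalized stationary weights are $1$ on the four leaves, $4$ on the four off-path degree-$2$ vertices, $8$ on the two degree-$3$ vertices and $4$ on the central path (edge flows: $1$ on leaf edges, $3$ on the inner leg edges, $2$ on all path edges). Then $H(P|Q)=\log 2+\tfrac{8\log(32/27)}{4n-4}$, $H(D|Q)-H(P|Q)=\tfrac{20\log(9/8)}{4n-4}$ and $H(Q)=\log(n-1)-\tfrac{8\log 2}{4n-4}$, so Theorem~\ref{markov} gives $\hom(T_m,T_n)\geq(n-1)2^{m-1}\exp\bigl(\tfrac{8(m-1)\log(32/27)+20\ell\log(9/8)-8\log 2}{4n-4}\bigr)$, and the exponent is already positive at $(m,\ell)=(3,2)$ and increases in $m$ and $\ell$; the cases $m\leq 2$ are trivial. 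The paper itself proves the lemma differently and more simply, by induction on $m$ through the recursions of Section~\ref{treewalk}: by the automorphisms of $T_n$, the hom-vector of any rooted tree into $T_n$ has the shape $(x,x,y,y,z,a,\dots,b,z,y,y,x,x)$, one checks $z\geq 2x$ once the rooted tree has at least two vertices, and attaching the deleted leaf back (multiplying by degrees) yields $\hom(T_m,T_n)=4x+8y+6z+2(a+\cdots+b)\geq 2\hom(T_{m-1},T_n)$, so induction from $\hom(K_2,T_n)=2(n-1)$ finishes; this pointwise comparison is what survives the tightness at small $m$, where a global entropy estimate with the wrong chain necessarily loses the constant you need.
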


\begin{proof}  We will show by induction on $m$ that 
\begin{align}\label{three}
\hom(T_m,T_n)\geq (n-1)2^{m-1}.
\end{align}
Let $v$ be any leaf of $T_m$ with unique neighbor $u$
and let $T_{m-1}=T_m-v$ be a rooted tree with root $u$.

\begin{figure}[h!] 
\begin{center}
\scalebox{.65}{\includegraphics{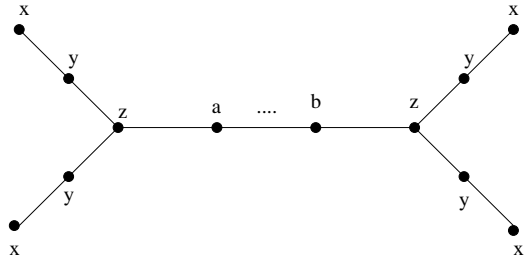}}\caption{Special double
    starlike trees.\label{two}}     
\end{center}
\end{figure}

Let us use the hom-vectors of the Fig.~\ref{two}, that is
\begin{align*}
 {\bf h}(T_{m-1},u,T_n)= (x,x,y,y,z,a,\ldots,b,z,y,y,x,x).
\end{align*}
Now suppose that
$$\hom(T_{m-1},T_n)\geq (n-1)2^{m-2}.$$
It is easy to see by induction that  $z>2x$ if $T_{m-1}$ has at least two
vertices. 
By tree-walk algorithm, we have 
\begin{align*}
 \hom(T_{m}(v),T_n)&=4x+8y+6z+2(a+\cdots+b)\\
 &\geq 8x+8y+4z+2(a+\cdots+b)\\
 &=2\hom(T_{m-1},G)\\
 &\geq(n-1)2^{m-1},
\end{align*}
which shows~\eqref{three}.
\end{proof}

Next we introduce a transformation which we will call LS-switch (Large-Small
switch). 

\begin{figure}[h!]
\begin{center}
\scalebox{.65}{\includegraphics{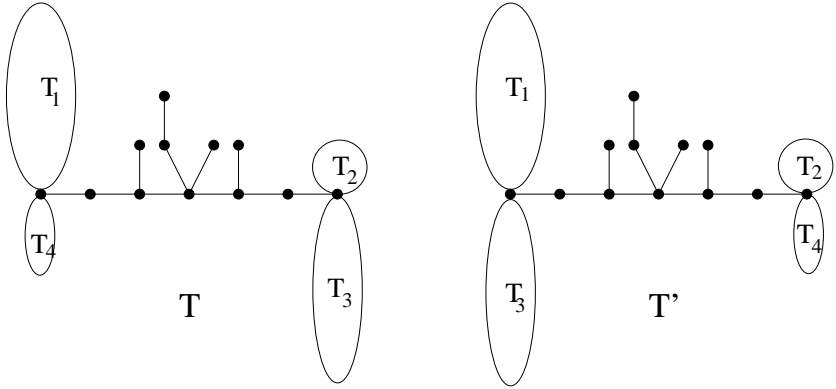}}\caption{LS-switch.}   
\end{center}
\end{figure}

\begin{Def}[LS-switch] Let $R(u,v)$ be a tree with specified vertices $u$ and
$v$ such that the distance of $u$ and $v$ is even and $R$ has an
automorphism of order $2$ which exchanges the vertices $u$ and $v$. Let
$T_1(x),T_2(x),T_3(y),T_4(y)$ be rooted trees such that $T_2(x)$ is the
rooted subtree of $T_1(x)$ and $T_4(y)$ is the rooted subtree of $T_3(y)$. 
Let the tree $T$ be obtained from the trees
$R(u,v),T_1(x),T_2(x),T_3(y),T_4(y)$  by attaching a copy of $T_1(x),T_4(y)$ to
$R(u,v)$ at vertex $u$ and a copy of $T_2(x),T_3(y)$ at vertex $v$.  Assume
that the tree $T'$ is obtained from the trees
$R(u,v),T_1(x),T_2(x),T_3(y),T_4(y)$ by attaching a copy of $T_1(x),T_3(y)$ to
$R(u,v)$ at vertex $u$ and a copy of $T_2(x),T_4(y)$ at vertex $v$. Then $T'$
is the LS-switch of $T$.   Observe that there is a natural bijection between
the color classes of $T'$ and $T$.
\end{Def}

A particular case of the LS-switch is when $R(u,v)$ is a path of even length
with end  vertices $u$ and $v$, $T_2(x)$ and $T_4(y)$ are one-vertex 
rooted trees, then $T'$ is obtained from $T$ by an even-KC-transformation, 
i.e., KC-transformation according to a path of even length. Another useful 
special case is when $R(u,v)$ is a tree where we attach an arbitrary tree to the
middle vertex of the path on $3$ vertices and $u$ and $v$ are the end vertices
of the path (in this case the automorphism simply switches $u$ and $v$), and  
$T_2(x),T_4(y)$ are the rooted trees with $1$ vertex, in this case we
get back to a particular case of the original {\em
Kelmans-transformation}~\cite{kk}.

Actually, the following theorem with respect to the
LS-switch is also true and its proof is just a trivial extension of the even
case of Theorem~\ref{of starlike}.

\begin{Th} Let $T'$ be the LS-switch of $T$. Let $H$ be an arbitrary
tree. Then
$$\hom(H,T)\leq \hom(H,T').$$
\end{Th}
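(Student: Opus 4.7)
The plan is to mimic the proof of the $k$-even case of Theorem~\ref{of starlike}, since the LS-switch generalizes the even KC-transformation in a direct way: the path is replaced by an arbitrary tree $R$ with the swap-involution $\sigma$, and the single attached branches $A,B$ are replaced by the nested pairs $T_2\subseteq T_1$ (at $u,v$) and $T_4\subseteq T_3$ (at $u,v$). Let $\phi:V(T)\to V(T')$ be the natural bijection: it is the identity on $V(R)\cup V(T_1^{(u)})\cup V(T_2^{(v)})$, and it sends the copy of $T_3$ attached at $v$ in $T$ to the copy at $u$ in $T'$ (and the copy of $T_4$ at $u$ in $T$ to the copy at $v$ in $T'$) via the canonical identification of abstract vertices.

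For a rooted tree $(H,w)$ I would define ${\bf h}(H,w,T)\preceq {\bf h}(H,w,T')$ to mean: (i) $h'_{\phi(s)}\ge h_s$ for every non-$R$ vertex $s$; (ii) for every $r\in V(R)$ on the ``$u$-side'' of $R$ or fixed by $\sigma$, $h'_r\ge h_r$ and $h'_r\ge h_{\sigma(r)}$; (iii) $h'_r+h'_{\sigma(r)}\ge h_r+h_{\sigma(r)}$ for every pair $\{r,\sigma(r)\}$. These are the direct analogues of the five inequalities $a'_m\ge a_m$, $b'_n\ge b_n$, $p'_i\ge p_i$, $p'_i\ge p_{k-i}$, $p'_i+p'_{k-i}\ge p_i+p_{k-i}$ used in the even-$k$ proof.

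I would then prove by induction on $|V(H)|$ that ${\bf h}(H,w,T)\preceq {\bf h}(H,w,T')$ for every rooted tree $(H,w)$, using the two recursions of the Tree-walk algorithm. The base case (a single vertex) is trivial. For Recursion~1 (Hadamard product at a non-leaf root), the non-$R$ inequalities are preserved entrywise, while the paired inequalities at $\{r,\sigma(r)\}$ are preserved by Lemma~\ref{funny} applied to $(h'_r,h'_{\sigma(r)})$ versus $(h_r,h_{\sigma(r)})$. For Recursion~2 (right-multiplication by the adjacency matrix at a leaf root), I would compute each new entry as a sum over a neighborhood and verify the required inequalities term by term; at interior $R$-vertices the neighborhoods in $T$ and $T'$ coincide, so the inequalities transfer using the $\sigma$-symmetry $N_R(\sigma(r))=\sigma(N_R(r))$. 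Summing the entries of the final hom-vector then gives $\hom(H,T)\le \hom(H,T')$.

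The main obstacle is the Recursion~2 step at the attachment vertices $u$ and $v$, which is the analogue of the step at $p'_0$ versus $p'_k$ in the even-KC proof. In $T'$ the neighbor set at $u$ is $N_R(u)\cup N_{T_1}(x)\cup N_{T_3}(y)$, while at $v$ it is $N_R(v)\cup N_{T_2}(x)\cup N_{T_4}(y)$; the inclusions $T_2\subseteq T_1$ and $T_4\subseteq T_3$ arrange the abstract vertex sets so that the surplus at $u$ in $T'$ exactly compensates the deficit at $v$ in $T'$ in the paired-sum inequality (iii), while producing a strict margin for the max-inequality (ii) at $u$. Working out this balance, which mirrors the verification at $p'_0$ versus $p'_k$ in the even-KC case, completes the induction.
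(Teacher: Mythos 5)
Your inductive invariant is not correct: condition (i) is false, and since it is asserted for every rooted tree $(H,w)$ as the inductive hypothesis, the whole induction collapses. Under your bijection $\phi$ the copy of $T_2$ attached at $v$ is compared with itself; but in passing from $T$ to $T'$ the branch sitting next to it at $v$ changes from $T_3$ to its subtree $T_4$, so the environment of these vertices shrinks and the entrywise inequality goes the wrong way (the same happens for the $T_4$-vertices, which sit beside $T_1$ at $u$ in $T$ but beside the smaller $T_2$ at $v$ in $T'$). Concretely, let $R(u,v)$ be the path $u\,w\,v$, let $T_1(x)$ be a path on three vertices rooted at an end, $T_2(x)$ its rooted subtree consisting of the first edge, $T_3(y)$ a single edge rooted at an end, and $T_4(y)$ the one-vertex tree. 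Then $T$ is the path $a_2a_1uwv$ with two pendant vertices $b_1$ (from $T_2$) and $c_1$ (from $T_3$) at $v$, while $T'$ is the path $a_2a_1uwvb_1$ with the pendant vertex $c_1$ at $u$, and $\phi(b_1)=b_1$. For $H=P_3$ rooted at an end vertex, the number of homomorphisms sending the root to $b_1$ equals $\deg(v)$, which is $3$ in $T$ but $2$ in $T'$; so $h'_{\phi(b_1)}<h_{b_1}$, even though of course $\hom(P_3,T)=\hom(P_3,T')$ here, so only your invariant, not the theorem, is contradicted.

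The repair, and what the paper's proof does, is to compare the attached trees in nested pairs rather than entrywise: each vertex $a$ of $T_2$ has two incarnations, one inside the copy of $T_1$ and one inside the copy of $T_2$, and the invariant is $t'_1(a)\geq t_1(a),\,t_2(a)$ together with $t'_1(a)+t'_2(a)\geq t_1(a)+t_2(a)$; similarly each vertex of $T_4$ is paired with its copy inside $T_3$, and only the vertices of $T_1\setminus T_2$ and $T_3\setminus T_4$, which have a single incarnation, obey a one-sided inequality $t'_1(x)\geq t_1(x)$, $t'_3(y)\geq t_3(y)$. Your conditions (ii) and (iii) on the $R$-vertices, pairing $r$ with $\sigma(r)$, are the right analogue of $p'_i\geq p_i,\,p_{k-i}$ and $p'_i+p'_{k-i}\geq p_i+p_{k-i}$ and agree with the paper; with the paired form of the invariant on $T_1,\dots,T_4$ as well, Lemma~\ref{funny} again handles Recursion~1 and a case analysis at $u$, $v$ and along the $u$--$v$ path handles Recursion~2, exactly as in the even case of Theorem~\ref{of starlike}. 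As written, however, your step (i) cannot be ``worked out'' because it is simply not true, so the proposal has a genuine gap.
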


\begin{proof}(Sketch.) The unique shortest path connecting all  $T_i$'s in $T$
(or $T'$) will be denoted by $P_{2k}$, a path of even length with vertices
labeled consecutively by $0,1,\ldots,2k$. Without loss of generality, we can
assume that $0\in V(T_1)$. For $1\leq j\leq 2k-1$, let $A_j$ denote the
component of $T$ that contains the vertex $j$ when we delete all the edges of
$P_{2k}$. By the definition of LS-switch, the subtrees $A_j$ and $A_{2k-j}$
are isomorphic, so we can identify $V(A_j)\setminus\{j\}$ with
$V(A_{2k-j})\setminus\{2k-j\}$. We will also consider $V(T_2)\setminus\{0,2k\}$
as the subset of $V(T_1)\setminus\{0,2k\}$ and $V(T_4)\setminus\{0,2k\}$ as
the subset of $V(T_3)\setminus\{0,2k\}$. 

Let $v$ be a  vertex of $H$. 
For $0\leq s\leq 2k$, $u\in V(T_i)\setminus\{0,2k\}$ ($1\leq i\leq4$) and
$a\in V(A_j)\setminus\{j\}$ ($1\leq j\leq 2k-1$), we define 
$$ p_s:=|\{\{f\in\Hom(H,T) : f(v)=m\}|,\,\, p'_s:=|\{\{f\in\Hom(H,T') :
f(v)=m\}|, $$
$$t_i(u):=|\{f\in\Hom(H,T) : f(v)=u\}|,\,\, t'_i(u):=|\{f\in\Hom(H,T') :
f(v)=u\}|, $$ 
and 
$$p_j(a):=|\{f\in\Hom(H,T) : f(v)=a\}|,\,\, p'_j(a):=|\{f\in\Hom(H,T') :
f(v)=a\}|. $$
We prove by induction that the following inequalities are preserved by the
steps of the tree-walk algorithm. For any
$0\leq s\leq k$, $a\in V(A_j)\setminus\{j\}$ ($1\leq j\leq k$), $u\in
V(T_2)\setminus\{0,2k\}$, $w\in V(T_4)\setminus\{0,2k\}$, $x\in
V(T_1)\setminus V(T_2)$ and $y\in V(T_3)\setminus V(T_4)$ we have
\begin{align}
p'_{k-s}+p'_{k+s}&\geq p_{k-s}+p_{k+s}\,\,\text{and}\,\,p'_{k-s}\geq p_{k+s},
p_{k-s}\label{ine0'}\\ 
p'_{j}(a)+p'_{2k-j}(a)&\geq
p_{j}(a)+p_{2k-j}(a)\,\,\text{and}\,\,p'_{j}(a)\geq p_{2k-j}(a),
p_{j}(a)\label{inequ1'}\\ 
t'_1(u)+t'_2(u)&\geq t_1(u)+t_2(u)\,\,\text{and}\,\,t'_1(u)\geq
t_1(u),t_2(u)\label{inequ2'}\\ 
t'_3(w)+t'_4(w)&\geq t_3(w)+t_4(w)\,\,\text{and}\,\,t'_3(w)\geq
t_3(w),t_4(w)\label{inequ3'}\\ 
t'_1(x)&\geq t_1(x)\,\,\text{and}\,\,t'_3(y)\geq t_3(y)\label{inequ4'}.
\end{align}
We only need to check that the two operations in the tree-walk algorithm
preserve all the above inequalities, which is routine and left to the reader. 
\end{proof}

\begin{Lemma} \label{4-leaves} Let $T_n$ be a tree on $n$ vertices with exactly
four leaves.  Then for any tree $T_m$ on $m$ vertices we have 
$$\hom(T_m,T_n)\geq (n-2)2^{m-1}+2,$$
where $n$ is the number of vertices of $T_n$.
\end{Lemma}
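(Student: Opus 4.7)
The plan is to proceed by the structure of $T_n$. Applied to the identity $\sum_v(d_v-2)=-2$, the assumption that $T_n$ has exactly four leaves forces $T_n$ to be either \emph{starlike} (one vertex $v_0$ of degree $4$ with four pendant arms) or \emph{double starlike} (two vertices $x,y$ of degree $3$ joined by a middle path of length $\mu$, with pendants of lengths $a\geq b$ at $x$ and $c\geq d$ at $y$).

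For the starlike case I would apply Theorem~\ref{markov} to the reversible Markov chain with transition probability $\tfrac14$ out of $v_0$, $\tfrac12$ between adjacent degree-$2$ vertices, and $1$ at each leaf. A routine computation gives $q_{v_0}=\tfrac{2}{n-1}$, $q_i=\tfrac{1}{n-1}$ at degree-$2$ vertices, and $q_i=\tfrac{1}{2(n-1)}$ at leaves; from these $H(P\mid Q)=\log 2$ and $\exp(H(Q))=n-1$, whence
$$\hom(T_m,T_n)\geq \exp\bigl(H(Q)+(m-1)H(P\mid Q)\bigr)=(n-1)\cdot 2^{m-1},$$
which exceeds $(n-2)2^{m-1}+2$ for $m\geq 2$; the case $m=1$ is the trivial equality $\hom(T_1,T_n)=n$.

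For the double starlike case I would distinguish three subcases. If $a+b+c+d\leq 7$ (i.e.\ at most three degree-$2$ vertices lie off the middle path), Lemma~\ref{weak-4leaves} applies directly. If $(a,b,c,d)=(2,2,2,2)$, Lemma~\ref{weak-4leaves2} gives the even stronger bound $(n-1)2^{m-1}$. In every remaining case I would first invoke the LS-switch theorem just established: a sequence of LS-switches between $x$ and $y$ can only decrease $\hom(T_m,\cdot)$, so it suffices to prove the bound for the ``balanced'' pairing in which, after sorting $a\geq b\geq c\geq d$, the longest and shortest pendants lie at one endpoint of the middle path and the middle two at the other. From this normalized form I would proceed by induction on $n$: pick a leaf $L$ of a pendant of length at least $3$, set $G_1:=T_n-L$ (still a 4-leaf tree on $n-1$ vertices), choose $G_2$ to be a maximal path in $T_n$ containing $L$, and apply the Fact
$$\hom(T_m,T_n)\geq \hom(T_m,G_1)+\hom(T_m,G_2)-\hom(T_m,G_1\cap G_2).$$
The inductive hypothesis gives $\hom(T_m,G_1)\geq(n-3)2^{m-1}+2$, while the remaining two terms are hom-counts into paths, computable by the Tree-walk algorithm of Section~\ref{treewalk}.

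The main obstacle is closing this inductive step: one needs $\hom(T_m,G_2)-\hom(T_m,G_1\cap G_2)\geq 2^{m-1}$, a statement that holds with equality once $G_2$ has length exceeding $\diam(T_m)$ but may fail when a pendant is short relative to $T_m$. When the balanced normalization still leaves a pendant too short for an arbitrary $T_m$, a case-dependent adjustment will be required --- either a tailored Markov-chain estimate with a modified transition kernel along the long pendants in the spirit of Lemma~\ref{weak-4leaves}, or a second decomposition through a longer subtree of $T_n$ so that the telescoping of the Fact absorbs the inductive cost.
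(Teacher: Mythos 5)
Your reduction of the structure (a degree-$4$ spider or a double-starlike tree with two degree-$3$ hubs, via $\sum_v(d_v-2)=-2$), your Markov-chain bound in the spider case (which is exactly Theorem~\ref{degree-markov}, giving $(n-1)2^{m-1}$), and your use of Lemma~\ref{weak-4leaves} and Lemma~\ref{weak-4leaves2} for the short-arm configurations all match the paper. The problem is that the heart of the lemma is precisely the remaining case --- some pendant arm of length at least $3$ --- and there your argument does not close. Two concrete issues. First, the LS-switch ``balancing'' step only re-pairs the four arms between the two hubs; it never changes the multiset of arm lengths, so it cannot bring you back into the range of Lemma~\ref{weak-4leaves} or Lemma~\ref{weak-4leaves2} (and, as defined, it is not even available when the middle path $xPy$ has odd length, since the switch requires the two attachment vertices to be at even distance). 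Second, your inductive step via the Fact needs $\hom(T_m,G_2)-\hom(T_m,G_1\cap G_2)\geq 2^{m-1}$ with $G_2=P_k$ and $G_1\cap G_2=P_{k-1}$ for some fixed $k\leq n$; but $m$ is arbitrary, and for $m$ large this difference is of order $\lambda_k^{\,m}$ with $\lambda_k=2\cos\bigl(\pi/(k+1)\bigr)<2$, hence exponentially smaller than $2^{m-1}$ (take $T_m=P_m$). You acknowledge this obstacle, but the suggested repairs (a modified kernel, a second decomposition) are not carried out, and no argument uniform in $m$ is given, so the lemma is not proved.

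The paper closes exactly this case by a different mechanism: fix $T_m$, and among all trees on $n$ vertices with four leaves and two degree-$3$ vertices choose a minimizer $\overline{T_n}$ of $\hom(T_m,\cdot)$ whose two degree-$3$ vertices are as far apart as possible. If some arm had length $b>2$, one cuts that arm at the vertex $u$ at distance $2$ (if $b$ is even) or $1$ (if $b$ is odd) from its leaf --- so that $d(u,x)$ is even --- and realizes $\overline{T_n}$ as the LS-switch of a tree $T_n^*$ in which the stub near the leaf and the rest of the tree have been exchanged; then $\hom(T_m,T_n^*)\leq\hom(T_m,\overline{T_n})$, while $T_n^*$ is again a four-leaf tree with two degree-$3$ vertices at strictly larger distance, contradicting the extremal choice. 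Hence the minimizer has all arms of length at most $2$ and falls under Lemma~\ref{weak-4leaves} or Lemma~\ref{weak-4leaves2}. Some substitute for this extremality-plus-inverse-LS-switch reduction (or another bound valid uniformly in $m$) is what your proposal is missing.
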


\begin{proof} If $T_n$ has a vertex of degree $4$ then by
  Theorem~\ref{degree-markov} we have
$$\hom(T_m,T_n)\geq 2(n-1)C^{m-2},$$
where
$$C=\left(\prod_{i=1}^nd_i^{d_i}\right)^{1/2e}=2.$$
Hence
$$\hom(T_m,T_n)\geq (n-1)2^{m-1}\geq (n-2)2^{m-1}+2.$$
Now assume that $T_n$ has two vertices, $x$ and $y$, of degree $3$. Among these
trees ($n$ vertices, $4$ leaves, two vertices of degree $3$) let us choose
$\overline{T_n}$ to be the one for which $\hom(T_m,\overline{T_n})$ is minimal
and among these trees the length of the path is maximal. 

Let the four leaves of $\overline{T_n}$ denoted by $z_1,z_2,z_3,z_4$ such that
$z_1,z_2$ are closer to $x$ than $y$, and $z_3,z_4$ are closer to $y$ than
$x$. Let the number of edges of $xPy,xPz_1,xPz_2,yPz_3,yPz_4$ be $a,b,c,d,e$,
respectively. We show that $\max(b,c,d,e)\leq 2$. Indeed, if say $b>2$ then
$\overline{T_n}$ can be obtained by an LS-switch from a graph $T_n^*$ as follows.  

If $b$ is even, then let $u$ be the unique vertex such that $d(z_1,u)=2$. Then
$uPx=R(u,x)$ is a path of even length. Let $T_2=z_1Pu$. Furthermore, let $T_1$
be the tree spanned by the vertices $x,z_3,z_4$. $T_3=xPz_2$ and
$T_4=\{u\}$. Then $T_2$ is a rooted subtree of $T_1$ and $T_4$ is a rooted
subtree of $T_3$. Now making an inverse LS-swith we obtain $T_n^*$.
We know that 
$$\hom(T_m,\overline{T_n})\geq \hom(T_m,T_n^*)$$
and in $T_n^*$, the vertices of degree $3$, $y$ and $u$, has distance $a+b-2>a$
contradicting the choice of $\overline{T_n}$. 

If $b$ is odd, then  let $u$ be the unique neighbor of $z_1$ and we repeat the
previous argument.  The distance of $u$ and $x$ is even again.

Hence we can assume that $\max(b,c,d,e)\leq 2$. If not all of them are $2$,
then we can use Lemma~\ref{weak-4leaves} to get that
$$\hom(T_m,\overline{T_n})\geq (n-2)2^{m-1}+2.$$
If $b=c=d=e=2$, then we use Lemma~\ref{weak-4leaves2} to obtain that
$$\hom(T_m,\overline{T_n})\geq (n-1)2^{m-1}\geq (n-2)2^{m-1}+2.$$
This proves the statement.
\end{proof}

\begin{proof}[Proof of Theorem~\ref{geq 4 leaves}] The statement immediately
  follows from Lemma~\ref{4-leaves} and Lemma~\ref{reduction}.
\end{proof}

\subsection{Trees with 3 leaves}

\begin{Lemma} \label{3-leaves} (a) Let $n=a+b+c+1$, and $\min(a,b,c)\geq
  2$. Then for any tree $T_m$ on $m$ vertices we have
$$\hom(T_m,Y_{a,b,c})\geq (n-2)2^{m-1}+2.$$
\medskip

(b) Let $n=a+b+2$, and $\min(a,b)\geq 3$.  Then for any tree $T_m$ on $m$
vertices we have 
$$\hom(T_m,Y_{a,b,1})\geq (n-2)2^{m-1}+2.$$
\end{Lemma}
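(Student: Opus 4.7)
The bound $(n-2)2^{m-1}+2$ equals $\hom(S_m,P_n)$, and the cases $m=1,2$ give equality: $\hom(T_1,Y_{a,b,c})=n$ and $\hom(T_2,Y_{a,b,c})=2(n-1)$. For $m\geq 3$, my plan is to induct on $m$: pick a leaf $v$ of $T_m$ with unique neighbour $u$, set $T_{m-1}=T_m-v$, and use the Tree-walk identity
\[
\hom(T_m,Y_{a,b,c})=\sum_{w\in V(Y_{a,b,c})} h_w\,d_w=2\hom(T_{m-1},Y_{a,b,c})+h_{v^*}(T_{m-1},u)-L(T_{m-1},u),
\]
where $h_w$ is the entry at $w$ of the hom-vector ${\bf h}(T_{m-1},u,Y_{a,b,c})$, the vertex $v^*$ is the unique degree-$3$ vertex of $Y_{a,b,c}$, and $L=\sum_{j=1}^{3} h_{\ell_j}$ sums over the three leaves $\ell_1,\ell_2,\ell_3$ of $Y_{a,b,c}$. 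Using the inductive hypothesis $\hom(T_{m-1},Y_{a,b,c})\geq (n-2)2^{m-2}+2$, the target reduces to the auxiliary inequality $L(T_{m-1},u)\leq h_{v^*}(T_{m-1},u)+2$.

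For part (a), assuming $\min(a,b,c)\geq 2$, I would establish by a secondary induction on $|T|$ (following the two recursion steps of the Tree-walk algorithm) the following two invariants, valid for every tree $T$ with $|T|\geq 2$ and every root $u$: (i)~the pointwise inequality $h_{\ell_j}(T,u)\leq h_{v^*}(T,u)$ for each $j$, and (ii)~the sum inequality $L(T,u)\leq h_{v^*}(T,u)$. The base case $T=K_2$ gives $h_w=d_w$, so $h_{\ell_j}=1\leq 3=h_{v^*}$ and $L=3=h_{v^*}$ (equality). Recursion~2 (right-multiplication by $A_{Y_{a,b,c}}$) turns (ii) into the exact equality $L'=h'_{v^*}=\sum_i h_{u_i}$, using crucially that every leaf $\ell_j$ has a unique neighbour $u_j$ distinct from $v^*$---this is where the hypothesis $\min(a,b,c)\geq 2$ enters. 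Recursion~1 (Hadamard product) preserves~(i) termwise, and combining (i) and (ii) across the two factors yields
\[
L'=\sum_j h^{(1)}_{\ell_j}h^{(2)}_{\ell_j}\leq h^{(1)}_{v^*}\sum_j h^{(2)}_{\ell_j}=h^{(1)}_{v^*}L^{(2)}\leq h^{(1)}_{v^*}h^{(2)}_{v^*}=h'_{v^*}.
\]
With (ii) in hand, the main induction closes with slack: $\hom(T_m,Y_{a,b,c})\geq 2\hom(T_{m-1},Y_{a,b,c})\geq (n-2)2^{m-1}+4$, which exceeds the target.

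For part (b), where $Y_{a,b,1}$ has the pendant leaf $\ell_c$ adjacent to $v^*$, Recursion~2 now gives $h'_{\ell_c}=h_{v^*}$, destroying invariant~(ii). The surrogate invariant $h_{\ell_a}(T,u)+h_{\ell_b}(T,u)\leq h_{v^*}(T,u)$ (excluding $\ell_c$) is still preserved by both recursions by the same chain of inequalities, but yields only $h_{v^*}-L\geq -h_{\ell_c}$, which is too weak on its own because $h_{\ell_c}$ is unbounded. My plan is to strengthen the inductive hypothesis so that, in addition to the lower bound on $\hom(T_{m-1},Y_{a,b,1})$, one carries a vector invariant tracking $h_{v^*}$, $h_{\ell_c}$, and the entries at the two near-centre vertices $u_a,u_b$ simultaneously; the hypothesis $\min(a,b)\geq 3$ guarantees at least two buffer degree-$2$ vertices on each long branch, so that after two consecutive recursion steps the accumulated surplus in $\hom(T_{m-1},Y_{a,b,1})-(n-2)2^{m-2}-2$ dominates the pendant contribution $h_{\ell_c}$. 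The main obstacle will be verifying that this enriched system of inequalities passes through the Hadamard step of Recursion~1: as Lemma~\ref{funny} already illustrates, entry-wise multiplication can degrade linear inequalities, so the design of the vector invariant---reminiscent of the coupled inequalities in the proofs of Theorem~\ref{of starlike} and Lemma~\ref{weak-4leaves2}---is the crux of the argument.
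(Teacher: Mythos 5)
The reduction at the start is sound (the identity $\hom(T_m,Y_{a,b,c})=2\hom(T_{m-1},Y_{a,b,c})+h_{v^*}-L$ and the Hadamard step of Recursion~1 both check out), but the verification of your invariant (ii) under Recursion~2 is wrong except in the single case $a=b=c=2$. After multiplying by the adjacency matrix you get $L'=h_{u_a}+h_{u_b}+h_{u_c}$, where $u_x$ is the unique neighbour of the leaf on branch $x$, whereas $h'_{v^*}=h_{w_a}+h_{w_b}+h_{w_c}$, where $w_x$ is the neighbour of $v^*$ on branch $x$; these two sets of vertices coincide only when every branch has length exactly $2$, so the claimed equality $L'=h'_{v^*}$ fails for general $a,b,c\geq 2$. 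Worse, your invariants (i) and (ii) control only the entries at the three leaves and at $v^*$, and say nothing about the interior entries $h_{u_x}$, $h_{w_x}$, so the system is not closed under Recursion~2 (the same objection applies to preserving (i), since $h'_{\ell_j}=h_{u_j}$ is an interior entry of the old vector). To repair this you would need additional invariants — e.g.\ monotonicity of the hom-vector along each branch towards $v^*$, plus something comparing $h_{v^*}$ with sums of pairs of its neighbours to handle the vertex adjacent to $v^*$ — and the preservation of such a strengthened system under both recursions is exactly what is missing. Note also that there is no slack available: for $Y_{2,2,2}$ and a path rooted at an endvertex one has $L=h_{v^*}$ at every step, so any correct proof of (ii) must be tight. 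Part (b) is, by your own admission, only a plan: the ``enriched vector invariant'' that is supposed to absorb the pendant contribution $h_{\ell_c}$ is never specified, and that is precisely the crux there. So neither (a) nor (b) is proved as written.

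For comparison, the paper proves this lemma by a completely different route: it applies the entropy bound of Theorem~\ref{markov} with an explicit reversible Markov chain, viewing $Y_{a,b,c}$ with $\min(a,b,c)\geq 2$ as a subdivision of $Y_{2,2,2}$ (weights $1,4,1,4,1,4,9$, weight $6$ on subdivision vertices) and $Y_{a,b,1}$ with $\min(a,b)\geq 3$ as a subdivision of $Y_{3,3,1}$. In both cases $H(P|Q)=\log 2$, and the two leaf factors contribute an extra $2\bigl(H(D|Q)-H(P|Q)\bigr)$, giving $\hom(T_m,Y)\geq (n-2+\varepsilon)2^{m-1}$ with an explicit $\varepsilon>0$ that beats $(n-2)2^{m-1}+2$. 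You should either adopt that argument or be prepared to design and verify the full system of hom-vector inequalities sketched above; the current proposal does not yet yield the lemma.
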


\begin{proof} 
\noindent (a) 
\begin{figure}[h!]
\begin{center}
\scalebox{.65}{\includegraphics{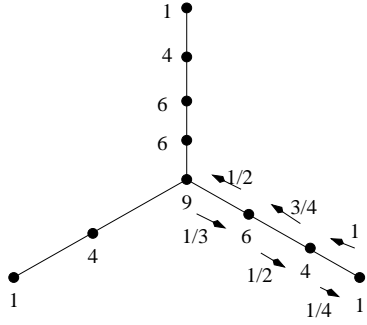}}\caption{$Y_{a,b,c}$
  where $\min(a,b,c)\geq 2$ with a special Markov chain.}     
\end{center}
\end{figure}

We can think to $Y_{a,b,c}$ with $\min(a,b,c)\geq 2$ as follows: we consider
$Y_{2,2,2}$ and we subdivide the edges between the vertex of degree $3$ and
its neighbors a few times. Let us write the weights $1,4,1,4,1,4,9$ to the
vertices of $Y_{2,2,2}$ according to the figure and then let us write weights
$6$ on the new vertices obtained by subdivision. It is easy to check that
there is a unique Markov chain on $Y_{a,b,c}$, where the stationary
distribution is proportional to the weights. (In fact, we write a few
transition probabilities on the figure.)

It is easy to check that $H(P|Q)=\log 2$ and if $N=24+6(n-7)=6(n-3)$, then
$$H(Q)+2(H(D|Q)-H(P|Q))=$$
$$=\left(\frac{9}{N}\log \frac{N}{9}+\frac{12}{N}\log
\frac{N}{4}+\frac{3}{N}\log \frac{N}{1}+\frac{6(n-7)}{N}\log
\frac{N}{6}\right)+$$
$$+2\cdot\frac{12}{N}\left(\log 2-\left(\frac{1}{4}\log 4+\frac{3}{4}\log \frac{4}{3}\right)\right)=$$
$$=\log \frac{N}{6}+\frac{9}{N}\log \frac{6}{9}+\frac{12}{N}\log
\frac{6}{4}+\frac{3}{N}\log \frac{6}{1}+\frac{24}{N}\left(\log
2-\left(\frac{1}{4}\log 4+\frac{3}{4}\log \frac{4}{3}\right)\right)=$$
$$=\log (n-3)+\frac{24}{N}\log \frac{3}{2}.$$
Since 
$$\log (n-2+\varepsilon)-\log
(n-3)=\int_{n-3}^{n-2+\varepsilon}\frac{dx}{x}\leq
\frac{1+\varepsilon}{n-3}=\frac{6}{N}(1+\varepsilon)$$
we can choose $\varepsilon=4\log \frac{3}{2}-1>\frac{1}{2}$ to deduce that
$$\hom(T_m,Y_{a,b,c})\geq (n-2+\varepsilon)2^{m-1}.$$
This is already greater than $(n-2)2^{m-1}+2$ for $m\geq 3$. The statement is
trivial for $m\leq 2$.
\bigskip

\noindent (b)

\begin{figure}[h!]
\begin{center}
\scalebox{.65}{\includegraphics{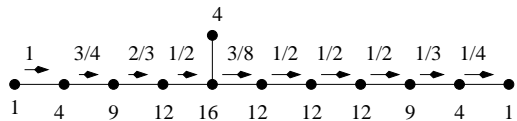}}\caption{$Y_{a,b,1}$
  where $\min(a,b)\geq 3$ with a special Markov chain.}     
\end{center}
\end{figure}

We use completely the same argument as in part (a). We think to $Y_{a,b,1}$ as
a subdivision of $Y_{3,3,1}$ and use the Markov chain on the figure. Again we
have $H(P|Q)=\log 2$ and the sum of the weights is
$N=48+12(n-8)=12(n-4)$. Hence  
$$H(Q)+2(H(D|Q)-H(P|Q))=\log \frac{N}{12}+\frac{12}{N}\left(\frac{11}{3}\log
3-\frac{1}{3}\log 2\right).$$

Since 
$$\log (n-2+\varepsilon)-\log
(n-4)=\int_{n-4}^{n-2+\varepsilon}\frac{dx}{x}\leq
\frac{2+\varepsilon}{n-4}=\frac{12}{N}(2+\varepsilon)$$
we can choose $\varepsilon=\frac{11}{3}\log 3-\frac{1}{3}\log 2-2>1.79$ to
deduce that 
$$\hom(T_m,Y_{a,b,c})\geq (n-2+\varepsilon)2^{m-1}.$$
This is already greater than $(n-2)2^{m-1}+2$ for $m\geq 2$. The statement is
trivial for $m=1$.

\end{proof}

\begin{Rem} Since every Markov chains are reversible on a tree, there is a
natural way to define a new Markov chain on a subdivided edge. Assume that the
probabilities of the stationary distribution were $q_i,q_j$ and
$p_{ij},p_{ji}$ were the transition probabilities at the vertices $i,j$. Then
$q_ip_{ij}=q_jp_{ji}$ (reversibility) and we can put a vertex $r$ with
weight $2q_ip_{ij}$ and $p_{ri}=p_{rj}=1/2$ on the edge $(i,j)$. Then the
new stationary distribution will be proportional to the weights 
$\{q_i\ | i\in V(T)\}\cup \{2q_ip_{ij}\}$.
\end{Rem} 
\bigskip

\noindent \textbf{Theorem~\ref{minimality-path}}. Let $T_n$ be a tree on $n$
vertices. Assume that for a tree $T_m$ we have
$$\hom(T_m,T_n)<\hom(T_m,P_n).$$
Then $T_n=Y_{1,1,n-3}$ and $n$ is even.
\bigskip

\begin{proof} Note that if $T_n$ has at least $4$ leaves then 
  Theorem~\ref{4-leaves} implies that
$$\hom(T_m,T_n)\geq (n-2)2^{m-1}+2=\hom(S_m,P_n)\geq \hom(T_m,P_n)$$
contradicting to the condition of the theorem.
Hence $T_n=Y_{a,b,c}$ for some $a,b,c$. Observe that if one of $a,b,c$ is even
then $Y_{a,b,c}$ can be obtained from $P_n$ by an even-KC-transformation and
then Theorem~\ref{of starlike} implies
$$\hom(T_m,T_n)\geq \hom(T_m,P_n)$$
contradicting to the condition of the theorem. Note that if $n$ is odd, then
one of $a,b,c$ is necessarily even and so we are done. From
Lemma~\ref{3-leaves} we also know that $\min(a,b,c)=1$, say $c=1$ and
$\min(a,b)\leq 2$. But then $\min(a,b)=1$, because it must be even. Hence
$T_n=Y_{1,1,n-3}$ and $n$ is even.
\end{proof}

\begin{Rem} There is a tree $T_m$ for which $\hom(T_m,S_4)<\hom(T_m,P_4)$. 

\begin{figure}[h!]
\begin{center}
\scalebox{.65}{\includegraphics{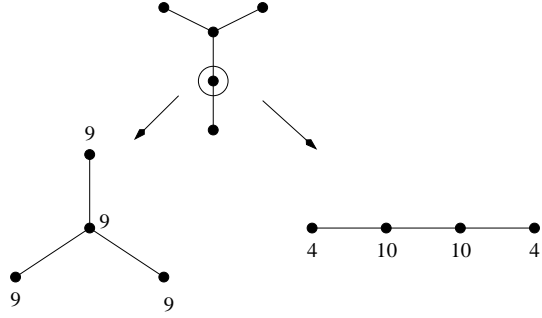}}\caption{An example.}     
\end{center}
\end{figure}

On the figure one can see a rooted tree and its homomorphism vectors to $S_4$
and $P_4$. Now if we attach $k$ copies of this rooted tree at the root then
for the obtained tree $T_m$ we have
$$\hom(T_m,P_4)=2\cdot 4^k+2\cdot 10^k>4\cdot 9^k=\hom(T_m,S_4)$$
for large enough $k$.
\medskip

On the other hand, it seems that $\hom(T_m,Y_{1,1,n-3})\geq \hom(T_m,P_n)$ if
$n\geq 6$ and even. 
\end{Rem}

\section{Homomorphisms into a path}
\label{homtopath}

In this section we study the extremal problem about the number of
homomorphisms from trees to a fixed path. The main purpose of this section is
to prove Theorem~\ref{into paths}. As the theorem suggests we have to
distinguish whether $n$ is even or odd.

\subsection{The KC-transformation: the case of even $n$}

In this section we will prove the part (i) of Theorem~\ref{into paths}.
 
 First we need some new definitions and lemmas.

\begin{Def} A vector ${\bf a}=(a_{1}, a_{2}, \dots, a_{n})$ is
\emph{symmetric} if $a_{i}=a_{n-i+1}$ for $1\leq i \leq n-1$, and
\emph{unimodal} if $a_{1}\leq a_{2}\leq \dots \leq a_{j}\geq a_{j+1}\geq
\dots \geq a_{n}$ for some $j$. Denote by $R_n$ the set of symmetric
positive integer vectors of dimension $n$. For any ${\bf a}, {\bf b}\in
 R_{n}$, define the \emph{dominance   order} on $R_{n}$ by 
 $$
 {\bf a} \preceq{\bf b}\Leftrightarrow \sum_{i=k}^{n+1-k}a_{i} \leq \sum_{i=k}^{n+1-k}b_{i}
\quad\text{for}\quad 1\leq k \leq \lceil\frac{n}{2}\rceil.
$$ 
It is clear that $R_{n}$ is a poset with respect to this order and ${\bf a}
\preceq {\bf b}$ implies $\lVert{\bf a}\rVert\leq \lVert{\bf b}\rVert$.  
Let $U_{n}$ be the set of all unimodal vectors in $R_n$. 
\end{Def} 

\begin{Lemma} \label{le: 2}
Let ${\bf c}\in U_n$ and ${\bf a}, {\bf b}\in R_{n}$ such that ${\bf a}\preceq
{\bf b}$.  Then ${\bf a}\ast{\bf c}\preceq {\bf b}\ast{\bf c}$. 
\end{Lemma}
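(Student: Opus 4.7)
The plan is to translate ${\bf a}\ast{\bf c}\preceq{\bf b}\ast{\bf c}$ directly into the family of inequalities
\[
T_k \;:=\; \sum_{i=k}^{n+1-k}(b_i-a_i)\,c_i \;\geq\; 0 \qquad \bigl(1\le k\le \tfrac{n+1}{2}\bigr),
\]
set $d_i:=b_i-a_i$, and attack via summation by parts using the sums $S_k:=\sum_{i=k}^{n+1-k}d_i$, which are nonnegative by the assumption ${\bf a}\preceq{\bf b}$.

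First I would exploit the symmetry of both ${\bf c}$ and ${\bf d}$ to fold each $T_k$ and each $S_k$ onto the left half of the index range. With $\mu:=\lceil n/2\rceil$, each symmetric pair $\{i,n+1-i\}$ with $i<\mu$ contributes $2d_ic_i$ to $T_k$ and $2d_i$ to $S_k$, leaving at most one unpaired middle term according to parity. In particular $2d_i=S_i-S_{i+1}$ for the paired indices, with a boundary convention handling the middle, which is exactly the ingredient needed for Abel summation.

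Carrying out the summation by parts, $T_k$ rearranges as
\[
T_k \;=\; S_k\, c_k \;+\; \sum_{i=k+1}^{\mu} S_i\,(c_i-c_{i-1}),
\]
(with, for odd $n$, the final difference read as $c_\mu - c_{\mu-1}$ coming from the middle term). The final step is to observe that symmetry of ${\bf c}$ together with unimodality forces the peak of ${\bf c}$ to sit at (or, for even $n$, to straddle) the middle index, so $c_i-c_{i-1}\ge 0$ throughout $k+1\le i\le \mu$. Combined with $S_i\ge 0$ and positivity of the entries, every summand above is nonnegative, yielding $T_k\ge 0$ for each $k$ and hence ${\bf a}\ast{\bf c}\preceq{\bf b}\ast{\bf c}$.

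I do not expect any genuine obstacle: once the right Abel-summation rearrangement is written down, each ingredient (nonnegativity of $S_k$, monotonicity of $c_i$ on the left half, positivity of the entries) is already guaranteed by the hypotheses. The only thing to be careful about is the parity split in the folding step and in the final term of the Abel sum, which is cosmetic bookkeeping rather than a real difficulty.
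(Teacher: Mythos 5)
Your proposal is correct and follows essentially the same route as the paper: the summation-by-parts identity you display, $T_k=S_kc_k+\sum_{i=k+1}^{\mu}S_i(c_i-c_{i-1})$, is exactly the paper's identity (which writes the boundary term as $c_{k-1}S_k$ with the convention $c_0=0$), and both arguments then finish by combining $S_i\ge 0$ with the observation that a symmetric unimodal vector is nondecreasing up to the middle index. The parity bookkeeping you mention is indeed only cosmetic, so there is nothing to add.
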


\begin{proof}
It is clear that both ${\bf a}\ast{\bf c}$ and ${\bf b}\ast{\bf c}$ are in
$R_{n}$. For $1\leq k\leq \lceil\frac{n}{2}\rceil$, by the symmetry  property
of ${\bf a,b}$ and ${\bf c}$, we have the identity  
$$ \sum_{i=k}^{n+1-k}(b_{i}-a_{i})c_i
=c_{k-1}\sum_{i=k}^{n+1-k}(b_{i}-a_{i})+\sum_{j=k}^{\lceil\frac{n}{2}\rceil}(c_j-c_{j-1})\sum_{i=j}^{n+1-j}(b_{i}-a_{i}) $$
where $c_0=0$.  From ${\bf c}\in U_n$ and ${\bf a}\preceq {\bf b}$, we see
that $c_j-c_{j-1}\geq0$ and $\sum_{i=j}^{n+1-j}(b_{i}-a_{i})\geq0$  for $1\leq
j\leq \lceil\frac{n}{2}\rceil$. Thus the right-hand side of the above identity
is nonnegative, which is equivalent to  ${\bf a}\ast{\bf c}\preceq {\bf
  b}\ast{\bf c}$. 
\end{proof}

It is clear that if $a \in U_{2n}$ then $aA^k\in U_{2n}$ for any positive
integer $k$, where $A$ is the adjacency matrix of $P_{2n}$. But for the path
with odd vertices, this is not true in general.

\begin{Lemma} \label{le: 6}
 Let $A$ be the adjacency matrix of $P_{n}$ and $l$ a positive integer. If
 ${\bf a}\in U_n$, then ${\bf a}A^{l}\preceq {\bf a}\ast ({\bf 1_{n}}A^{l})$. 
\end{Lemma}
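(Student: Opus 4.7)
The plan is to avoid inducting on $l$ (which fails because ${\bf a}A^{l}$ need not remain unimodal when $n$ is odd, as the paper remarks just before the lemma) and instead reduce the dominance inequality to a termwise positivity statement that combines the symmetry of $A^l$ with the symmetry/unimodality of ${\bf a}$.

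First, setting $I_k=\{k,k+1,\dots,n+1-k\}$, I would rewrite the difference of the two partial sums that define $\preceq$ as
\[
\sum_{i\in I_k}\Bigl(a_i({\bf 1_n}A^l)_i-({\bf a}A^l)_i\Bigr)=\sum_{i\in I_k}\sum_{j=1}^{n}(a_i-a_j)(A^l)_{ij},
\]
using $(A^l)_{ij}=(A^l)_{ji}$. I then split the inner sum according to whether $j\in I_k$ or $j\notin I_k$. The $(i,j)\in I_k\times I_k$ block is antisymmetric under swapping $i$ and $j$ (the factor $(A^l)_{ij}$ is symmetric, $a_i-a_j$ is antisymmetric, and the index set is symmetric), so this block contributes $0$. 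Everything therefore reduces to
\[
\sum_{i\in I_k,\ j\notin I_k}(a_i-a_j)(A^l)_{ij}\ \geq\ 0.
\]

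Since $(A^l)_{ij}\geq 0$, it suffices to check termwise that $a_i\geq a_j$ whenever $i\in I_k$ and $j\notin I_k$. Using $a_j=a_{n+1-j}$ I may assume $j\leq k-1$; then $j<k\leq\lceil n/2\rceil$ places $j$ in the nondecreasing portion of ${\bf a}$, so $a_j\leq a_{k-1}\leq a_k$. Similarly, for $i\in I_k$ with $i\leq\lceil n/2\rceil$, unimodality gives $a_i\geq a_k$, while for $i>\lceil n/2\rceil$ the symmetry $a_i=a_{n+1-i}$ with $n+1-i\in[k,\lceil n/2\rceil]$ again yields $a_i\geq a_k$. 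Chaining gives $a_i\geq a_k\geq a_j$, which closes the argument.

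The argument uses no induction and nothing beyond the defining symmetries of $P_n$ and $U_n$; the only step that is not completely mechanical is recognizing that the on-diagonal block of the double sum vanishes by antisymmetry, after which the rest is a comparison of indices. I therefore do not expect any genuine obstacle.
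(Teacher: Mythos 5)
Your proof is correct and is essentially the paper's own argument written out more explicitly: the paper also avoids induction on $l$, uses the symmetry of $A^{l}$ (row sums equal column sums) to see that $\sum_{i=k}^{n+1-k}({\bf a}A^{l})_i$ and $\sum_{i=k}^{n+1-k}({\bf a}\ast{\bf 1_n}A^{l})_i$ carry the same total coefficient mass on the $a_i$, and then invokes unimodality, which is precisely the termwise comparison $a_i\geq a_j$ for $i\in I_k$, $j\notin I_k$ that you make explicit after the antisymmetric cancellation of the $I_k\times I_k$ block. So this is the same approach, just with the paper's closing phrase ``follows from the unimodality of ${\bf a}$'' carried out in detail (including the small observation that symmetry plus unimodality forces ${\bf a}$ to be nondecreasing up to the middle).
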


\begin{proof} 
 Let ${\bf a}=(a_{1}, a_{2}, \dots, a_{n})$, ${\bf a}A^{l}=(x_{1}, x_{2},
 \dots, x_{n})$ and ${\bf a}\ast ({\bf 1_{n}}A^{l})=(y_{1}, y_{2}, \dots,
 y_{n})$. For $1\leq j \leq n$, the sum of coefficients of all $a_{i}$ ($1\leq
 i\leq n$) in $x_j$ is the sum of $j$-th column of $A^{l}$ and the coefficient
 of $a_{j}$ in $y_j$ also equals the sum of $j$-th column of $A^{l}$. It
 follows that the sum of coefficients of all $a_i$ ($1\leq i\leq n$) in
 $\sum_{i=k}^{n+1-k}x_{i}$ equals the sum of coefficients of all $a_i$ ($1\leq
 i\leq n$) in $\sum_{i=k}^{n+1-k}y_{i}$. But for every $i$, the coefficient of
 $a_i$ in $\lVert{\bf a}A^{l}\rVert$ is the sum of $i$-th row of $A^l$ and the
 coefficient of $a_i$ in $\lVert{\bf a}\ast {\bf 1_{n}}A^{l}\rVert$ is the sum
 of $i$-th column of $A^l$, which are equal. Thus $\sum_{i=k}^{n+1-k}x_{i}\leq
 \sum_{i=k}^{n+1-k}y_{i}$ follows from the unimodality of ${\bf a}$, which
 shows that ${\bf a}A^{l}\preceq {\bf a}\ast ({\bf 1_{n}}A^{l})$. 
\end{proof}

Before we prove part (i) of Theorem~\ref{into paths}, we show that we only
have to prove the statement for the case of even $n$ since the following lemma
implies it for the case of odd $n$ if $\diam(T)\leq n-1$.

\begin{Lemma} \label{average}
If $\diam(T)\leq n-1$ then 
\begin{equation}\label{pep0}
\hom(T,P_n)=\frac{1}{2}(\hom(T,P_{n-1})+\hom(T,P_{n+1})).
\end{equation}
\end{Lemma}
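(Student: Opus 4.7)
The plan is to prove the identity by inclusion--exclusion on the homomorphisms from $T$ into the larger path $P_{n+1}$, according to which of the two endpoints of $P_{n+1}$ lies in the image. The intuition is that the diameter bound on $T$ forces the image of any such homomorphism to be a ``short'' subinterval of $P_{n+1}$, hence it cannot touch both endpoints simultaneously.

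Label the vertices of $P_{n+1}$ consecutively by $1,2,\dots,n+1$. First I would observe that for any $f\in\Hom(T,P_{n+1})$, the image $f(V(T))$ induces a connected subgraph of $P_{n+1}$ (since $T$ is connected and $f$ preserves edges), and hence equals a set of consecutive integers $\{i,i+1,\dots,j\}$. Moreover, a homomorphism cannot increase distances, so
$$
\diam_{P_{n+1}}(f(V(T)))\leq \diam(T)\leq n-1,
$$
which forces $j-i\leq n-1$, i.e.\ $|f(V(T))|\leq n$. Thus $f(V(T))$ strictly misses at least one of the two endpoints $1$ and $n+1$ of $P_{n+1}$.

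Next, let
$$
A_1=\{f\in\Hom(T,P_{n+1})\ :\ 1\notin f(V(T))\},\qquad A_2=\{f\in\Hom(T,P_{n+1})\ :\ n+1\notin f(V(T))\}.
$$
By the previous paragraph, $\Hom(T,P_{n+1})=A_1\cup A_2$. The subgraphs of $P_{n+1}$ induced on $\{2,\dots,n+1\}$, on $\{1,\dots,n\}$, and on $\{2,\dots,n\}$ are isomorphic to $P_n$, $P_n$, and $P_{n-1}$ respectively, so
$$
|A_1|=|A_2|=\hom(T,P_n),\qquad |A_1\cap A_2|=\hom(T,P_{n-1}).
$$
Inclusion--exclusion then yields
$$
\hom(T,P_{n+1})=2\hom(T,P_n)-\hom(T,P_{n-1}),
$$
which rearranges to \eqref{pep0}.

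There is no real obstacle here: the only non-formal step is checking that the image of a connected graph under a path-valued homomorphism is an interval, and that the diameter bound on $T$ forces this interval to omit an endpoint of $P_{n+1}$. Everything else is bookkeeping.
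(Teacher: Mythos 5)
Your proof is correct and is essentially the paper's argument: both rest on the single observation that $\diam(T)\leq n-1$ prevents a homomorphism into $P_{n+1}$ from meeting both endpoints of the path. You package the count as inclusion--exclusion inside $\Hom(T,P_{n+1})$, while the paper decomposes $\Hom(T,P_n)$ and $\Hom(T,P_{n+1})$ according to whether the last vertex is in the image and matches those two classes bijectively, but the two bookkeepings are equivalent.
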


\begin{proof}
Let the vertices of $P_n$ be labeled consecutively by $1,2,\ldots,n$. We can
decompose the set $\Hom(T,P_n)$ to the following two sets. The first set
consists of those homomorphisms which do not contain the vertex $n$ in their
image and the second set consists of those homomorphisms which contain the
vertex $n$ in their image. Clearly, the cardinality of the first set is
$\hom(T,P_{n-1})$. The cardinality of the second set will be denoted by
$\hom(T,P_n,n\in f(T))$. So  
\begin{equation}\label{pep1}
\hom(T,P_n)=\hom(T,P_{n-1})+\hom(T,P_n,n\in f(T)).
\end{equation}
We can repeat this argument with the path $P_{n+1}$ as well:
\begin{equation*}
\hom(T,P_{n+1})=\hom(T,P_n)+\hom(T,P_{n+1},n+1\in f(T)).
\end{equation*}
Rearranging this we obtain that 
\begin{equation}\label{pep3}
\hom(T,P_{n})=\hom(T,P_{n+1})-\hom(T,P_{n+1},n+1\in f(T)).
\end{equation}
The crucial observation is that 
$$
\hom(T,P_n,n\in f(T))=\hom(T,P_{n+1},n+1\in f(T)).
$$
Indeed, if $n+1\in f(T)$ then $1\notin f(T)$ because
$\diam(T)<\diam(P_{n+1})$, so all these homomorphisms go to the path
$\{2,3,\ldots,n+1\}$ and therefore there is a natural correspondence between
the two sets. Hence if $\diam(T)\leq\diam(P_n)$ then by adding together
Eq.~\eqref{pep1} and Eq.~\eqref{pep3}  we obtain~Eq.~\eqref{pep0}. 
\end{proof}

\begin{proof}[Proof of part (i) of Theorem~\ref{into paths}] Let $A$ be
  the adjacency matrix of $P_n$. Let $T'_m=KC(T,x,y)$ be the KC-transformation
  of  the tree $T_m$ with respect to a $x$--$y$ path $P$. Let $B_1$ and $B_2$ be
  the components of $y$ and $x$ in the subgraph of $T_m$ by deleting all the
  edges of $P$ respectively.  

The case $n$ is even. It is easy to see that an element of $U_n$ multiplied by
$A$ is still in $U_n$ and the Hadamard product of two elements in $U_n$ is
also in $U_n$. By the tree-walk algorithm, any hom-vector from a tree to $P_n$
is in $U_n$. Again by the tree-walk algorithm, we have  
$$
{\bf h}(T_m,x,P_n)=({\bf h}(B_1,y,P_n)A^k)\ast{\bf h}(B_2,x,P_n)
$$
and 
$${\bf h}(T'_m,x,P_n)={\bf h}(B_1,y,P_n)\ast {\bf 1_{n}}A^k\ast{\bf
  h}(B_2,x,P_n),$$ 
where $k$ is the length of the path $P$.
By lemma~\ref{le: 6}, we have 
$$
{\bf h}(B_1,y,P_n)A^k\preceq {\bf h}(B_1,y,P_n)\ast {\bf 1_{n}}A^k.
$$
It follows from lemma~\ref{le: 2} that ${\bf h}(T,x,P_n) \preceq {\bf
  h}(T',x,P_n)$, which implies $\hom(T,P_n)\leq \hom(T',P_n)$.  
 
For the case of odd $n$ we have already seen that Lemma~\ref{average} implies
the statement.
\end{proof}

The KC-transformation does not always increase the number of
  homomorphisms to the path $P_n$ when $n$ is odd. For example, in
  Fig.~\ref{counter2}, we have  
$\hom(T_6,P_3)=20>16=\hom(T'_6,P_3)$.

\begin{figure}[h!]
\begin{center}
\scalebox{.65}{\includegraphics{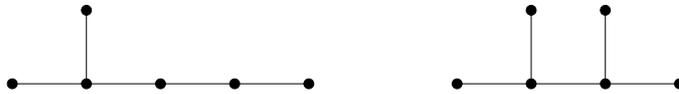}}   
\end{center}
\caption{The trees $T_6$ (left) and $T'_6$ (right).  \label{counter2}}
\end{figure}

\subsection{More tree transformations: the case of odd $n$}
Clearly, if $n$ is even or $n$ is odd and $\diam(T_m)\leq n-1$, then part (i)
of Theorem~\ref{into paths} implies the part (ii). In this subsection, we will
develop some more transformations on  trees to prove part (ii) of
Theorem~\ref{into paths} for odd $n$.   

\textbf{From now on in this subsection, $n$ is  odd.} 

For any $u\in V(T)$, denote by $T(u)$ the rooted tree with a root at $u$. As
usual, we will denote by  ${\bf h}(T,u,G)$ the hom-vector of the rooted tree
$T(u)$ into $G$. One can easily check that the hom-vector of a rooted 
tree into $P_n$, i.e., ${\bf h}(T,u,P_n)$ is not unimodal anymore. On the
other hand, the situation is not as bad as one may think after the first sight.

\begin{Def} We say that $(a_1,a_2,\dots ,a_n)$ is \emph{symmetric bi-unimodal}
if the sequence itself is symmetric and the two subsequences
$(a_1,a_3,a_5,\dots a_n)$ and $(a_2,a_4,\dots a_{n-1})$ are unimodal. 
\end{Def}

\begin{Pro} Let $T(u)$ be a rooted tree and $(a_1,\dots ,a_n)$ be the
hom-vector of $T(u)$ into $P_n$. Then $(a_1,\dots ,a_n)$ is symmetric
bi-unimodal. 
\end{Pro}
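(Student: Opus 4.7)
The plan is to prove the proposition by induction on $|V(T)|$ using the two recursion steps of the tree-walk algorithm. The base case of a single vertex is immediate, since ${\bf h}(T,u,P_n)=(1,1,\dots,1)$ is trivially symmetric bi-unimodal. If $u$ is a non-leaf, I apply Recursion~1 and write ${\bf h}(T,u,P_n)={\bf h}(T_1,u,P_n)\ast{\bf h}(T_2,u,P_n)$ for a decomposition $T=T_1\cup T_2$ sharing only $u$. The odd-indexed subsequence of a Hadamard product is the Hadamard product of the odd-indexed subsequences (same for the even part), so it is enough to know that the Hadamard product of two positive symmetric unimodal sequences of the same length is symmetric unimodal. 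This is clear: both factors are nondecreasing up to the common center and nonincreasing afterward, and the same holds for their entrywise product.

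The interesting case is Recursion~2. If $u$ is a leaf with unique neighbor $u'$, then ${\bf h}(T,u,P_n)={\bf h}(T-u,u',P_n)A$, where $A=A_{P_n}$. Writing $a={\bf h}(T-u,u',P_n)$ and $b=aA$, we have $b_i=a_{i-1}+a_{i+1}$ under the convention $a_0=a_{n+1}=0$. Symmetry of $b$ follows from symmetry of $a$, and the bipartite structure of $P_n$ gives the key observation that the odd-indexed entries of $b$ are determined only by the even-indexed entries of $a$ via $b_{2k+1}=a_{2k}+a_{2k+2}$, and symmetrically the even-indexed entries of $b$ via $b_{2k}=a_{2k-1}+a_{2k+1}$. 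Thus both subsequences of interest in $b$ have the form of consecutive sums of a symmetric unimodal nonnegative sequence (for the odd part, apply to the zero-padded sequence $(0,a_2,a_4,\dots,a_{n-1},0)$; for the even part, apply to $(a_1,a_3,\dots,a_n)$).

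Hence the whole argument reduces to the following lemma: if $(d_1,\dots,d_M)$ is a nonnegative symmetric unimodal sequence, then $(d_1+d_2,\,d_2+d_3,\,\dots,\,d_{M-1}+d_M)$ is symmetric unimodal. Symmetry is immediate. For unimodality one computes the consecutive differences $e_k=(d_{k+1}+d_{k+2})-(d_k+d_{k+1})=d_{k+2}-d_k$; if $d$ has its peak at index $p$, then $e_k\geq 0$ for $k+2\leq p$ and $e_k\leq 0$ for $k\geq p$, and in the boundary case $k=p-1$ the inequality $e_{k+1}=d_{k+3}-d_{k+1}\leq 0$ is forced by monotonicity on the decreasing side, ruling out a second sign change. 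Thus $(e_k)$ changes sign at most once and the consecutive-sum sequence is unimodal. The main potential obstacle is exactly this lemma, but as sketched the sign-change analysis is routine; the real content of the proposition is that the bipartite structure of $P_n$ respects the odd/even splitting of the hom-vector, so that the parity-interlaced version of unimodality is the natural invariant preserved by both recursion steps.
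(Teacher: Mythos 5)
Your proof is correct and takes essentially the same route as the paper: an induction along the two recursion steps of the tree-walk algorithm, checking that the Hadamard product and multiplication by the adjacency matrix of $P_n$ (i.e.\ passing to $(a_2,a_1+a_3,\dots,a_{n-2}+a_n,a_{n-1})$) preserve symmetric bi-unimodality. The paper states this preservation as a one-line observation, whereas you verify it explicitly via the parity splitting and the consecutive-sum lemma; the content is the same.
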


\begin{proof} This statement follows from the tree-walk algorithm and the
observation that if $(a_1,a_2,\dots ,a_n)$ and $(b_1,b_2,\dots ,b_n)$ are
symmetric bi-unimodal, then  the sequences  $(a_1b_1,a_2b_2,\dots ,a_nb_n)$
and $(a_2,a_1+a_3,\dots a_{n-2}+a_n,a_{n-1})$ are also symmetric bi-unimodal.
\end{proof} 

The following one is a very surprising theorem. In fact, it is not true for
even $n$.

\begin{Lemma}[Correlation inequality] Let $T_1(u)$ be a rooted tree and $T_2(u)$
be its rooted subtree, i.e., $T_2$ is subtree of $T_1$ and their roots are
the same. Let $(a_1,\dots ,a_n)$ be the hom-vector of $T_1(u)$ into $P_n$
and let $(b_1,\dots ,b_n)$ be the hom-vector of $T_2(u)$ into $P_n$.
Assume that $i\equiv j\ \ (mod \ 2)$ and $|\frac{n+1}{2}-i|\leq
 |\frac{n+1}{2}-j|$, so $i$ is closer to the center than $j$. Then
$$a_ib_j\geq a_jb_i.$$
\end{Lemma}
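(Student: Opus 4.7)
The plan is to prove the correlation inequality by induction on $|V(T_1)|$, treating all pairs $(T_1,T_2)$ of rooted trees with $T_2\subseteq T_1$ and common root $u$ uniformly. The base case $|V(T_1)|=1$ forces $T_2=T_1$, so both hom-vectors equal $(1,\ldots,1)$ and the inequality is trivial. For the inductive step I distinguish two cases according to the degree of $u$ in $T_1$.

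\emph{Hadamard case.} If $u$ has degree $\geq 2$ in $T_1$, split $T_1=T_1^{(1)}\cup T_1^{(2)}$ at $u$ non-trivially and set $T_2^{(k)}:=T_2\cap T_1^{(k)}$. By Recursion~1, $a_i=a_i^{(1)}a_i^{(2)}$ and $b_i=b_i^{(1)}b_i^{(2)}$. Applying the inductive hypothesis to each strictly smaller pair $(T_1^{(k)},T_2^{(k)})$ gives $a_i^{(k)}b_j^{(k)}\geq a_j^{(k)}b_i^{(k)}$, and multiplying the two positive inequalities yields $a_ib_j\geq a_jb_i$.

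\emph{Leaf-root case.} If $u$ has degree $1$ in $T_1$ with unique neighbor $u'$, then when $T_2=\{u\}$ we have $b_i\equiv 1$ and the claim reduces to $a_i\geq a_j$, which is the same-parity unimodality of the hom-vector, i.e.\ the preceding Proposition. Otherwise $u$ also has degree $1$ in $T_2$ with the same neighbor $u'$; setting $T_1^*:=T_1-u$, $T_2^*:=T_2-u$ rooted at $u'$, Recursion~2 gives $a_i=a^*_{i-1}+a^*_{i+1}$ and $b_i=b^*_{i-1}+b^*_{i+1}$ (with the convention $a^*_0=a^*_{n+1}=0$). Expanding and regrouping,
\begin{align*}
a_ib_j-a_jb_i
&= \bigl(a^*_{i-1}b^*_{j-1}-a^*_{j-1}b^*_{i-1}\bigr)+\bigl(a^*_{i+1}b^*_{j+1}-a^*_{j+1}b^*_{i+1}\bigr)\\
&\quad +\bigl(a^*_{i-1}b^*_{j+1}-a^*_{j+1}b^*_{i-1}\bigr)+\bigl(a^*_{i+1}b^*_{j-1}-a^*_{j-1}b^*_{i+1}\bigr),
\end{align*}
a sum of four brackets of shape $a^*_pb^*_q-a^*_qb^*_p$, with $p$ and $q$ of common parity (opposite to that of $i,j$); by the inductive hypothesis on $(T_1^*,T_2^*)$, each such bracket is nonnegative whenever $|c-p|\leq|c-q|$, where $c=(n+1)/2$.

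The plan's main technical step is to verify this comparison for all four brackets. Write $d_i=c-i$, $d_j=c-j$ so that $|d_i|\leq|d_j|$ and $d_i\equiv d_j\pmod 2$. When $|d_i|<|d_j|$ strictly, the parity constraint forces $|d_j|-|d_i|\geq 2$, and a short case analysis on the signs of $d_i$ and $d_j$ confirms $|c-p|\leq|c-q|$ in every bracket, so the sum is nonnegative. The delicate case is $|d_i|=|d_j|$ with $i\neq j$, i.e.\ $j=n+1-i$. Here the naive bracket-by-bracket application of the inductive hypothesis breaks down, and instead I invoke the reflection symmetry $a^*_k=a^*_{n+1-k}$ (inherited from the reflection automorphism of $P_n$), which gives $a^*_{i-1}=a^*_{j+1}$, $a^*_{i+1}=a^*_{j-1}$ and similarly for $b^*$: the third and fourth brackets vanish outright, while the first and second are exact negatives of each other, so the total sum is zero (in agreement with $a_i=a_j$, $b_i=b_j$ by the same symmetry). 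The main obstacle is precisely this bookkeeping in the equal-distance case, where the symmetry-induced cancellation—rather than the inductive hypothesis applied bracket-by-bracket—is what produces the required inequality.
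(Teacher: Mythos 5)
Your proof is correct and takes essentially the same route as the paper's: induction on $|V(T_1)|$, multiplying the branch inequalities in the non-leaf-root case, and in the leaf-root case expanding $a_ib_j-a_jb_i$ via Recursion~2 into the four brackets $a^*_{i\pm1}b^*_{j\pm1}-a^*_{j\pm1}b^*_{i\pm1}$, each handled by the inductive hypothesis. The only cosmetic differences are that you treat $T_2=\{u\}$ explicitly via the bi-unimodality proposition and settle the equal-distance case $j=n+1-i$ by a cancellation argument, whereas the paper reduces by the symmetry $a_k=a_{n+1-k}$ to $j<i\leq\frac{n+1}{2}$, which makes that case trivial (indeed $a_i=a_j$ and $b_i=b_j$ there).
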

 
\begin{proof} We need to prove that $\frac{a_i}{a_j}\geq \frac{b_i}{b_j}$.
(From this form it is clear that it is a natural associative ordering on
rooted trees.)  We prove this claim by induction on the number of vertices
of $T_1(u)$. If $|T_1(u)|\leq 2$ then it is trivial to check the statement.

If $u$ is not a leaf in $T_1$ then by induction this inequality holds for the
branches at $u$ and then it is true for the Hadamard products. If $u$ is a
leaf in $T_1$ then so in $T_2$. Let $v$ be the unique neighbor of $u$ and let us
consider the rooted trees $(T_1-u)(v)$ and $(T_2-u)(v)$. Let $(a'_1,a'_2,\dots
,a'_n)$ and $(b'_1,\dots ,b'_n)$ be their hom-vectors. Then,
$$a_i=a'_{i-1}+a'_{i+1}\ \ \text{and}\ \ b_i=b'_{i-1}+b'_{i+1},$$
where $a'_0=a'_{n+1}=b'_0=b'_{n+1}=0$. Note that the numbers $i-1,i+1,j-1,j+1$
are still congruent modulo $2$.
Because of the symmetry we can assume
that $j<i\leq \frac{n+1}{2}$. If in addition $i<\frac{n+1}{2}$ then we have 
$j-1<j+1\leq i-1<i+1 \leq \frac{n+1}{2}$ and we can apply the induction
hypothesis: 
$$a'_{i\pm 1}b'_{j\pm 1}\geq a'_{j\pm 1}b'_{i\pm 1}$$
in all four cases, thus
$$a_ib_j=(a'_{i-1}+a'_{i+1})(b'_{j-1}+b'_{j+1})\geq
(a'_{j-1}+a'_{j+1})(b'_{i-1}+b'_{i+1})=a_jb_i.$$
If $i=\frac{n+1}{2}$ then the above four inequalities are still true, because
$\{i-1,i+1\}$ are still closer to $\frac{n+1}{2}$ than the numbers
$\{j-1,j+1\}$.  
Hence we have proved the statement. 
\end{proof}

\begin{Lemma}[Log-concavity of the hom-vector.] Let $T_1(u)$ be a rooted tree
  and let $(a_1,\dots ,a_n)$ be its hom-vector. Assume that $i<j$ and
  $i\neq j\ (\bmod \ 2)$.
Then $a_ia_j\leq a_{i+1}a_{j-1}$.
\end{Lemma}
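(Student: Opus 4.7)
The plan is to prove the inequality by induction on the number of vertices of the rooted tree $T_1(u)$, following the two recursion steps of the tree-walk algorithm of Section~\ref{treewalk}. The base case $|V(T_1)|=1$ is trivial, since then $a_i=1$ for all $i$.

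For the inductive step, suppose first that $u$ is not a leaf. Recursion~1 decomposes $T_1(u) = T^A(u) \cup T^B(u)$ into two smaller rooted trees sharing only $u$, with hom-vectors $(a_i^A)$ and $(a_i^B)$ satisfying $a_i = a_i^A a_i^B$. By the inductive hypothesis, $a_i^A a_j^A \le a_{i+1}^A a_{j-1}^A$ and $a_i^B a_j^B \le a_{i+1}^B a_{j-1}^B$; multiplying these two nonnegative inequalities gives at once $a_i a_j \le a_{i+1} a_{j-1}$.

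Now suppose $u$ is a leaf with unique neighbor $v$. By Recursion~2, $a_i = a'_{i-1} + a'_{i+1}$, where $(a'_i)$ is the hom-vector of $(T_1-u)(v)$, extended by the convention $a'_0=a'_{n+1}=0$. Expanding,
$$
a_{i+1}a_{j-1} - a_i a_j = (a'_i + a'_{i+2})(a'_{j-2} + a'_j) - (a'_{i-1} + a'_{i+1})(a'_{j-1} + a'_{j+1}).
$$
Since $i \not\equiv j \pmod 2$ and $i < j$, either $j = i+1$ or $j \ge i+3$. In the former case the two products on the right coincide by commutativity, so equality holds. In the latter case, the four cross-pairs $(i-1, j-1)$, $(i-1, j+1)$, $(i+1, j-1)$, $(i+1, j+1)$ each consist of strictly increasing indices of opposite parities; applying the inductive hypothesis of the lemma to the smaller tree $(T_1-u)(v)$ gives
\begin{align*}
a'_{i-1}a'_{j-1} &\le a'_i a'_{j-2}, & a'_{i-1}a'_{j+1} &\le a'_i a'_j,\\
a'_{i+1}a'_{j-1} &\le a'_{i+2}a'_{j-2}, & a'_{i+1}a'_{j+1} &\le a'_{i+2}a'_j,
\end{align*}
whose sum is exactly the desired inequality. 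Any boundary indices outside $[1,n]$ disappear by the convention $a'_0 = a'_{n+1} = 0$.

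The only delicate point in this plan is the degenerate case $j=i+1$, where the natural four-term matching would require applying log-concavity to the nonsensical pair $(i+1,j-1) = (i+1,i)$. Commutativity of multiplication saves the argument by making both sides of the difference literally identical, so no induction is needed there. Everything else is a routine check that the parity hypothesis produces precisely the four opposite-parity cross-pairs on which the inductive hypothesis can be invoked.
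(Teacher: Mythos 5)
Your proof is correct and is essentially the proof the paper has in mind: the paper omits it, remarking that it is "almost identical to the proof of the Correlation inequality," which is exactly your induction through the two tree-walk recursions (Hadamard product at a non-leaf root, the expansion $a_i=a'_{i-1}+a'_{i+1}$ at a leaf root, then four applications of the inductive hypothesis to the cross terms, with the boundary convention $a'_0=a'_{n+1}=0$). Your explicit treatment of the degenerate case $j=i+1$ is a sound and welcome detail.
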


\begin{proof} The proof is almost identical to the proof of the Correlation
 inequality and thus is omitted.
\end{proof}

\begin{Def} Let $T_1,T_2$ be trees. Let $u$ be an arbitrary vertex of $T_1$ and
let $A$ and $B$ be the color classes of $T_2$ considered as a bipartite graph. 
Let $h_A$ and $h_B$ be the number of homomorphisms from $T_1$ to $T_2$ where $u$
goes to $A$ and $B$, respectively. Then let
$$g(T_1,T_2):=h_Ah_B.$$
Note that $g(T_1,T_2)$ is independent of the vertex $u$. 
\medskip 

Let the vertices of $P_n$ be labeled consecutively by $1,2,\ldots,n$. If
$T_2=P_n$ then $\hom_0(T_1(u),P_n)$ denotes the number of homomorphisms of 
$T_1$ into $P_n$, where the image of $u$ is a vertex of even index and
$\hom_1(T_1(u),P_n)$ denotes the number of homomorphisms of $T_1$ into $P_n$,
where the image of $u$ is a vertex of odd index. Thus in this case
$$g(T_1,P_n)=\hom_0(T_1(u),P_n)\cdot \hom_1(T_1(u),P_n).$$
\end{Def}

First we prove the following curious theorem on the function $g(T,P_n)$.

\begin{Th} \label{g-path} Let $T_m$ be a tree on $m$ vertices. Then
$$g(T_m,P_n)\geq g(P_m,P_n).$$
\end{Th}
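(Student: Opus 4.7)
The plan is to show that $g(\cdot, P_n)$ is monotone nondecreasing under the KC-transformation and then invoke Proposition~\ref{pr1}, which places $P_m$ at the bottom of the KC-poset on trees with $m$ vertices. Specifically, for $T'_m = KC(T_m, x, y)$ I aim to prove $g(T'_m, P_n)\geq g(T_m, P_n)$; iterating up from $P_m$ then yields the theorem.

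First I would apply the tree-walk algorithm to write
\[
{\bf h}(T_m, x, P_n) = ({\bf a}\,A^k)\ast{\bf b} \quad\text{and}\quad {\bf h}(T'_m, x, P_n) = {\bf a}\ast({\bf 1_n}A^k)\ast{\bf b},
\]
where ${\bf a} = {\bf h}(B_1, y, P_n)$ and ${\bf b} = {\bf h}(B_2, x, P_n)$ are the hom-vectors of the two ``side'' subtrees of the KC-path of length $k$, in the notation of Section~\ref{treewalks}. Since $g(\cdot, P_n)=\hom_0\cdot\hom_1$ is the product of the even-index sum and the odd-index sum of the hom-vector, the comparison splits by the parity of $k$.

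When $k$ is even, $A^k$ preserves the parity of indices, so the hom-vector decouples on the two parity classes of $P_n$ (each restriction being symmetric unimodal in its own right). The dominance-order arguments developed in Section~\ref{homtopath}, applied on each parity class separately in the manner of Lemmas~\ref{le: 2} and~\ref{le: 6}, then give $\hom_0(T'_m, P_n) \geq \hom_0(T_m, P_n)$ and $\hom_1(T'_m, P_n) \geq \hom_1(T_m, P_n)$, hence $g(T'_m, P_n) \geq g(T_m, P_n)$.

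The hard part is the odd-$k$ case, where $A^k$ swaps parities and the clean decoupling by parity classes fails: the even-sum of ${\bf h}(T'_m, x, P_n)$ receives contributions from the odd part of ${\bf a}$ via ${\bf 1_n}A^k$, while the even-sum of ${\bf h}(T_m, x, P_n)$ receives contributions from the odd part of ${\bf a}$ via ${\bf a}\,A^k$. Here I would use the Correlation Inequality just above (controlling ratios $a_i/a_j$ for same-parity indices of a rooted subtree of $T_m$) together with the Log-concavity of the hom-vector (controlling $a_ia_j$ versus $a_{i+1}a_{j-1}$ for opposite-parity $i<j$). These two ingredients encode that ${\bf a}$ is concentrated near the center of $P_n$, so the Hadamard product ${\bf a}\ast({\bf 1_n}A^k)$ is ``more peaked'' near the center than the matrix product ${\bf a}\,A^k$; multiplying by ${\bf b}$ and summing over the two parity classes, a careful quadratic-form estimate then yields the product inequality $\hom_0(T'_m, P_n)\hom_1(T'_m, P_n) \geq \hom_0(T_m, P_n)\hom_1(T_m, P_n)$, possibly after separate bookkeeping for small values of $k$ in the style of Lemma~\ref{path-odd k}.
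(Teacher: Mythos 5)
Your plan hinges on the claim that \emph{every} KC-transformation (so that you can climb the poset of Proposition~\ref{pr1} from $P_m$ up to $T_m$) does not decrease $g(\cdot,P_n)$, and the odd-$k$ case of that claim is exactly what you have not proved. For even $k$ your sketch is plausible: the even-KC move is a special case of the LS-switch, and the paper's Theorem~\ref{LS-Th} indeed shows $\hom_0$ and $\hom_1$ both increase (the paper does this via the correlation inequality rather than parity-restricted versions of Lemmas~\ref{le: 2} and~\ref{le: 6}, which you would still have to state and prove, but that part looks fixable). The odd-$k$ case, however, is not a detail: chains in the KC-poset from $P_m$ to a general tree unavoidably contain KC-moves along odd paths, these moves change the color-class sizes of $T_m$, and the paper nowhere asserts, let alone proves, that they increase $g(\cdot,P_n)$ for odd $n$. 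Your paragraph about it is a hope, not an argument: the correlation inequality compares a rooted tree with its rooted \emph{subtree}, and the log-concavity lemma concerns a single hom-vector; neither obviously produces the product inequality $\hom_0(T')\hom_1(T')\geq \hom_0(T)\hom_1(T)$ when the transformation flips parities, and the decisive ``careful quadratic-form estimate'' is never exhibited. Recall also that for odd $n$ the KC-transformation can strictly \emph{decrease} $\hom(\cdot,P_n)$ (Fig.~\ref{counter2}), which is a warning that the odd case is genuinely delicate.

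The paper takes a different route precisely to avoid this: it considers a tree minimizing $g(\cdot,P_n)$ with, among the minimizers, maximal Wiener index, and shows that if this tree is not $P_m$ then it is the image of another tree under either an LS-switch (Theorem~\ref{LS-Th}, proved with the correlation inequality) or a short-path shift (proved by an explicit hom-vector computation plus the log-concavity lemma); both moves do not decrease $g$ and strictly decrease the Wiener index, giving a contradiction. The short-path shift is the key substitute for odd KC-moves, because it is a very restricted operation (reattaching a $P_3$) whose effect on the two parity sums can be computed exactly, whereas a general odd KC-move cannot be controlled this way. As it stands, your proposal has a genuine gap at its central step; either prove the odd-KC monotonicity of $g(\cdot,P_n)$ (which would be a new result, not available in the paper), or replace the KC-poset strategy by transformations whose effect on both $\hom_0$ and $\hom_1$ you can actually control, as the paper does.
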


We will prove Theorem~\ref{g-path} by using two transformations: the LS-switch
and the so-called short-path shift.  

Recall that the LS-switch is a generalization of the
even-KC-transformation. For the sake of convenience we repeat its definition
below. 

Let $R(u,v)$ be a tree with specified vertices $u$ and
$v$ such that the distance of $u$ and $v$ is even and $R$ has an
automorphism of order $2$ which exchanges the vertices $u$ and $v$. Let
$T_1(x),T_2(x),T_3(y),T_4(y)$ are rooted trees such that $T_2(x)$ is the
rooted subtree of $T_1(x)$ and $T_4(y)$ is the rooted subtree of $T_3(y)$. 
Let the tree $T$ be obtained from the trees
$R(u,v),T_1(x),T_2(x),T_3(y),T_4(y)$  by attaching a copy of $T_1(x),T_4(y)$ to
$R(u,v)$ at vertex $u$ and a copy of $T_2(x),T_3(y)$ at vertex $v$.  Assume
that the tree $T'$ is obtained from the trees
$R(u,v),T_1(x),T_2(x),T_3(y),T_4(y)$ by attaching a copy of $T_1(x),T_3(y)$ to
$R(u,v)$ at vertex $u$ and a copy of $T_2(x),T_4(y)$ at vertex $v$. Then $T'$
is the LS-switch of $T$.   Observe that there is a natural bijection between
the color classes of $T'$ and $T$.

This transformation seems to be quite general, but still the following theorem
is true.

\begin{Th} \label{LS-Th} Let $T$ be a tree and $T'$ be the LS-switch of
$T$. Then 
$$\hom_k(T'(u),P_n)\geq \hom_k(T(u),P_n)$$
for $k=0,1$. In particular,
$$\hom(T',P_n)\geq \hom(T,P_n)
\quad\text{and} \quad
g(T',P_n)\geq g(T,P_n).$$
\end{Th}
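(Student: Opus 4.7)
The plan is to exploit the asymmetry between $T$ and $T'$ directly via the order-two automorphism $\sigma$ of $R(u,v)$, rather than pushing parity-aware invariants through the tree-walk algorithm as in the earlier LS-switch theorem. First I would decompose any homomorphism $f:T\to P_n$ by recording its restriction $g=f|_R\in\Hom(R,P_n)$ and then extending independently to each attached branch. Writing $a_i,b_i,c_i,d_i$ for the $i$-th entries of ${\bf h}(T_1,x,P_n)$, ${\bf h}(T_2,x,P_n)$, ${\bf h}(T_3,y,P_n)$, ${\bf h}(T_4,y,P_n)$ respectively, this yields
\begin{align*}
\hom(T,P_n)&=\sum_{g\in\Hom(R,P_n)}a_{g(u)}d_{g(u)}b_{g(v)}c_{g(v)},\\
\hom(T',P_n)&=\sum_{g\in\Hom(R,P_n)}a_{g(u)}c_{g(u)}b_{g(v)}d_{g(v)},
\end{align*}
and analogous formulas hold if the sum is restricted to $g$ with $g(u)$ of a fixed parity, giving $\hom_k$.

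Next, I would use $\sigma$ to pair each $g$ with $g\circ\sigma$, partitioning $\Hom(R,P_n)$ into orbits of size $1$ or $2$. For a fixed point of $\sigma$ we have $g(u)=g(v)$, and the $T$- and $T'$-contributions are visibly equal. For a genuine $2$-orbit with $(g(u),g(v))=(i,j)$, a short algebraic manipulation factors the joint contribution to $\hom(T',P_n)-\hom(T,P_n)$ as
\[
(a_ib_j-a_jb_i)(c_id_j-c_jd_i).
\]

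The crucial step is to show this product is non-negative. Since $d(u,v)$ is even and $g$ is a homomorphism into the bipartite graph $P_n$, we have $i\equiv j\pmod 2$. The Correlation Inequality, applied to the rooted-subtree pairs $T_2(x)\subseteq T_1(x)$ and $T_4(y)\subseteq T_3(y)$, then tells us that $a_ib_j-a_jb_i$ and $c_id_j-c_jd_i$ share the same sign, governed by which of $i,j$ is closer to the centre $(n+1)/2$; in the boundary case $i+j=n+1$ both factors vanish by the symmetry of rooted hom-vectors into $P_n$ supplied by the symmetric bi-unimodality proposition. Summing over orbits gives $\hom(T',P_n)\ge\hom(T,P_n)$.

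Finally, the same pairing restricts cleanly to each parity class, because $u$ and $v=\sigma(u)$ lie in the same colour class of $R$, so $g$ and $g\circ\sigma$ send $u$ to vertices of the same parity. This yields $\hom_k(T'(u),P_n)\ge\hom_k(T(u),P_n)$ for $k=0,1$, and the inequality for $g(T,P_n)$ follows by taking the product. The main point requiring care is the sign hypothesis of the Correlation Inequality: the parity matching $i\equiv j\pmod 2$ and the centre-distance comparison must be tracked simultaneously, and the boundary case $i+j=n+1$ genuinely needs the explicit symmetry of rooted hom-vectors into $P_n$.
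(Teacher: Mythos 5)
Your proposal is correct and takes essentially the same route as the paper: your orbit pairing of $g$ with $g\circ\sigma$ is exactly the paper's use of the symmetry $a_{ij}=a_{ji}$ of the counts of homomorphisms of $R(u,v)$, the per-pair factorization of the difference as $(a_ib_j-a_jb_i)(c_id_j-c_jd_i)$ is the same, and both arguments finish by invoking the Correlation Inequality for the rooted-subtree pairs $T_2(x)\subseteq T_1(x)$ and $T_4(y)\subseteq T_3(y)$ to see that the two factors have the same sign (determined by which of $i,j$ is closer to the centre), with the parity restriction and the product giving the $\hom_k$ and $g$ inequalities just as in the paper.
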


\begin{proof}  Let the vertices of $P_n$ be labeled consecutively by
$1,2,\ldots,n$. For a rooted tree $T(r)$ let $h(T,i)$ denote the number of
homomorphisms of $T$ into $P_n$ such that $r$ goes to the vertex $i$. So
$(h(T,1), h(T,2),\dots ,h(T,n))$ is the hom-vector of $T(r)$ into $P_n$. Let
$a_{ij}$ be the number of homomorphisms of $R(u,v)$ into $P_n$ such that $u$
goes to $i$ and $v$ goes to $j$. Note that $a_{ij}=a_{ji}$ because of the
automorphism of order $2$ of $R$ switching the vertices $u$ and $v$. Also note
that $a_{ij}=0$ if $i$ and $j$ are incongruent modulo $2$ since 
the distance of $u$ and $v$ is even. Observe that for $k=0,1$ we have
$$\hom_k(T'(u),P_n)=\sum_{i,j\equiv
  k\ (2)}a_{ij}h(T_1,i)h(T_3,i)h(T_2,j)h(T_4,j)$$  
and 
$$\hom_k(T(u),P_n)=\sum_{i,j\equiv
  k\ (2)}a_{ij}h(T_1,i)h(T_4,i)h(T_2,j)h(T_3,j).$$  
Using $a_{ij}=a_{ji}$ we can rewrite these equations as follows:
$$\hom_k(T'(u),P_n)=\sum_{i\equiv
  k\ (2)}a_{ii}h(T_1,i)h(T_3,i)h(T_2,i)h(T_4,i)+$$   
$$+\sum_{i<j \atop i,j\equiv k\ (2)}a_{ij}(h(T_1,i)h(T_3,i)h(T_2,j)h(T_4,j)+
h(T_1,j)h(T_3,j)h(T_2,i)h(T_4,i)),$$
and  
$$\hom_k(T(u),P_n)=\sum_{i\equiv k\ (2)}a_{ii}h(T_1,i)h(T_3,i)h(T_2,i)h(T_4,i)+$$
$$+\sum_{i<j \atop i,j\equiv k\ (2)}a_{ij}(h(T_1,i)h(T_4,i)h(T_2,j)h(T_3,j)+
h(T_1,i)h(T_4,i)h(T_2,j)h(T_3,j)).$$
Hence
$$\hom_k(T'(u),P_n)-\hom_k(T(u),P_n)=$$
$$\sum_{i<j \atop i,j\equiv k\ (2)}
a_{ij}(h(T_1,i)h(T_2,j)-h(T_1,j)h(T_2,i))(h(T_3,i)h(T_4,j)-h(T_3,j)h(T_4,i)).$$  
By the correlation inequalities, the signs of 
$$h(T_1,i)h(T_2,j)-h(T_1,j)h(T_2,i)$$ 
and
$$h(T_3,i)h(T_4,j)-h(T_3,j)h(T_4,i)$$
only depend on the positions of $i$ and $j$, so they are the same. Hence
$$\hom_k(T'(u),P_n)-\hom_k(T(u),P_n)\geq 0.$$
\end{proof}

The main problem with the LS-switch is that it preserves the sizes of the color
classes of the tree (considered as a bipartite graph). So we need another
transformation which can help to compare trees with different color class sizes.
For this reason we introduce the following transformation which we will call
{\em short path shift}. 

\begin{figure}[h!]
\begin{center}
\scalebox{.65}{\includegraphics{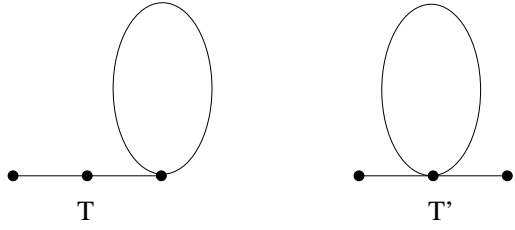}}
\caption{Short-path
  shift.}    
\end{center}
\end{figure}

\begin{Th} Assume that $T'$ is obtained from the rooted tree $T_1(u)$ by
attaching it to the middle vertex of a path on $3$ vertices. Let us assume
that $T$ is obtained from $T_1(u)$ by attaching it to an end vertex of a
path on $3$ vertices.   Then for any odd $n\geq 5$, we have
$$g(T',P_n)\geq g(T,P_n).$$
\end{Th}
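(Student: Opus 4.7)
The plan is to compute both $g(T,P_n)$ and $g(T',P_n)$ explicitly in terms of the hom-vector of $T_1(u)$ into $P_n$ via the tree-walk algorithm, and then reduce the inequality $g(T',P_n)\geq g(T,P_n)$ to a clean sum to which the log-concavity lemma applies term by term. Write $(a_1,\ldots,a_n)={\bf h}(T_1,u,P_n)$, let $A$ be the adjacency matrix of $P_n$, and denote $x=a_1=a_n$, $y=a_2=a_{n-1}$ (by the symmetry part of symmetric bi-unimodality), $S_o=\sum_{i\text{ odd}}a_i$, $S_e=\sum_{i\text{ even}}a_i$.

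First I would compute the two extra hom-vectors contributed by the $P_3$-attachment. In $T$, the attached subtree (rooted at $u$) is $P_3$ at an endpoint, with hom-vector ${\bf 1}_nA^2=(2,3,4,4,\ldots,4,3,2)$. In $T'$, the attached subtree is a cherry (star with two leaves) rooted at the center, with hom-vector equal to the squared degree sequence $(1,4,4,\ldots,4,1)$. Recursion~1 then yields
\begin{align*}
{\bf h}(T,u,P_n)&=(2a_1,3a_2,4a_3,\ldots,4a_{n-2},3a_{n-1},2a_n),\\
{\bf h}(T',u,P_n)&=(a_1,4a_2,4a_3,\ldots,4a_{n-1},a_n).
\end{align*}
Separating odd/even index contributions (and using $n$ odd so that the endpoints lie in odd positions) gives
$$\hom_1(T,P_n)=4S_o-4x,\quad\hom_0(T,P_n)=4S_e-2y,$$
$$\hom_1(T',P_n)=4S_o-6x,\quad\hom_0(T',P_n)=4S_e.$$

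Multiplying and expanding, the difference simplifies to
$$g(T',P_n)-g(T,P_n)=8\bigl[y(S_o-x)-xS_e\bigr],$$
so the task reduces to proving $y(S_o-x)\geq xS_e$. Here I would invoke the log-concavity of the hom-vector: for each $k=1,2,\ldots,(n-1)/2$, applying it with $i=1$ and $j=2k$ (different parities) gives $a_1a_{2k}\leq a_2a_{2k-1}$, i.e., $xa_{2k}\leq ya_{2k-1}$. Summing over $k$ yields $xS_e\leq y(a_1+a_3+\cdots+a_{n-2})$, and by the symmetry $a_i=a_{n+1-i}$ the right-hand parenthesis equals $a_3+a_5+\cdots+a_n=S_o-x$. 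This is exactly what is needed.

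The only non-bookkeeping step is the last one, but it is a one-line consequence of the log-concavity lemma already proved in the excerpt, so there is no serious obstacle. The mildly delicate points are (i) getting the $(2,3,4,\ldots,4,3,2)$ pattern right since the endpoints of $P_n$ have degree $1$, and (ii) using the symmetry of the hom-vector to match the index sets $\{1,3,\ldots,n-2\}$ and $\{3,5,\ldots,n\}$ when comparing $S_o-x$ with the sum coming from log-concavity. The hypothesis $n\geq 5$ (odd) is used only to ensure that the range $k=1,\ldots,(n-1)/2$ is nonempty and that the coefficient $4$ actually appears in the interior of ${\bf 1}_nA^2$; the result is in fact trivial otherwise.
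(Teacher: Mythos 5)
Your proposal is correct and follows essentially the same route as the paper: compute the hom-vectors $(2a_1,3a_2,4a_3,\dots,4a_{n-2},3a_{n-1},2a_n)$ and $(a_1,4a_2,\dots,4a_{n-1},a_n)$, expand the difference $g(T',P_n)-g(T,P_n)$, and reduce it to the termwise inequalities $a_1a_{2k}\leq a_2a_{2k-1}$ supplied by the log-concavity lemma. Your aggregated form $8\bigl[y(S_o-x)-xS_e\bigr]$ is exactly the paper's sum $8\sum_{i \text{ odd},\,3\leq i\leq n-2}(a_2a_i-a_1a_{i+1})$, so the two arguments coincide.
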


We will say that $T'$ is obtained from $T$ by a short-path shift.

\begin{proof}
Let $(a_1,a_2,\dots ,a_n)$ be the hom-vector of $T_1(u)$. Then the hom-vector
of $T(u)$ is 
$$(2a_1,3a_2,4a_3,4a_4,\dots ,4a_{n-2},3a_{n-1},2a_n)$$ 
and the hom-vector of $T'(u)$ is  
$$(a_1,4a_2,4a_3,4a_4,\dots ,4a_{n-2},4a_{n-1},a_n).$$ 
Hence
$$g(T',P_n)=\biggl(4\sum_{i\equiv
  0\ (2)}a_i\biggr)\biggl(-3a_1-3a_n+4\sum_{i\equiv 1\ (2)}a_i\biggr),$$ 
while
$$g(T,P_n)=\biggl(-a_2-a_{n-2}+4\sum_{i\equiv
  0\ (2)}a_i\biggr)\biggl(-2a_1-2a_n+4\sum_{i\equiv 1\ (2)}a_i\biggr).$$
Using the symmetry $a_i=a_{n+1-i}$ we get that
$$g(T',P_n)-g(T,P_n)=8\sum_{i\equiv 1\ (2) \atop 3\leq i\leq n-2}
(a_2a_i-a_1a_{i+1}).$$ 
By the log-concavity of the hom-vector, all $a_2a_i-a_1a_{i+1}\geq 0$ for
$i\equiv 1\ (\bmod 2)$. Hence
$$g(T',P_n)\geq g(T,P_n).$$
\end{proof}

Observe that both transformations decreases the Wiener-index (sum of the
distances of every pair of vertices in $T$) strictly: $W(T')<W(T)$ if $T'$ is
obtained from $T$ by an LS-switch or short-path shift and $T'$ is not
isomorphic to $T$.

\begin{proof}[Proof of Theorem~\ref{g-path}] For $n=3$ we have
$g(T,P_3)=2^{|T|}$, so there is nothing to  prove. Hence we can assume that
  $n\geq 5$. 

Let us consider the tree $T^*_m$ on $m$ vertices for which $\hom(T^*_m,P_n)$ is
minimal and among these trees it has the largest Wiener-index. We show that
$T^*_m$ is $P_m$. Assume for contradiction that $T^*_m$ is not $P_m$. Let $v$
be an end vertex of a longest path of $T^*_m$. Clearly, $v$ is a leaf, let $u$
be its unique neighbor. We distinguish two cases. 

If $\deg(u)\geq 3$ then $T^*_m$ can be decomposed into two branches, one of
which is a star on at least $3$ vertices. (Otherwise, $v$ cannot be an end
vertex of a longest path.) Hence $u$ has another neighbor $w$ which is a leaf.
In this case, $T^*_m$ is an image of a tree $\overline{T}$ by a short-path shift
with respect to the path $vuw$. Hence $g(\overline{T},P_n)\leq g(T^*_m,P_n)$
and  $W(\overline{T})>W(T^*_m)$. This contradicts the choice of $T^*_m$.

If $\deg(u)=2$ then let $w$ be the closest vertex to $v$ having degree at
least $3$. (Such a $w$ must exist, because $T^*_m$ is not $P_m$.) Note that
$d(v,w)\geq 2$. Let us decompose the tree $T^*_m$ into branches $P(v,w)$,
$T_1(w)$ and $T_3(w)$ at the vertex $w$, where  $T_1(w)$ and $T_3(w)$ are
non-trivial trees. If $d(v,w)$ is even, then  $T^*_m$ is an image of  a tree
$\overline{T}$ by an LS-switch, where $T_2(v)=T_4(v)$ are one-vertex trees. If
$d(v,w)$ is odd (consequently $d(u,w)$ is even), then   $T^*_m$ is an
image of  a tree $\overline{T}$ by an LS-switch, where $T_2(u)$ is a one-vertex
tree and $T_4(u)$ is the rooted tree on the vertex set $\{u,v\}$. In both
cases $g(\overline{T},P_n)\leq g(T^*_m,P_n)$ and  $W(\overline{T})\geq
W(T^*_m)$. In the first case, the second inequality is strict contradicting
the choice of $T^*_m$. In the second case, it may occur that $T_3(w)$ is also
an edge implying that $T^*_m=\overline{T}$. By changing the role of $T_2(v)$
and $T_4(v)$, we can ensure that $T_1(w)$ is also an edge (otherwise we get
the same contradiction as before). Hence in this case $T_m^*$ is a path with
an edge attached on the second vertex. In this case we can realize that it is
a short-path shift of a path at the vertex $w$. Hence we get a contradiction
in this case too, which finishes the proof of the theorem.
\end{proof}

To prove part (ii) of Theorem~\ref{into paths}, we will distinguish two cases
according to the parity of $m$. 
\subsubsection{{\bf Trees on even number of vertices}}
 If $m$ is even, then
$g(P_m,P_n)=\frac{1}{4}\hom(P_m,P_n)^2$ since 
$h_A=h_B=\frac{1}{2}\hom(P_m,P_n)$. (However, the color classes of $P_n$ are
not symmetric, but the color classes of $P_m$ are symmetric so it is equally
likely which color class goes to which one.) Hence by Theorem~\ref{g-path} we
have 
$$\hom(T_m,P_n)\geq 2g(T_m,P_n)^{1/2}\geq 2g(P_m,P_n)^{1/2}=\hom(P_m,P_n).$$
If $m$ is odd then we still need to work a bit.

\subsubsection{{\bf Trees on odd number of vertices}} 
From now on $n$ and $m$ are odd. Since $m$ is odd, it makes sense to speak
about large and small color class of the tree considered as a bipartite
graph. If $T_m$ is a tree on $m$ vertices, then $S$ will denote the color
class of size at most $\frac{m-1}{2}$  and $L$ will denote the color
class of size at least $\frac{m+1}{2}$. $S$ and $L$ stands for small and large.
The notation $\hom_0(T(S),P_n)$ denotes   $\hom_0(T(u),P_n)$, where $u\in
S$. Hence it means that the small color class goes to the even-indexed
vertices of the path. We can similarly define $\hom_1(T(S),P_n)$. 

The following is a simple observation.

\begin{Lemma} \label{01-path}
$$\hom_0(P_m(S),P_n)\geq \hom_1(P_m(S),P_n).$$
\end{Lemma}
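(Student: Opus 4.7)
The plan is to reformulate $\hom_0 - \hom_1$ as a signed walk count in $P_n$ and then apply a reversal involution together with a short linear-algebra identity. A homomorphism $f\colon P_m \to P_n$ is exactly a walk $v_1 v_2 \cdots v_m$ of length $m-1$ in $P_n$. Choosing $u = 2 \in S$, the image $f(u) = v_2$ has even index in $P_n$ precisely when $v_1$ has odd index, since adjacent vertices of $P_n$ have opposite parities. Letting $A$ be the adjacency matrix of $P_n$, $\mathbf{1}$ the all-ones column vector and $c$ the vector with $c_i = (-1)^{i-1}$, we obtain
$$
\hom_0(P_m(S), P_n) - \hom_1(P_m(S), P_n) = \sum_{W} (-1)^{v_1(W)-1} = c^T A^{m-1} \mathbf{1} =: a_{m-1},
$$
where the sum is over all walks $W$ of length $m-1$ in $P_n$. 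Since $m$ is odd, $m-1$ is even, and the goal becomes $a_{m-1} \geq 0$.

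The first step is to prove that $a_k = 0$ for every odd $k$ by a sign-reversing involution. Reversal $W = v_1 v_2 \cdots v_{k+1} \mapsto W^R = v_{k+1} v_k \cdots v_1$ is an involution on walks of length $k$. When $k$ is odd, the starting and ending vertices of any such walk have indices of opposite parity (walks in $P_n$ alternate between the two color classes), so $W$ and $W^R$ carry opposite signs in the sum. Moreover, no walk of odd length is fixed by reversal: if $W = W^R$, the two central vertices $v_{(k+1)/2}$ and $v_{(k+3)/2}$ would have to coincide, yet they are adjacent in $P_n$ and hence distinct. The involution is therefore sign-reversing and fixed-point-free, which forces $a_k = 0$.

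The second step is to establish the recurrence $a_{k+1} = -2 a_k + 2 w_k(1)$, where $w_k(v) = (A^k \mathbf{1})_v$ counts walks of length $k$ starting at $v$. This rests on the identity $c^T A = -2 c^T + e_1^T + e_n^T$, which is checked by cases: for interior $j$ one has $(c^T A)_j = c_{j-1} + c_{j+1} = -2 c_j$, while at $j = 1$ and $j = n$ only one neighbor is available and a boundary $+1$ appears (here we use that $n$ is odd, so $c_1 = c_n = 1$). Multiplying on the right by $A^k \mathbf{1}$ and invoking the symmetry $w_k(n) = w_k(1)$ yields the recurrence. Applying it with $k = m - 2$ (odd) and combining with the first step gives $a_{m-1} = -2 \cdot 0 + 2 w_{m-2}(1) \geq 0$, proving the lemma. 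The main obstacle is spotting the reversal involution and checking that odd-length walks admit no palindromes; once that is in place, the remainder is a short manipulation in the spirit of the tree-walk algorithm of Section~\ref{treewalk}.
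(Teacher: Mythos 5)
Your proof is correct, and it is essentially the paper's argument in transfer-matrix clothing: your involution step (the vanishing of the signed count over odd-length walks) is exactly the paper's observation that the one-shorter even path $P_{m-1}$ sends its root to the two parity classes of $P_n$ equally often, and your identity $c^{T}A=-2c^{T}+e_1^{T}+e_n^{T}$ is the paper's leaf-peeling step, recording that both endpoints of $P_n$ have odd index when $n$ is odd. Both routes land on the same final formula, $\hom_0(P_m(S),P_n)-\hom_1(P_m(S),P_n)=2\,w_{m-2}(1)\geq 0$, twice the number of walks of length $m-2$ starting at an endpoint of $P_n$ (the paper's $2a_1$).
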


This lemma asserts that the small class of the path $P_m$ `likes' to go the
large class of the path  $P_n$.

\begin{proof} Let $u$ be a leaf of $P_m$. Note that $u\in L$. Let $v$ be its
  neighbor, and let $(a_1,a_2,\dots ,a_n)$ be the hom-vector of $P_{m-1}(v)$. 
Then the hom-vector of $P_{m}(v)$ is $(a_1,2a_2,\dots ,2a_{n-1},a_n)$. Note that
$$\hom_0(P_m(S),P_n)=\hom_0(P_m(v),P_n)=2\sum_{j\equiv 0\ (2)}a_j,$$
while 
$$\hom_1(P_m(S),P_n)=\hom_1(P_m(v),P_n)=-2a_1+2\sum_{j\equiv 1\ (2)}a_j.$$
Note that 
$$\sum_{j\equiv 0\ (2)}a_j=\sum_{j\equiv 1\ (2)}a_j$$
since $P_{m-1}$ has even number of vertices. Hence
$$\hom_0(P_m(S),P_n)\geq \hom_1(P_m(S),P_n).$$
\end{proof}

The following theorem is the main result of this part of the proof. It will
imply the minimality of the path.

\begin{Th} \label{S-hom} Let $m$ be odd. Then for any tree $T_m$ on $m$
vertices we have 
$$\hom_0(T_m(S),P_n)\geq \hom_0(P_m(S),P_n).$$
\end{Th}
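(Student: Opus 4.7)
The plan is to adapt the extremal--plus--Wiener-index strategy from the proof of Theorem~\ref{g-path}. Let $T_m^*$ be a tree on $m$ vertices achieving the minimum of $\hom_0(T_m(S),P_n)$ and, among all such minimizers, having the largest Wiener index. The goal is to show $T_m^*=P_m$, which yields the theorem.

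Two transformations already in the paper will do the work: the LS-switch and the short-path shift. For the LS-switch, the bipartition of the tree is preserved by a natural bijection between color classes, so the small class $S$ is meaningful on both sides. Theorem~\ref{LS-Th} gives $\hom_k(T'(u),P_n)\geq\hom_k(T(u),P_n)$ for every vertex $u$ and $k\in\{0,1\}$; taking $u\in S$ yields $\hom_0(T'(S),P_n)\geq\hom_0(T(S),P_n)$. For the short-path shift at attachment vertex $u$, I would read off from the tree-walk algorithm
\begin{align*}
{\bf h}(T'(u),P_n)&=(a_1,4a_2,4a_3,\ldots,4a_{n-1},a_n),\\
{\bf h}(T(u),P_n)&=(2a_1,3a_2,4a_3,\ldots,4a_{n-2},3a_{n-1},2a_n),
\end{align*}
where $(a_1,\ldots,a_n)={\bf h}(T_1(u),P_n)$, whence (summing over even/odd positions)
$$\hom_0(T'(u),P_n)-\hom_0(T(u),P_n)=a_2+a_{n-1},\quad\hom_1(T'(u),P_n)-\hom_1(T(u),P_n)=-(a_1+a_n).$$
Since $m$ is odd, the two color classes of every tree on $m$ vertices have different sizes, so the label $S$ is unambiguous. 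Tracking how the class sizes shift under each of the two transformations (they add two vertices to different classes) identifies $\hom_0(\cdot(S),P_n)$ with either $\hom_0(\cdot(u),P_n)$ or $\hom_1(\cdot(u),P_n)$, and the comparison reduces in each case to one of the two identities above.

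With these monotonicity facts available, the extremal argument runs exactly as in Theorem~\ref{g-path}. Assuming $T_m^*\neq P_m$, pick a leaf $v$ at one end of a longest path with neighbor $u$. If $\deg u=2$, an LS-switch along the appropriate subpath produces a tree $\overline T$ with strictly larger Wiener index and $\hom_0(\overline T(S),P_n)\leq\hom_0(T_m^*(S),P_n)$. If $\deg u\geq 3$, the longest-path condition forces a second pendant leaf $w$ at $u$, and reversing a short-path shift at $\{v,w\}$ yields the analogous $\overline T$. Either situation contradicts the extremal choice of $T_m^*$, so $T_m^*=P_m$ and Lemma~\ref{01-path} then provides the base inequality $\hom_0(P_m(S),P_n)\geq\hom_1(P_m(S),P_n)$ needed to anchor the comparison.

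The main obstacle I anticipate is the short-path-shift case when $u$ lies deep in $L$, i.e.\ its color class in $T_1(u)$ has at least three more vertices than the opposite one: then $u\in L$ in both $T'$ and $T$, so $\hom_0(\cdot(S),P_n)$ is read from $\hom_1(\cdot(u),P_n)$, whose difference $-(a_1+a_n)$ has the wrong sign. To surmount this I would exploit the longest-path structure to always choose the leaf $v$ so that its neighbor $u$ lies in $S$---the parity of the bipartition typically forces a degree-$\geq 3$ vertex bearing two leaves into the smaller class---or, when this fails, insert a preparatory LS-switch to rearrange the offending structure so that the short-path shift may be applied at a vertex of $S$. Verifying that at least one of these options is always available in a non-path $T_m^*$ is the crux of the argument.
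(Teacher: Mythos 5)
Your extremal scheme (minimize $\hom_0$, break ties by Wiener index) and your use of the LS-switch are sound: the LS-switch preserves the sizes of the color classes, so Theorem~\ref{LS-Th} does give $\hom_0(T'(S),P_n)\geq\hom_0(T(S),P_n)$. The gap is exactly where you place it, and as written it is fatal: the short-path shift is not monotone for $\hom_0(\cdot(S),P_n)$. Writing $(c_1,c_2)$ for the sizes of the color classes of $T_1(u)$, with $c_1$ on the side of $u$, the shift moves one vertex from the class of $u$ to the opposite class; besides your ``$u$ deep in $L$'' case $c_1\geq c_2+3$, where the relevant difference is $-(a_1+a_n)\leq 0$ (the wrong sign), there is also the mixed case $c_1=c_2+1$, in which $u\in L$ in $\overline T$ but $u\in S$ in $T_m^*$, so you must compare an odd-position sum of one hom-vector with an even-position sum of the other, and neither of your two identities settles this. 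Your proposed rescues are not substantiated: it is false that a degree-$\geq 3$ vertex carrying two pendant leaves must lie in the small class (attach two leaves to $u$ and join $u$ to the center of a large star; then $u\in L$), and one can arrange that both ends of every longest path terminate at such bad vertices (two vertices, each with two pendant leaves, joined to a common neighbor carrying many leaves), so ``choose the other end'' does not always work and no preparatory LS-switch is exhibited. Since you yourself flag this verification as the crux, the proof is incomplete at its decisive step.

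The paper avoids the short-path shift entirely for this theorem and substitutes three other ingredients: (i) the leaf-move lemma (Lemma~\ref{leaf-move}) --- deleting a leaf of $L$ and re-attaching it at any vertex of $S$ leaves $\hom_0(\cdot(S),P_n)$ unchanged; (ii) claw-deletion (Lemma~\ref{claw-deletion}), which strictly decreases $\hom_0$ provided $|L|-|S|\geq 3$, a condition secured by Lemma~\ref{leaves} and Remark~\ref{leaf-rem} when all leaves lie in $L$; and (iii) a finer extremal choice: among minimizers, first take the fewest leaves and only then the largest Wiener index. Then if $S$ contains a leaf, a leaf-move followed by an LS-switch strictly decreases the number of leaves, and if $S$ contains no leaf, a leaf-move creates a claw whose deletion strictly decreases $\hom_0$; either way the extremal choice is contradicted. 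Your proposal would need an equivalent substitute for the failed monotonicity of the short-path shift before it can go through.
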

\bigskip

\noindent \textbf{Theorem~\ref{into paths}~(ii) $n,m$ are odd positive integers. } Let $m$ be odd. For any tree $T_m$
  on $m$ vertices we have 
$$\hom(T_m,P_n)\geq \hom(P_m,P_n).$$

\begin{proof} From Theorem~\ref{g-path} we know that
$$g(T_m,P_n)\geq g(P_m,P_n).$$
In other words, 
$$\hom_0(T_m(S),P_n)\hom_1(T_m(S),P_n)\geq
\hom_0(P_m(S),P_n)\hom_1(P_m(S),P_n).$$
By Theorem~\ref{S-hom} and Lemma~\ref{01-path} we have
$$\hom_0(T_m(S),P_n)\geq \hom_0(P_m(S),P_n)\geq \hom_1(P_m(S),P_n).$$
These inequalities together imply that
$$\hom_0(T_m(S),P_n)+\hom_1(T_m(S),P_n)\geq
\hom_0(P_m(S),P_n)+\hom_1(P_m(S),P_n).$$
Hence 
$$\hom(T_m,P_n)\geq \hom(P_m,P_n).$$
\end{proof}

Now we start to prove Theorem~\ref{S-hom}. We need a few lemmas. The first one
is trivial, but crucial.

\begin{Lemma} \label{leaf-move} Let $T_1$ be a tree on $m$ vertices, and let
$u\in L$ be a leaf. Let $v\in S$ an arbitrary vertex of the small class. Let
$T_2$ be a tree obtained from $T_1$ by deleting the vertex $u$ from $T_1$ and
attaching a leaf $u'$ to $v$. (So we simply move a leaf of the large class
to another place, but we take care  not changing the sizes of the color
classes.) Then 
$$\hom_0(T_1(S),P_n)=\hom_0(T_2(S),P_n).$$
\end{Lemma}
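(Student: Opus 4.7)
The plan is to count $\hom_0(T_1(S),P_n)$ and $\hom_0(T_2(S),P_n)$ by first restricting homomorphisms to the common subtree $T_1-u = T_2-u'$, then summing over the number of ways to extend across the moved leaf. The key observation will be that because $n$ is odd, the two endpoints $1$ and $n$ of $P_n$ both lie at odd indices, so every even-indexed vertex of $P_n$ has degree exactly $2$.

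First I would record the color-class bookkeeping. Since $u\in L$ is removed and the new leaf $u'$ is attached at $v\in S$, the new leaf $u'$ automatically falls in $L$; hence $T_1-u$ and $T_2-u'$ are literally the same tree, and their bipartition is $(S,\,L\setminus\{u\})$ in both cases. In particular the small color class is $S$ in $T_2$ as well, so $\hom_0(T_2(S),P_n)$ is indeed well-defined in the sense of the definition. Let $w\in S$ denote the unique neighbor of $u$ in $T_1$.

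Next, fix any homomorphism $f$ of $T_1-u$ into $P_n$ with $f(S)$ contained in the even-indexed vertices. Because $P_n$ is bipartite and $w\in S$, the number of ways to extend $f$ to a homomorphism of $T_1$ by placing $u$ equals the number of odd-indexed neighbors of $f(w)$, which is exactly $\deg_{P_n}(f(w))$. The same reasoning applied to $T_2$ gives $\deg_{P_n}(f(v))$ extensions per $f$. Therefore
$$
\hom_0(T_1(S),P_n)=\sum_{f}\deg_{P_n}(f(w)),\qquad
\hom_0(T_2(S),P_n)=\sum_{f}\deg_{P_n}(f(v)),
$$
where both sums range over the \emph{same} set of homomorphisms $f$ of $T_1-u$ sending $S$ into the even-indexed vertices of $P_n$.

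The conclusion then drops out: because $n$ is odd, the two degree-$1$ vertices of $P_n$ (namely $1$ and $n$) both carry odd indices, so every even-indexed vertex of $P_n$ is internal, hence has degree $2$. Since $w,v\in S$, the images $f(w)$ and $f(v)$ are always even-indexed, so both sums collapse to $2\cdot N$, where $N$ is the number of homomorphisms of $T_1-u$ into $P_n$ sending $S$ to even indices, and we get equality. There is no real obstacle in this lemma; the only subtlety is the color-class bookkeeping (ensuring that moving a leaf from $L$ to attach at a vertex of $S$ preserves the bipartition), and the crucial use of the parity of $n$, which makes the endpoint degrees irrelevant to the sum.
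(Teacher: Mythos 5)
Your proof is correct and follows essentially the same route as the paper: both restrict to the common tree $T^*=T_1-u=T_2-u'$ and count extensions across the moved leaf, the point being that each admissible homomorphism of $T^*$ extends in exactly two ways since, for odd $n$, every even-indexed vertex of $P_n$ is internal of degree $2$. Your write-up merely makes explicit the parity argument that the paper leaves implicit in the phrase ``in exactly two ways.''
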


\begin{proof} Let $T^*=T_1-u=T_2-u'$. Note that
$$\hom_0(T_1(S),P_n)=2\hom_0(T^*(S),P_n)$$
since any homomorphism of $T^*$ into $P_n$, where the small class goes to
even-indexed vertices, can be extended into a similar homomorphism of $T_1$
in exactly two ways. Similarly,
$$\hom_0(T_2(S),P_n)=2\hom_0(T^*(S),P_n).$$
Hence 
$$\hom_0(T_1(S),P_n)=\hom_0(T_2(S),P_n).$$
\end{proof}

We will use the following transformation too. 

\begin{Def}[Claw-deletion] Let $T'$ be a tree which contains a claw: three
leaves attached to the same vertex. Let $T$ be obtained from $T'$ by deleting
the three leaves and attaching a path of length $3$ to the common neighbor of
the leaves. We call this transformation \textit{claw-deletion}.
\end{Def}

\begin{figure}[h!]
\begin{center}
\scalebox{.65}{\includegraphics{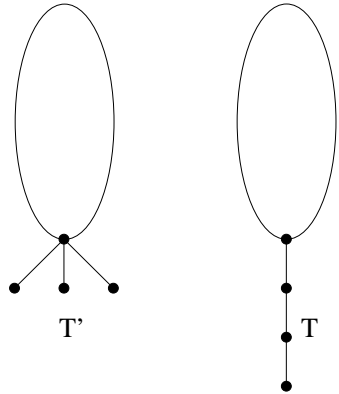}}\caption{Claw-deletion.}    
\end{center}
\end{figure}

Note that the claw-deletion changes the sizes of the color classes. We have
to be careful when we apply it, because it may occur that claw-deletion 
changes the small class into the large one. 

We need the following property of the claw-deletion.

\begin{Lemma} \label{claw-deletion} Let $T'$ be a tree on $m$ vertices with
  color classes $S'$ and $L'$. Assume that $|L'|-|S'|\geq 3$ and $T'$ contains
  a claw. Let $T$ be obtained from $T'$ by a claw-deletion, where we assume
  that the center $v$ of the claw is in the class $S'$. Then
$$\hom_0(T'(S'),P_n)>\hom_0(T(S),P_n).$$
\end{Lemma}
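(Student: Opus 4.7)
The plan is to reduce both sides to a single quantity — the number of homomorphisms of the ``core'' subtree $T^*:=T'-\{u_1,u_2,u_3\}=T-\{w_1,w_2,w_3\}$ into $P_n$ — and then account for how the three claw leaves (on the $T'$ side) and the attached $P_4$-tail $v{-}w_1{-}w_2{-}w_3$ (on the $T$ side) extend a given homomorphism of $T^*$. First I would check the bipartite bookkeeping: since $v\in S'$, passing from $T'$ to $T$ adds $w_2$ to the class of $v$ and (net) removes one vertex from the opposite class, so $|S|=|S'|+1$ and $|L|=|L'|-1$. The hypothesis $|L'|-|S'|\ge 3$ is exactly what ensures $|L|-|S|=|L'|-|S'|-2\ge 1$, so $v$ still sits in the small class of $T$; hence $\hom_0(T(S),P_n)$ really counts homomorphisms sending $v$ to an even-indexed vertex. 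Similarly $v\in S^*$ in $T^*$.

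Let $h^*_j$ denote the number of homomorphisms of $T^*$ to $P_n$ with $v\mapsto j$. Since $P_n$ has odd order, every even index $j$ lies in $\{2,\dots,n-1\}$, hence has both odd neighbors $j-1$ and $j+1$, and each of $u_1,u_2,u_3$ can be mapped independently in $2$ ways. Therefore
\[
\hom_0(T'(S'),P_n)=8\sum_{j\text{ even}}h^*_j.
\]
For $T$, write $g(j)$ for the number of walks of length $3$ in $P_n$ starting at $j$ whose first and third steps land on odd vertices (so $w_1,w_3$ take the correct colour and $w_2$ is even). A direct enumeration gives $g(j)=2^3=8$ whenever $4\le j\le n-3$, while $g(2)=g(n-1)=6$, the deficit of $2$ coming from the single walk branch that would have to leave $P_n$ at the boundary. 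Consequently
\[
\hom_0(T'(S'),P_n)-\hom_0(T(S),P_n)=\sum_{j\text{ even}}h^*_j\bigl(8-g(j)\bigr)=2h^*_2+2h^*_{n-1}.
\]

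It remains to show $h^*_2,h^*_{n-1}\ge 1$. I would exhibit the explicit ``two-vertex folding'' homomorphism: send every vertex of $T^*$ at even distance from $v$ to $2$ and every vertex at odd distance from $v$ to $1$; bipartiteness of $T^*$ and the fact that $1,2$ are adjacent in $P_n$ make this a valid homomorphism, so $h^*_2\ge 1$, and symmetrically $h^*_{n-1}\ge 1$. The difference above is then $\ge 4>0$, giving the strict inequality. The main obstacle is not any deep combinatorial step but rather the careful bookkeeping — first checking that the colour classes of $T$ relate to those of $T'$ in the way needed for $\hom_0(T(S),\cdot)$ to still mean ``$v$ goes to an even vertex'', and second doing the little boundary case analysis of $g(j)$ on $P_n$, which is the only place strictness of the inequality is produced.
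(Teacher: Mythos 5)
Your proposal is correct and follows essentially the same route as the paper: reduce to the core tree $T^*$, compare the extension factors at $v$ ($8$ per even position for the claw versus the number of length-$3$ walks for the attached path), and observe the difference is concentrated at positions $2$ and $n-1$. The paper writes this difference as $4a_2$ using the symmetry $a_2=a_{n-1}$ of the hom-vector, which matches your $2h^*_2+2h^*_{n-1}$; your explicit checks of the colour-class bookkeeping and of $h^*_2\geq 1$ are just more detailed versions of what the paper asserts in passing.
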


\begin{proof} Note that the condition $|L'|-|S'|\geq 3$ guarantees that the
  small class cannot become the large one in $T$. 
Let $u_1,u_2,u_3$ be the leaves of $T$, and let $v$ be their
common neighbor. Let $T^*=T'-\{u_1,u_2,u_3\}$. Let $(a_1,a_2,\dots ,a_n)$ be the
  hom-vector of $T^*(v)$. Then the hom-vector of $T'(v)$ is 
$$(a_1,8a_2,\dots ,8a_{n-1},a_n),$$
and the hom-vector of $T(v)$ is
$$(3a_1,6a_2,7a_3,8a_4,\dots ,6a_{n-1},3a_n)$$
if $n\geq 7$. If $n=5$ then the hom-vector of $T'(v)$ is 
$(a_1,8a_2,8a_3,8a_4,a_5)$, while the hom-vector of $T(v)$ is
$(3a_1,6a_2,6a_3,6a_4,3a_5)$. In both cases
$$\hom_0(T'(S'),P_n)-\hom_0(T(S),P_n)=4a_2.$$
\end{proof}

Our strategy will be the following. We transform a tree into a path by moving
leaves and using the LS-switch and claw-deletion repeatedly. If we want to
apply this last operation, we need to be sure that the condition
$|L'|-|S'|\geq 3$ holds. The following lemma will be useful.

\begin{Lemma} \label{leaves} Let $T$ be a tree with color classes $A$ and
$B$. Assume that all leaves of $T$ belong to the color class $A$. Then
$|A|\geq |B|$. 
\end{Lemma}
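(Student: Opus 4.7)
The plan is to prove this by a one-line double-counting argument on the edges of $T$. Since $T$ is a tree, $|E(T)|=|A|+|B|-1$, and since $T$ is bipartite with bipartition $(A,B)$, every edge has exactly one endpoint in $B$. Hence
$$\sum_{b\in B}\deg(b)\;=\;|E(T)|\;=\;|A|+|B|-1.$$
By hypothesis every leaf of $T$ lies in $A$, so every vertex of $B$ has degree at least $2$, which gives $\sum_{b\in B}\deg(b)\geq 2|B|$. Combining the two bounds yields $2|B|\leq |A|+|B|-1$, i.e.\ $|B|\leq |A|-1$, which is even slightly stronger than the claimed inequality $|A|\geq|B|$.

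A natural alternative would be induction on $|V(T)|$: take a leaf $u\in A$ and pass to $T-u$. The obstacle with this route is that the unique neighbor $v\in B$ of $u$ may have degree exactly $2$ in $T$, in which case $v$ becomes a leaf of $T-u$ lying in $B$, and the inductive hypothesis fails to apply to $T-u$. One can still salvage the argument (for instance by deleting the entire pendant path ending at $v$ until the current endpoint in $B$ has degree at least $3$ in what remains), but the direct edge count above bypasses all these case distinctions, so I would simply present it that way. There is no real difficulty: the lemma is essentially the bipartite handshake identity combined with the degree lower bound forced by the leaf hypothesis.
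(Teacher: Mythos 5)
Your proof is correct, but it takes a genuinely different route from the paper. The paper argues by induction on the number of vertices: it picks a leaf $u_1$ with neighbor $v$, deletes all leaves adjacent to $v$, deletes $v$ too if it has become a leaf, checks that the remaining tree still has all its leaves in one color class, and then compares class sizes — which is essentially the ``salvaged induction'' you sketch as an alternative, including the very complication you point out (the neighbor in $B$ becoming a leaf after deletion). Your actual argument is the direct handshake count: in a tree with at least two vertices, every vertex of $B$ would be a leaf if it had degree $1$, so the leaf hypothesis forces $\deg(b)\geq 2$ for all $b\in B$, and since each edge of the tree has exactly one endpoint in $B$, summing degrees over $B$ gives $2|B|\leq \sum_{b\in B}\deg(b)=|E(T)|=|A|+|B|-1$, hence $|B|\leq |A|-1$. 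This is shorter than the paper's induction, avoids its case analysis entirely, and even yields the slightly stronger bound $|A|\geq |B|+1$ whenever $B\neq\emptyset$; the only thing to note is the degenerate one-vertex tree (where a vertex of $B$ could have degree $0$), a vacuous or trivial case that the paper likewise dismisses, and the case $|V(T)|=2$, where the hypothesis cannot hold. In the paper's application (trees with a claw and all leaves in the large class) these degeneracies never arise, so your argument is a perfectly good, and arguably cleaner, replacement.
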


\begin{proof} We prove the statement by induction on the number of
vertices. If $T$ has at most $3$ vertices, the claim is trivial. Assume that
$T$ has $n$ vertices and we have proved the statement for trees on at most
$n-1$ vertices. Let $u_1$ be a leaf of $T$, and let $v$ be its unique
neighbor. Let $u_1,\dots ,u_k$ be the leaves of $T$ adjacent to $v$. Note that
$u_1,\dots ,u_k\in A$ and $v\in B$. Erase $u_1,\dots ,u_k$ from $T$, and let
$T^*$ be the obtained tree. If $v$ is a leaf in the obtained tree then erase
it as well and let us denote the resulted tree by $T^{**}$, otherwise
$T^{**}=T^{*}$.  It may occur that $T^{**}$ is the empty graph, but it
is not problem. For $T^{**}$ it is still true that one color class, $A^{**}$,
contains all leaves. Thus by induction $|A^{**}|\geq |B^{**}|$. Then
$$|A|=|A^{**}|+k\geq |B^{**}|+1\geq |B|,$$
as desired.
\end{proof}

\begin{Rem} \label{leaf-rem} Let $T'$ be a tree on $m$ vertices with color
classes $L$ and $S$. Assume that $T'$ contains a claw, and all leaves belong
to $L$. Then $|L|-|S|\geq 3$. Indeed, if we delete two leaves from the claw
then it is  still true for the resulting tree $T^*$ that all leaves belong
to one color class, and it must be the larger one. Hence $|L^*|\geq |S^*|$,
and since $m$ is odd, we have $|L^*|\geq |S^*|+1$. Thus for the original
tree $T'$ we have $|L|-|S|\geq 3$.
\end{Rem}

\begin{proof}[Proof of Theorem~\ref{S-hom}] 
Let $\mathcal{T}_m$ be the set of trees on $m$ vertices which minimizes
$\hom_0(T(S),P_n)$. From $\mathcal{T}_m$ let us choose the tree 
$\overline{T}_m$, which has the smallest number of leaves and among these trees
  it has the largest Wiener-index. We show that $\overline{T}_m$ must be
  $P_m$. Assume for contradiction that $\overline{T}_m\neq P_m$.

First of all, $\overline{T}_m$ cannot be an image of an LS-switch, because if
$\overline{T_m}$ can be obtained from a tree $F$ by an LS-switch, then by
Theorem~\ref{LS-Th} we have 
$$\hom_0(F(S),P_n)\leq \hom_0(\overline{T}_m(S),P_n),$$ 
and $F$ has at most as many leaves as $\overline{T}_m$,
and $W(F)>W(\overline{T}_m)$. Hence it would contradict the choice of
$\overline{T}_m$.

Let $S$ and $L$ be the small and large class of $\overline{T}_m$,
respectively. Note that $L$ must contain a leaf by Lemma~\ref{leaves}. We will
distinguish two cases according to $S$ containing a leaf too or not.

{\bf Case 1.} $S$ contains a leaf $v$. Let $x$ be the unique
neighbor of $v$. 
First, we show that $x$ has degree at least $3$. 
If $\deg(x)=2$ then let $w$ be the closest vertex to $v$ having degree at
least $3$. Such a $w$ must exist, because $\overline{T}_m$ is not $P_m$.
Note that $d(v,w)\geq 2$. Let us decompose the tree $\overline{T}_m$ into
branches $P(v,w)$, $T_1(w)$ and $T_3(w)$ at the vertex $w$, where  $T_1(w)$
and $T_3(w)$ are non-trivial trees. If $d(v,w)$ is even, then
$\overline{T}_m$ is an image of  a tree $F$ by an LS-switch, where
$T_2(v)=T_4(v)$ are one-vertex trees. If $d(v,w)$ is odd (consequently
$d(x,w)$ is even), then $\overline{T}_m$ is an image of  a tree $F$ by an
LS-switch, where $T_2(x)$ is a one-vertex tree and $T_4(x)$ is the rooted tree
on the vertex set $\{x,v\}$.   In the second case, it may occur that $T_3(w)$
is also an edge implying that $\overline{T}_m=F$. By changing the role of
$T_2(v)$ and $T_4(v)$, we can ensure that $T_1(w)$ is also an edge (otherwise
we get the same contradiction as before). Hence in this case $\overline{T}_m$
is a path with an edge attached on the second vertex. In this case we can
realise that it is an LS-switch of a path since $m$ is odd. Hence we get a
contradiction in this case too.

Now let $u\in L$ be a leaf. Let us delete it, and attach $u'$ to $v$. This way
we get a tree $T'$. By Lemma~\ref{leaf-move} we know that
$$\hom_0(\overline{T}_m(S),P_n)=\hom_0(T'(S),P_n).$$
Furthermore, $T'$ is an LS-switch  (in fact, an
even-KC-transformation) of a tree $F$. Indeed, let us decompose the tree $T'$
at $x$ to  $T_1(x),T_2(x)$ and the path $xvu'$. Thus if we move $T_2(x)$ from
$x$ to $u'$ we get a tree $F$ for which
$$\hom_0(F(S),P_n)\leq  \hom_0(T'(S),P_n).$$
Note that $u'$ and $v$ is not a leaf in $F$ anymore. Maybe, the original
neighbor of $u$ became a leaf in $F$, but still the number of leaves of $F$ is
strictly less than the number of leaves of $\overline{T}_m$. This contradicts
the choice of $\overline{T}_m$. Hence we are done in this case.

{\bf Case 2.} $S$ contains no leaf. 
Hence all leaves belong to
the class $L$. Let $u_1Pu_2$ be a longest path of $\overline{T}_m$. As in the
previous case, the unique neighbors $x_1$ and $x_2$ of $u_1$ and $u_2$,
respectively, have degree at least $3$. We can assume that $x_1\neq x_2$,
otherwise  $\overline{T}_m$ is a star and it contains a claw and we can do
claw-deletion, which strictly decreases $\hom_0(T(S),P_n)$. Since $x_1$ and
$x_2$ have degree at least $3$, and $u_1Pu_2$ were the longest path, the only
possible way it can occur that $x_1$ and $x_2$ have other neighbors $u_3$ and
$u_4$, respectively, which are leaves. Now let us delete $u_3$ and add a new
neighbor $u'_3$ to $x_2$. Let $T'$ be the obtained tree.  Then by
Lemma~\ref{leaf-move}
$$\hom_0(\overline{T}_m(S),P_n)=\hom_0(T'(S),P_n).$$
On the other hand, it is still true that all leaves of $T'$ belong to its
large class. Moreover, it contains a claw: $\{x_2,u_2,u'_3,u_4\}$. 
By Remark~\ref{leaf-rem} the conditions of Lemma~\ref{claw-deletion} are
satisfied, and we can do a claw-deletion. Then we get a tree $F$ for which
$$\hom_0(\overline{T}_m(S),P_n)=\hom_0(T'(S),P_n)>\hom_0(F(S),P_n).$$
This contradicts the choice of $\overline{T}_m$.

Hence we get  contradictions in all cases.
\end{proof}

\section{Open problems}

We collected a few open problems and conjectures in this section.

We first recall a conjecture from the Introduction, namely that there is no
exceptional case in Theorem~\ref{minimality-path} if $n\geq 5$.
\bigskip
 
\noindent \textbf{Conjecture~\ref{disappointment}} 
Let $T_n$ be a tree on $n$ vertices, where
  $n\geq 5$. Then for any tree $T_m$ we have
$$\hom(T_m,P_n)\leq \hom(T_m,T_n).$$
\bigskip

Note that to prove Conjecture~\ref{disappointment}, one only needs to prove
that for any tree $T_m$ we have
$$\hom(T_m,P_n)\leq \hom(T_m,Y_{1,1,n-3})$$
for $n\geq 6$, where $n$ is even.
\bigskip 

There is also an open problem in Figure~\ref{table1}.

\begin{?} Is it true that
$$\hom(P_n,T_n)\leq \hom(T_n,T_n)$$
for every tree $T_n$ on $n$ vertices?
\end{?}

We believe that the answer is affirmative for this question. This question
naturally leads to the following problem.

\begin{?} \label{hard problem} Characterize all graphs $G$ for which
$$\hom(P_m,G)\leq \hom(T_m,G)$$
for all $m$ and all trees $T_m$ on $m$ vertices.
\end{?}

Note that if $G$ is $d$-regular, then $\hom(P_m,G)=\hom(T_m,G)=|V(G)|d^{m-1}$.
We have also seen that the inequality of Problem~\ref{hard problem} is
satisfied if $G=P_n$ or $S_n$. Probably, it is hard to characterize these
graphs. Maybe, it is easier to describe those graphs $G$ for which the
inequality of Problem~\ref{hard problem} is satisfied for large enough $m$.

The dual of Problem~\ref{hard problem} is also natural:

\begin{?} \label{medium problem} Characterize all trees $T_m$ on $m$ vertices
for which 
$$\hom(P_m,G)\leq \hom(T_m,G)$$
for all graph $G$.
\end{?}

Probably, this is an easier problem than Problem~\ref{hard problem}. Note that
already Sidorenko \cite{si} achieved nice results on this problem. Still the
problem is far from being solved.
 
In light of the tree-walk algorithm, it would be interesting to develop an algorithm for computing the number of homomorphisms from bipartite graphs to any graph.

\section*{Acknowledgments}
The first author is very grateful to L\'aszl\'o Lov\'asz and Mikl\'os Simonovits for suggesting to study the papers of Alexander Sidorenko. He also thanks Benjamin Rossman for the comments on the paper 
\cite{br}.

This research was partly done while the second author was visiting the 
Alfr\'ed R\'enyi Institute of Mathematics. He is very grateful to Mikl\'os
Ab\'ert and the Institute for their hospitality and support. He also would like to thank Yanfeng Luo for introducing the graph homomorphisms and Jiang Zeng for all his encouragement during this work.

Special thanks goes to Masao Ishikawa for suggesting the name tree-walk algorithm.

\end{document}